\newtheorem*{Thm*}{Theorem}
\newtheorem{Thm}{Theorem}
\newtheorem{Cor}[Thm]{Corollary}
\newtheorem{Prop}[Thm]{Proposition}
\newtheorem{Lemma}[Thm]{Lemma}
\theoremstyle{definition}
\newtheorem{Defn}[Thm]{Definition}
\newtheorem{Remark}[Thm]{Remark}
\newtheorem{Example}[Thm]{Example}
\newcommand{\real}{\mathbb{R}}
\newcommand{\comp}{\mathbb{C}}
\newcommand{\mf}[1]{\mathbb{#1}}
\newcommand{\mc}[1]{\mathcal{#1}}
\newcommand{\mb}[1]{\mathbf{#1}}
\DeclareMathOperator{\supp}{\mathrm{supp}}
\DeclareMathOperator{\Imm}{\mathit{Im}}
\newcommand{\abs}[1]{\left\vert#1\right\vert}
\newcommand{\set}[1]{\left\{#1\right\}}
\renewcommand{\phi}{\varphi}
\newcommand{\eps}{\varepsilon}
\newcommand{\msubord}{\ \frame{$\ \rightthreetimes$}\; }
\newcommand{\calt}{\circlearrowright}
\newcommand{\utimes}{\kern0.05em\buildrel{\times}\over{\rule{0em}{0.004em}} \kern-0.9em\cup \kern0.2em}
\newcommand{\putimes}{\mathop{\kern0.05em\buildrel{\times}\over{\rule{0em}{0.0em}} \kern-0.9em\cup \kern0em}}
\newcommand{\hutimes}{\mathop{\kern0.02em\buildrel{\times}\over{\rule{0em}{0.0em}} \kern-0.48em\cup \kern0em}}
\newcommand{\sutimes}{\mathrel{\kern0em \buildrel{\mathsf{x}}\over{\rule{0em}{0.0em}} \kern-0.35em\cup \kern-0.0em}}
\title{The exponential map in non-commutative probability}
\author{Michael Anshelevich}
\thanks{The first author was supported in part by NSF grant DMS-1160849.}
\address{Department of Mathematics, Texas A\&M University, College Station, TX 77843-3368}
\email{manshel@math.tamu.edu}
\author{Octavio Arizmendi}
\address{Centro de Investigaci{\'o}n en Matem{\'a}ticas. Guanajuato, Mexico}
\email{octavius@cimat.mx}
\thanks{The second author was supported by CONACYT Grant 222668.}
\subjclass[2010]{Primary 46L54; Secondary 46L53}
\date{\today}
\begin{document}

\begin{abstract}
The wrapping transformation $W$ is a homomorphism from the semigroup of probability measures on the real line, with the convolution operation, to the semigroup of probability measures on the circle, with the multiplicative convolution operation. We show that on a large class $\mc{L}$ of measures, $W$ also transforms the three non-commutative convolutions---free, Boolean, and monotone---to their multiplicative counterparts. Moreover, the restriction of $W$ to $\mc{L}$ preserves various qualitative properties of measures and triangular arrays. We use these facts to give short proofs of numerous known, and new, results about multiplicative convolutions.
\end{abstract}


\maketitle

\section{Introduction}

In probability theory, the study of sums of random variables is closely related to the study of the convolution operation $\ast$ on $\mc{P}(\mf{R})$, the probability measures on the real line. Similarly, the study of products of random variables is closely related to the study of multiplicative convolution operations $\circledast$ on probability measures on the positive real line $\mf{R}_+$ or on the circle $\mf{T}$,
\[
\int_{\mf{R}_+ \text{ or } \mf{T}} f(z) \,d(\nu_1 \circledast \nu_2)(z) = \int_{\mf{R}_+ \text{ or } \mf{T}} \int_{\mf{R}_+ \text{ or } \mf{T}} f(z w) \,d\nu_1(z) \,d\nu_1(w).
\]
The measures on $\mf{R}_+$ arise from those on $\mf{R}$ by a simple change of variable $d\nu(e^x) = d\mu(x)$, so that
\begin{equation}
\label{Exp-line}
\int_{\mf{R}} f(e^x) \,d\mu(x) = \int_{\mf{R}_+} f(y) \,d\nu(y).
\end{equation}
Its counterpart for the circle is what we will call the wrapping map.

\begin{Defn}
\label{Defn:Wrapping}
The wrapping map $W : \mc{P}(\mf{R}) \rightarrow \mc{P}(\mf{T})$ is
\begin{equation}
\label{Wrapping}
d(W(\mu))(e^{-ix}) = \sum_{n \in \mf{Z}} d\mu(x + 2 \pi n).
\end{equation}
Note that the map is clearly well defined, and that the measure $\mu$ gets wrapped clockwise, to fit better with the later results in the article. It has the property that
\begin{equation}
\label{Exp:circle}
\int_{\mf{R}} f(e^{-ix}) \,d\mu(x) = \int_{\mf{T}} f(\zeta) \,d(W(\mu))(\zeta).
\end{equation}
\end{Defn}
It is easy to see from \eqref{Exp-line} and \eqref{Exp:circle} that in both cases, these maps transform the additive convolution $\ast$ into the multiplicative convolution $\circledast$. So the study of products of random variables can largely be reduced to the study of sums of such variables, by taking the logarithm. See e.g.\ Chapters I.10 and XIX.5 of \cite{Feller-volume2}, \cite{Schatte-Prob-mod-2pi} and other sources, and \cite{Grenander} for a more general framework.

This is not the case in probability theories dealing with non-commuting variables, for which logarithm does \emph{not} linearize the product. Free probability is the most important such theory, but Boolean and monotone probability theories have also been studied. In these three cases, one can define additive convolutions of measures on the real line, denoted by $\boxplus$, $\uplus$, and $\rhd$, respectively, and corresponding to the addition of (appropriately) independent self-adjoint operators. Multiplicative convolutions, denoted by $\boxtimes$, $\putimes$, and $\circlearrowleft$, respectively, can be again defined for measures on $\mf{R}_+$, corresponding to multiplication of (appropriately) independent positive operators, and for measures on $\mf{T}$, corresponding to the multiplication of unitary operators. In all three cases, multiplicative theories exhibit strong parallels with the additive theories, through various L{\'e}vy-Khinchin-type formulas, limit theorems etc. However, it was already noted in \cite{BV93} that in the case of the positive real line, the behavior of $\boxtimes$ is different from that of $\boxplus$, and so cannot be reduced to it. While such results were not known for the circle, the theorems in that case were proved separately, see \cite{Belinschi-Atoms-mult,Bel-Ber-Partially-mult,Bel-Ber-Hincin,Bercovici-Wang-Multiplicative,Chist-Gotze-Arithmetic,Ariz-Hasebe-Semigroups,Zhong-Free-BM} for a partial list. Only a posteriori did they turn out to be similar to those on the real line.

Still, as indicated for example in Section 4 of \cite{Ariz-Hasebe-Semigroups}, a hope for some replacement for a logarithm lingers, and several successful definitions have been given, in the algebraic setting (for general functionals rather than measures). In \cite{Nica-Mastnak}, the authors introduced a transformation based on character theory, which linearizes the multiplicative free convolution $\boxtimes$, in certain cases even in a multivariate setting. In \cite{Friedrich-McKay-Witt,Friedrich-McKay-Almost} the authors constructed another relation between additive and multiplicative instances of the free theory, again on the level of power series. The third approach due to C{\'e}bron \cite{Cebron-Multiplicative}, which is most closely related to ours, is described in Remark~\ref{Remark:Cebron}.

In this paper, we show that the wrapping map can, after all, be used to relate the free convolutions on $\mf{R}$ and $\mf{T}$. We do this by re-casting it as a very simple (exponential) transformation between analytic-function transforms. It is easy to see (Example~\ref{Example:Counter}) that $W$ is not a homomorphism between $(\mc{P}(\mf{R}), \boxplus)$ and $(\mc{P}(\mf{T}), \boxtimes)$. However, we show that it is a homomorphism from a certain class of measures $\mc{L}$, defined below, to $\mc{ID}^{\sutimes}_\ast$, the space of all probability measures on $\mf{T}$ other then the Lebesgue measure, infinitely divisible with respect to Boolean convolution. Our approach has two new features. First, it is analytic rather than algebraic. In fact, measures in $\mc{L}$ do not have finite moments, and all lie in the domain of attraction of the Cauchy law. Second, the same transformation $W$ is a homomorphism not just between free convolutions, but between Boolean and monotone additive and multiplicative convolutions as well.  In particular, we show that $\mc{ID}^{\sutimes}_\ast$, taken with any of the three multiplicative convolutions, is isomorphic (modulo a simple equivalence relation) to $\mc{L}$ taken with the corresponding additive convolution. This is of interest since the three non-commutative probability theories are actually related, and our techniques allow us to handle multiplicative versions of objects involving more than one convolution, such as the Belinschi-Nica transformations or subordination distributions. In addition, when restricted to $\mc{L}$, the wrapping map intertwines the additive Bercovici-Pata bijections (including the classical one) with their multiplicative counterparts.

The wrapping map preserves numerous properties of the measure, such as having finitely many atoms, and maps infinitesimal triangular arrays to infinitesimal triangular arrays (recall that roughly, measures in an infinitesimal array on $\mf{R}$, respectively $\mf{T}$, concentrate around $0$, respectively, $1$, and see Lemma~\ref{Lemma:Infinitesimal} for a precise definition). Clearly the converses fail: under application of $W$, several atoms may coalesce, and a pre-image of an infinitesimal array is only guaranteed to concentrate in the neighborhoods of multiples of $2 \pi$. Nevertheless, when restricted to the map $W: \mc{L} \rightarrow \mc{ID}^{\sutimes}_\ast$, the unwrapping map preserves such properties as well. When so restricted, $W$ and $W^{-1}$
\begin{itemize}
\item
Preserve the number and weights of the atoms.
\item
Preserve the property of being absolutely continuous with a strictly positive density.
\item
Map infinitesimal triangular arrays to infinitesimal triangular arrays.
\end{itemize}
This allows us to carry over results about qualitative properties of free convolution and limit theorems from the additive to the multiplicative case.

The paper is organized as follows. Section~\ref{Section:Background} contains the background material. In Section~\ref{Section:Class}, we describe and study the class $\mc{L}$. In particular, we provide the first examples of measures with connected support none of whose free convolution powers are unimodal. In Section~\ref{Section:Transformation}, we give an alternative definition for the wrapping map $W$, restricted to $\mc{L}$, based on complex-analytic transforms, and study its properties. In Section~\ref{Section:Applications}, we list various applications. New results include various properties of the multiplicative Belinschi-Nica transformations and the divisibility indicator, the multiplicative analog of the inviscid Burgers' equation and the general subordination evolution, several identities, and multiplicative versions of various results of the second author. In addition, we provide numerous simpler proofs of known multiplicative results (typically restricted to $\mc{ID}^{\sutimes}_\ast$), including limit theorems and qualitative properties of free convolution.

\textbf{Acknowledgements.} We would like to thank Takahiro Hasebe for useful discussions, Guillaume C{\'e}bron for reminding us about his work \cite{Cebron-Multiplicative}, and Yoshimichi Ueda for pointing out the connection with the Loewner's equation.  We are grateful to Uwe Franz and Takahiro Hasebe for allowing us to include Lemma \ref{uni2}. Finally, we would like to thank the referees for helpful comments.

\section{Background}
\label{Section:Background}

\subsection{Transforms for additive convolutions}

For $\mu$ a probability measure on $\mf{R}$, we define the Cauchy transform
\[
G_\mu(z) = \int_{\mf{R}} \frac{1}{z - x} \,d\mu(x).
\]
and the $F$-transform $F_\mu : \mf{C}^+ \rightarrow \mf{C}^+$,
\[
F(z) = \frac{1}{G(z)}.
\]
$\mu$ can be recovered from these transforms by taking boundary values
\[
d\mu(x)
= - \frac{1}{\pi} \lim_{y \downarrow 0} \Imm G_\mu(x + i y) \,dx
= - \frac{1}{\pi} \lim_{y \downarrow 0} \Imm \frac{1}{F_\mu(x + i y)} \,dx.
\]
$F$ is invertible on a Stolz angle, and the Voiculescu transform of $\mu$ is
\[
\phi_\mu(z) = F_\mu^{-1}(z) - z.
\]
Using these transforms, given  probability measures $\mu$ and $\nu$ on $\mf{R}$, we may define their free, Boolean and monotone additive convolution operations (in symbols $\mu \boxplus \nu$, $\mu \uplus \nu$ and $\mu \rhd \nu$) via
\begin{equation*}
\phi_{\mu \boxplus \nu}(z) = \phi_{\mu}(z) + \phi_{\nu}(z), \quad (F_{\mu \uplus \nu}(z) - z) = (F_\mu(z) - z) + (F_\nu(z) - z), \quad F_{\mu \rhd \nu}(z) = F_\mu(F_\nu(z)).
\end{equation*}
We may also define additive convolution powers via
\[
\phi_{\mu^{\boxplus t}}(z) = t \phi_\mu(z), \quad (F_{\mu^{\uplus t}}(z) - z) = t (F_\mu(z) - z).
\]
$\mu^{\boxplus t}$ is defined at least for $t \geq 1$, and $\mu^{\uplus t}$ for all $t \geq 0$.

The Bercovici-Pata bijections between the classes of distributions infinitely divisible with respect to each of the four additive convolution operations are the bijections between each of $\mc{ID}^\ast$, $\mc{ID}^\boxplus$, $\mc{ID}^\uplus$, and $\mc{ID}^\rhd$, and pairs $(\alpha, \tau)$, for $\alpha \in \mf{R}$ and $\tau$ a finite Borel measure on $\mf{R}$. The bijections are given by
\begin{equation}
\label{LH-add}
\mc{F}_{\nu_\ast^{\alpha, \tau}}(t) = \exp \left( i \alpha t + \int_{\mf{R}} (e^{i x t} - 1 - \frac{i x t}{1 + x^2}) \frac{x^2 + 1}{x^2} \,d\tau(x) \right), \quad t \in \mf{R},
\end{equation}
\[
F_{\nu_{\uplus}^{\alpha, \tau}}(z) =  z  - \alpha + \int_{\mf{R}} \frac{1 + x z}{x-z} \,d\tau(x), \quad z \in \mf{C},
\]
\[
\phi_{\nu_{\boxplus}^{\alpha, \tau}}(z) =  \alpha + \int_{\mf{R}} \frac{1 + x z}{z-x} \,d\tau(x), \quad z \in \mf{C},
\]
\begin{equation}
\label{LH-Monotone}
\Phi(z) =  - \alpha + \int_{\mf{R}} \frac{1 + x z}{x-z} \,d\tau(x), \quad \frac{\partial F_t(z)}{\partial t} = \Phi(F_t(z)), \quad F_{\nu_\rhd^{\alpha, \tau}} = F_1.
\end{equation}

The additive subordination distribution $\mu \boxright \nu$ is defined by the analytic continuation of
\[
F_{\mu \boxright \nu} = F_\nu^{-1} \circ F_{\mu \boxplus \nu}.
\]
The Belinschi-Nica transformations \cite{Belinschi-Nica-B_t} are maps $\mf{B}_t: \mc{P}(\mf{R}) \rightarrow \mc{P}(\mf{R})$, $t \geq 0$, defined by
\[
\mf{B}_t(\mu) = \left( \mu^{\boxplus (1+t)} \right)^{\uplus \frac{1}{1+t}}.
\]

\subsection{Transforms for multiplicative convolutions}

For $\mu$ a probability measure on the unit circle $\mf{T}$, we define the transforms
\[
\psi_\mu(z) = \int_{\mf{T}} \frac{z \zeta}{1 - z \zeta} \,d\mu(\zeta), \quad \eta_\mu(z) = \frac{\psi_\mu(z)}{1 + \psi_{\mu}(z)}.
\]
Note that $\eta_\mu : \mf{D} \rightarrow \mf{D}$ and $\eta_\mu(0) = 0$, so $\abs{\eta_\mu(z)} \leq \abs{z}$ for $z \in \mf{D}$, which implies that $\eta_\mu(z)/z$  also maps $\mf{D}$ to itself. 

The measure can be recovered from the transforms by taking boundary values
\[
d\mu(e^{ix})
= \frac{1}{2\pi} \lim_{r \uparrow 1} (1 + 2 \Re \psi_\mu(r e^{-ix})) \,dx
= \frac{1}{2\pi} \lim_{r \uparrow 1} \Re \frac{1 + \eta_\mu(r e^{-ix})}{1 - \eta_\mu(r e^{-ix})} \,dx.
\]
Throughout the paper, we will restrict our analysis to the measures with $\eta'(0) \neq 0$. In this case we may also define
\[
\Sigma_{\mu}(z) = \frac{\eta_{\mu}^{-1}(z)}{z}
\]
on a neighborhood of zero. Using these transforms, given  probability measures $\mu$ and $\nu$ on $\mf{T}$, we may define their free, Boolean and monotone multiplicative convolution operations (in symbols $\mu \boxtimes \nu$, $\mu \utimes \nu$ and $\mu \circlearrowright \nu$) via
\begin{equation}
\label{Convolutions}
\Sigma_{\mu \boxtimes \nu}(z) = \Sigma_{\mu}(z) \Sigma_{\nu}(z), \quad \frac{\eta_{\mu \sutimes \nu}(z)}{z} = \frac{\eta_{\mu}(z)}{z} \frac{\eta_{\nu}(z)}{z}, \quad \eta_{\mu \circlearrowright \nu}(z) = \eta_{\mu} \circ \eta_{\nu}(z).
\end{equation}
We may also define multiplicative convolution powers (see \cite{Bel-Ber-Partially-mult,Franz-Boolean-circle}), via
\[
\Sigma_{\nu^{\boxtimes t}}(z) = \left(\Sigma_\nu(z)\right)^t, \quad \frac{\eta_{\nu^{\sutimes t}}(z)}{z} = \left(\frac{\eta_\nu(z)}{z}\right)^t.
\]
Note that these transforms are in general well-defined only up to a factor of $e^{2 \pi i n t}$, and so the convolution powers are multi-valued.

We denote by $\mc{ID}^{\sutimes}$ the measures infinitely divisible with respect to $\utimes$, and
\[
\mc{ID}^{\sutimes}_\ast = \mc{ID}^{\sutimes} \setminus \set{\text{Lebesgue measure}}.
\]
Recall from \cite{Franz-Boolean-circle} that
\begin{equation}
\label{Multi-Boolean}
\eta_{\mc{ID}^{\sutimes}_\ast} = \set{\eta_\mu \ |\ \mu \in \mc{ID}^{\sutimes}_\ast}
= \set{\eta: \mf{D} \rightarrow \mf{D} \text{ analytic} \ |\ \eta'(0) \neq 0, \text{ and } \eta(z) = 0 \Leftrightarrow z = 0}
\end{equation}
(for Lebesgue measure on the circle, $\eta(z) = 0$). $\mc{ID}^{\boxtimes}_\ast$ and $\mc{ID}^{\circlearrowright}_\ast$ are measures infinitely divisible with respect to $\boxtimes$, respectively, $\circlearrowright$, again excluding the Lebesgue measures. According to \cite{Wang-Boolean}, there are (Bercovici-Pata) bijections between the classes $\mc{ID}^{\boxtimes}_\ast$ and $\mc{ID}^{\sutimes}_\ast$, and pairs $(\gamma, \sigma)$, for $\gamma \in \mf{T}$ and $\sigma$ a finite Borel measure on $\mf{T}$. The bijections are given by
\begin{equation}
\label{Sigma-transform}
\Sigma_{\nu_\boxtimes^{\gamma, \sigma}}(z) = \gamma \exp \left( \int_{\mf{T}} \frac{1 + \zeta z}{1 - \zeta z} \,d\sigma(\zeta) \right), \quad z \in \mf{D},
\end{equation}
\[
\eta_{\nu_{\sutimes}^{\gamma, \sigma}}(z) = \gamma z \exp \left( - \int_{\mf{T}} \frac{1 + \zeta z}{1 - \zeta z} \,d\sigma(\zeta) \right), \quad z \in \mf{D}.
\]
To each such pair also corresponds an element of $\mc{ID}^{\circledast}_\ast$, via
\begin{equation}
\label{LH-Mult}
\mc{F}_{\nu_\circledast^{\gamma, \sigma}}(p) = \gamma^p \exp \left( \int_{\mf{T}} \frac{\zeta^p - 1 - i p \Imm \zeta}{1 - \Re \zeta} \,d\sigma(\zeta) \right), \quad p \in \mf{Z},
\end{equation}
but this correspondence is neither injective nor onto $\mc{ID}^{\circledast}_\ast$. In addition \cite{Bercovici-Multiplicative-monotonic,Ans-Williams-Chernoff}, the elements of $\mc{ID}^{\circlearrowright}_\ast$ other than delta-measures are in a bijection with pairs $(\beta, \sigma)$ for $\beta \in \mf{R}$ and $\sigma$ a non-zero, finite measure on $\mf{T}$, through
\begin{equation}
\label{Monotone-mult-generator}
\begin{split}
& A^{\beta, \sigma}(z) = z \left(- i \beta - \int_{\mf{T}} \frac{1 + \zeta z}{1 - \zeta z} \,d\sigma(\zeta) \right), \\
& \frac{d \eta_{\mu_t}(z)}{d t} = A^{\beta, \sigma}(\eta_{\mu_t}(z)), \quad \eta_{\mu_0}(z) = z, \quad \nu_\circlearrowright^{\beta, \sigma} = \mu_1
\end{split}
\end{equation}
(with the delta-measures corresponding to $A^{\beta, 0}$, $0 \leq \beta < 2 \pi$). The multiplicative subordination distribution $\mu \msubord \nu$ is defined by the analytic continuation of
\[
\eta_{\mu \msubord \nu} = \eta_\nu^{-1} \circ \eta_{\mu \boxtimes \nu}.
\]

\begin{Lemma}
\label{Lemma:Mult-ID}
$\mc{ID}^\circlearrowright_\ast \subset \mc{ID}^{\sutimes}_\ast$ and $\mc{ID}^\boxtimes_\ast \subset \mc{ID}^{\sutimes}_\ast$.
\end{Lemma}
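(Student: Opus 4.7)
The plan is to invoke the characterization \eqref{Multi-Boolean}: a probability measure $\mu$ on $\mf{T}$ belongs to $\mc{ID}^{\sutimes}_\ast$ exactly when $\eta_\mu$ is analytic on $\mf{D}$, $\eta_\mu'(0) \neq 0$, and $\eta_\mu(z) = 0$ only at $z = 0$. Analyticity on $\mf{D}$ is automatic for every probability measure on $\mf{T}$, so for each of the two inclusions only the two algebraic conditions need to be checked.

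For $\mc{ID}^\boxtimes_\ast \subset \mc{ID}^{\sutimes}_\ast$, I would start from the L\'evy--Khinchin representation \eqref{Sigma-transform}, which exhibits $\Sigma_\mu(z)$ as $\gamma \exp(h(z))$ with $h(z) = \int_{\mf{T}} \frac{1 + \zeta z}{1 - \zeta z}\, d\sigma(\zeta)$ analytic on $\mf{D}$. In particular $\Sigma_\mu$ extends analytically and zero-freely to the whole disk, and $\Sigma_\mu(0) = \gamma e^{\sigma(\mf{T})} \neq 0$. The defining relation $\eta_\mu^{-1}(z) = z \Sigma_\mu(z)$, valid on a neighborhood of $0$, rearranges to the functional equation
\[
\eta_\mu(w)\, \Sigma_\mu(\eta_\mu(w)) = w
\]
near $0$. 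Since $\eta_\mu : \mf{D} \to \mf{D}$ and $\Sigma_\mu$ is analytic on $\mf{D}$, both sides are globally analytic on $\mf{D}$, and the identity theorem extends the equation to all of $\mf{D}$. The nonvanishing of $\Sigma_\mu$ then forces $\eta_\mu(w) = 0$ if and only if $w = 0$, and differentiating at $w = 0$ yields $\eta_\mu'(0) = 1/\Sigma_\mu(0) \neq 0$.

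For $\mc{ID}^\circlearrowright_\ast \subset \mc{ID}^{\sutimes}_\ast$, the delta-measure case is trivial since $\eta_{\delta_\gamma}(z) = \gamma z$ obviously satisfies both conditions. For the remaining elements I would apply the ODE description \eqref{Monotone-mult-generator}. The generator $A^{\beta, \sigma}$ is analytic on $\mf{D}$ with $A^{\beta, \sigma}(0) = 0$, so $0$ is an equilibrium of the autonomous scalar flow $\dot w = A^{\beta, \sigma}(w)$. For any $z \in \mf{D} \setminus \{0\}$, the trajectory $w(t) = \eta_{\mu_t}(z)$ begins at $z \neq 0$, and uniqueness of ODE solutions prevents it from reaching the equilibrium $0$ in finite time; setting $t = 1$ gives $\eta_{\mu_1}(z) \neq 0$. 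Differentiating the ODE in $z$ at $z = 0$ (using $\eta_{\mu_t}(0) = 0$) reduces to the linear scalar equation $\dot a = (A^{\beta, \sigma})'(0)\, a$ with $a(0) = 1$, whose solution $\exp((A^{\beta, \sigma})'(0))$ is nonzero.

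The only substantive step is the identity-theorem propagation in the free case, which upgrades the a priori local functional equation to a global one on $\mf{D}$; the remaining pieces are routine.
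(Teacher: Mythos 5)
Your proof is correct. The two halves relate to the paper's proof differently. For the monotone inclusion you follow essentially the same route as the paper: the only zero of the generator $A^{\beta,\sigma}$ is at the origin, so by uniqueness for the flow a trajectory starting at $z\neq 0$ cannot reach the equilibrium $0$ in finite time (the paper phrases this as forward uniqueness from $t_0$ plus analyticity in $t$, you phrase it as backward uniqueness at the equilibrium; both are valid since $A^{\beta,\sigma}$ is locally Lipschitz on $\mf{D}$), and the value $\eta_{\mu_t}'(0)=e^{tA'(0)}\neq 0$ is obtained by linearizing at $0$, which the paper instead cites from the literature. For the free inclusion the paper simply invokes Proposition~3.3 of Belinschi--Bercovici, whereas you give a self-contained argument: the L\'evy--Khinchin form of $\Sigma_\mu$ makes it analytic and zero-free on all of $\mf{D}$, the local relation $\eta_\mu(w)\,\Sigma_\mu(\eta_\mu(w))=w$ propagates to $\mf{D}$ by the identity theorem (both sides being globally analytic because $\eta_\mu(\mf{D})\subset\mf{D}$), and then $\eta_\mu(w)=0\Rightarrow w=0$ and $\eta_\mu'(0)=1/\Sigma_\mu(0)\neq 0$ follow at once. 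This buys independence from the cited result at the cost of a few extra lines, and it is the correct mechanism; in effect you have reproved the needed special case of the citation. No gaps.
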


\begin{proof}
Let $\set{\mu_t : t \geq 0}$ be a $\circlearrowright$-semigroup with generator $A^{\beta, \sigma}$. First, for example from Lemma~5.3 in \cite{Ans-Williams-Chernoff}, $\eta_{\mu_t}'(0) = e^{-i \beta t} e^{-t \sigma(\mf{T})} \neq 0$. Next, suppose that for some $t_0 > 0$ and $z \neq 0$, $\eta_{\mu_{t_0}}(z) = 0$. Since $A^{\beta, \sigma}$ is only zero at zero, the semigroup is the unique solution of equation \eqref{Monotone-mult-generator}, with $A^{\beta, \sigma}(0) = 0$. It follows that $\eta_{\mu_t}(z) = 0$ for $t \geq t_0$. Since $\eta$ depends analytically on $t$ \cite{Berkson-Porta}, the same holds for all $t > 0$. On the other hand, $\eta_{\mu_0}(z) = z$. We obtain a contradiction.

The second statement follows from Proposition~3.3 in \cite{Bel-Ber-Partially-mult}.
\end{proof}

\section{The class $\mc{L}$}
\label{Section:Class}

\begin{Defn}
Denote
\[
F_{\mc{L}} = \set{F: \mf{C}^+ \rightarrow \mf{C}^+ \text{ analytic} \ |\ F(z + 2 \pi) = F(z) + 2 \pi}.
\]
and
\[
\mc{L} = \set{\mu \in \mc{P}(\mf{R}) \ |\ F_\mu(z) \in F_{\mc{L}}}.
\]
\end{Defn}

\begin{Lemma}
\label{Lemma:Conditions}
An analytic function $F : \mf{C}^+ \rightarrow \mf{C}^+$ is in $F_{\mc{L}}$ if and only if
\[
F(z) = z + f(e^{iz})
\]
for  some analytic transformation $f: \mf{D} \rightarrow \mf{C}^+$.
\end{Lemma}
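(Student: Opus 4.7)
For the easy direction $(\Leftarrow)$, I would verify directly: given $f : \mf{D} \to \mf{C}^+$ analytic, setting $F(z) = z + f(e^{iz})$, the inequality $\abs{e^{iz}} = e^{-\Imm z} < 1$ puts $e^{iz}$ in $\mf{D}$ for $z \in \mf{C}^+$, making $F$ analytic there; then $\Imm F(z) = \Imm z + \Imm f(e^{iz}) > 0$ is immediate, and the periodicity $F(z + 2\pi) = F(z) + 2\pi$ follows from $e^{i(z + 2\pi)} = e^{iz}$.

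For the converse $(\Rightarrow)$, starting from $F \in F_{\mc{L}}$, put $g(z) = F(z) - z$, which is $2\pi$-periodic by the defining property. Since $z \mapsto e^{iz}$ is a holomorphic covering of $\mf{D} \setminus \set{0}$ by $\mf{C}^+$ with deck group generated by $z \mapsto z + 2\pi$, $g$ descends to a unique analytic function $f : \mf{D} \setminus \set{0} \to \mf{C}$ with $g(z) = f(e^{iz})$. Two things remain: extending $f$ analytically across $0$, and showing that $f$ takes values in $\mf{C}^+$.

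The decisive step is to upgrade the hypothesis ``$F$ maps $\mf{C}^+$ into $\mf{C}^+$'' to the Nevanlinna-type bound $\Imm F(z) \geq \Imm z$. I would apply the Schwarz-Pick lemma to $F$ at the pair $z, z + 2\pi$; since $F(z + 2\pi) = F(z) + 2\pi$, contraction of hyperbolic distance gives
\[
d_{\mathrm{hyp}}(F(z), F(z) + 2\pi) \leq d_{\mathrm{hyp}}(z, z + 2\pi).
\]
Because $d_{\mathrm{hyp}}(w, w + 2\pi)$ depends only on $\Imm w$ and is strictly decreasing in it, this forces $\Imm F(z) \geq \Imm z$, whence $\Imm g \geq 0$ on $\mf{C}^+$ and $\Imm f \geq 0$ on $\mf{D} \setminus \set{0}$.

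To finish, I would compose $f$ with a Cayley transform sending $\overline{\mf{C}^+}$ into $\overline{\mf{D}}$ to obtain a bounded analytic function on $\mf{D} \setminus \set{0}$; Riemann's removable-singularity theorem then extends it, and hence $f$, across $0$ to all of $\mf{D}$. The open mapping theorem then forces the extension to be either a real constant (the degenerate case $F(z) = z + c$, $c \in \mf{R}$) or to map $\mf{D}$ into $\mf{C}^+$, as claimed. The main obstacle is the Schwarz-Pick step: without invoking the periodicity, the bare hypothesis $F(\mf{C}^+) \subseteq \mf{C}^+$ is too weak to yield $\Imm F \geq \Imm z$, so the whole argument turns on feeding the periodicity into the Schwarz-Pick contraction.
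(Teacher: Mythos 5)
Your proof is correct, and its skeleton (write $F(z)-z$ as $f(e^{iz})$ via the covering $z \mapsto e^{iz}$, show $\Imm f \geq 0$, then remove the singularity at $0$) matches the paper's. Where you genuinely diverge is in how the two nontrivial sub-steps are handled, and in both cases your version is more self-contained. First, the paper simply asserts that ``it is well-known that $\Imm F \geq \Imm z$''; as you correctly observe, this is false for a general analytic self-map of $\mf{C}^+$ and must be extracted from the periodicity. Your Schwarz--Pick argument does exactly that: since $\cosh d_{\mathrm{hyp}}(w, w+2\pi) = 1 + 2\pi^2/(\Imm w)^2$ is strictly decreasing in $\Imm w$, the contraction $d_{\mathrm{hyp}}(F(z), F(z)+2\pi) \leq d_{\mathrm{hyp}}(z, z+2\pi)$ forces $\Imm F(z) \geq \Imm z$. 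This supplies a proof of a step the paper leaves implicit. Second, for the removable singularity at $0$, the paper argues by exclusion --- a pole would make $f$ surjective onto a neighborhood of $\infty$, and an essential singularity is ruled out by Picard --- whereas you compose with a Cayley transform and invoke Riemann's theorem on bounded functions; your route avoids Picard entirely and is the more elementary of the two. One small point in your favor: you explicitly flag the degenerate case $F(z) = z + c$ with $c \in \mf{R}$ (e.g.\ $F_{\delta_{-c}}$), where $f$ is a real constant and lands in $\overline{\mf{C}^+}$ rather than $\mf{C}^+$; the paper's statement and proof silently absorb this case (as does its Lemma~\ref{Cara}, which allows $\sigma = 0$), so this is an imprecision of the statement rather than a defect of either argument.
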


\begin{proof}
Clearly any function of this form is in $F_{\mc{L}}$. Conversely, if $F \in F_{\mc{L}}$, then $F(z) - z$ is $2 \pi$-periodic, and so $F(z) = z + f(e^{iz})$ for some function $f$. Moreover, it is well-known that $\Imm F\geq \Imm z$ and $F(z)-z$ is an analytic function from $\mf{C}^+$ to $\mf{C}^+$, so $f$ maps $\mf{D}$ to $\mf{C}^+$ and is analytic in the punctured disk. Since $f$ avoids the lower half plane, there are infinitely many numbers in a neighborhood of infinity which are not in the range of $f$. This implies that the singularity at zero cannot be a pole (since $f(z)$ would look like like $1/z^n$ near $0$), or an essential singularity (by Picard's theorem).  It follows that the singularity of $f$ at $0$ is removable.
\end{proof}

\begin{Lemma}\label{Cara}
There is a bijection between $F_{\mc{L}}$ (or, equivalently, $\mc{L}$) and the set of pairs
\[
\set{(\beta, \sigma) : \beta \in \mf{R}, \sigma \text{ a finite measure on } \mf{T}},
\]
given by
\[
F(z) = z - \beta + i \int_{\mf{T}} \frac{1 + \zeta e^{i z}}{1 - \zeta e^{i z}} \,d\sigma(\zeta)
\]
There is a bijection between $\set{\eta_\mu \ |\ \mu \in \mc{ID}^{\sutimes}_\ast}$ (or, equivalently, $\mc{ID}^{\sutimes}_\ast$) and the set of pairs
\[
\set{(\gamma, \sigma) : \gamma \in \mf{T}, \sigma \text{ a finite measure on } \mf{T}},
\]
given by
\[
\eta(z) = z \gamma \exp \left( - \int_{\mf{T}} \frac{1 + \zeta z}{1 - \zeta z} \,d\sigma(\zeta) \right).
\]
\end{Lemma}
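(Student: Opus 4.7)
The plan is to deduce both bijections from the Riesz--Herglotz representation theorem: every analytic $h : \mf{D} \to \set{z \in \mf{C} : \Re z \geq 0}$ admits a unique representation
\[
h(w) = \int_{\mf{T}} \frac{1 + \zeta w}{1 - \zeta w} \,d\sigma(\zeta) + ic,
\]
with $\sigma$ a finite positive Borel measure on $\mf{T}$ and $c = \Im h(0) \in \mf{R}$ (this is the standard $\frac{\zeta+w}{\zeta-w}$ form after the change of variable $\zeta \mapsto \bar{\zeta}$ on $\mf{T}$).

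For the first statement, I would combine this with Lemma \ref{Lemma:Conditions}, which writes every $F \in F_{\mc{L}}$ as $F(z) = z + f(e^{iz})$ for some analytic $f$ on $\mf{D}$ with $\Im f \geq 0$ (the non-strict inequality is needed to accommodate $F(z) = z - \beta$). The rotated function $h(w) := -i f(w)$ then has $\Re h \geq 0$, and applying Herglotz and rearranging yields
\[
f(w) = -\beta + i \int_{\mf{T}} \frac{1 + \zeta w}{1 - \zeta w} \,d\sigma(\zeta),
\]
with $\beta = -\Re f(0) \in \mf{R}$. Substituting $w = e^{iz}$ gives the displayed formula. Conversely, starting from a pair $(\beta, \sigma)$, the Poisson identity $\Re \frac{1 + \zeta w}{1 - \zeta w} = \frac{1 - |w|^2}{|1 - \zeta w|^2}$ shows $\Im f(e^{iz}) \geq 0$ on $\mf{C}^+$, and $2\pi$-periodicity of $F(z) - z$ in $z$ is automatic, so the formula defines an element of $F_{\mc{L}}$. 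Uniqueness of $(\beta, \sigma)$ is inherited from the uniqueness of the Herglotz data.

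For the second statement, I would use the characterization \eqref{Multi-Boolean}: $\eta = \eta_\mu$ for $\mu \in \mc{ID}^{\sutimes}_\ast$ means $\eta : \mf{D} \to \mf{D}$ is analytic, $\eta'(0) \neq 0$, and $\eta$ vanishes only at $0$. Thus $\eta(z)/z$ is analytic and non-vanishing on the simply connected disk, and so $u(z) := -\log(\eta(z)/z)$ is a well-defined analytic function on $\mf{D}$. The Schwarz lemma (applied to $\eta$, which maps $\mf{D} \to \mf{D}$ with $\eta(0) = 0$) gives $|\eta(z)/z| \leq 1$, hence $\Re u(z) \geq 0$. Applying Herglotz to $u$, then exponentiating and setting $\gamma := e^{-ic} \in \mf{T}$, yields the stated formula. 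Conversely, given $(\gamma, \sigma)$, the Poisson identity shows $|\eta(z)/z| \leq 1$, while $\eta'(0) = \gamma e^{-\sigma(\mf{T})} \neq 0$ and $\eta$ has no other zeros (the exponential is never $0$), placing $\eta$ in \eqref{Multi-Boolean}.

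The only delicate point is justifying that $\eta(z)/z$ admits a global analytic logarithm on $\mf{D}$; this is precisely where the hypothesis $\eta(z) = 0 \Leftrightarrow z = 0$ from \eqref{Multi-Boolean}, together with the simple connectedness of $\mf{D}$, is essential. Beyond that, both halves of the lemma are direct bookkeeping exercises with the Herglotz representation, the only difference being that the real constant $\beta$ from the additive setting becomes, after exponentiation, a unimodular constant $\gamma$ in the multiplicative setting.
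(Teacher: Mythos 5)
Your proof is correct. The first half is exactly the paper's argument: combine Lemma~\ref{Lemma:Conditions} with the Carath\'eodory--Herglotz representation of analytic maps from $\mf{D}$ into the (closed) upper half-plane, and your explicit remark that one must allow $\Imm f \geq 0$ rather than $\Imm f > 0$ in order to capture the degenerate case $F(z) = z - \beta$, $\sigma = 0$ (i.e.\ the point masses $\delta_\beta \in \mc{L}$) is a detail the paper glosses over. For the second half the paper does essentially no work: it observes that the displayed formula is the already-quoted description of $\eta_{\nu_{\sutimes}^{\gamma,\sigma}}$ from Wang's Boolean L\'evy--Khinchin theorem, so the bijection is simply inherited from that cited result. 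You instead rederive it from the characterization \eqref{Multi-Boolean}: since $\eta(z)/z$ is non-vanishing on the simply connected disk it admits an analytic logarithm, the Schwarz lemma gives $\abs{\eta(z)/z} \leq 1$ so that $-\log(\eta(z)/z)$ has non-negative real part, and Herglotz applies once more. This is a sound, self-contained proof of the fact the paper imports, and it makes transparent why the additive real constant $\beta$ becomes a unimodular constant $\gamma$ after exponentiation: the branch of the logarithm, hence the imaginary part $c$ of the Herglotz constant, is determined only modulo $2\pi$, which is exactly absorbed by $\gamma = e^{-ic} \in \mf{T}$. Both routes rest on the same Herglotz kernel, so nothing essential is gained or lost, but yours is the more complete argument.
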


\begin{proof}
By a simple modification of the Carath{\'e}odory representation (Chapter 3 of \cite{Akh65}), an analytic function $f: \mf{D} \rightarrow \mf{C}^+$ can be written as
\[
f(z) = - \beta + i \int_{\mf{T}} \frac{1 + \zeta z}{1 - \zeta z} \,d\sigma(\zeta).
\]
The first result follows. The second correspondence is simply the description of $\eta_{\nu_{\sutimes}^{\gamma, \sigma}}(z)$.
\end{proof}

\begin{Prop}
\label{Prop:Atoms}
Let $\sigma$ be a measure on $\mf{T}$ with finite support. Then the measure in $\mc{L}$ corresponding to the pair $(\beta, \sigma)$ in the Lemma~\ref{Cara} is purely atomic, with countably many atoms at the solutions of the equation
\begin{equation}
\label{Atoms}
x - \beta = \int_{-\pi}^\pi \cot \frac{\theta + x}{2} \,d\sigma(e^{i \theta}),
\end{equation}
with atom at $x$ having weight
\[
\frac{1}{1 + \frac{1}{2} \int_{-\pi}^\pi (1 + \cot^2 \frac{\theta + x}{2}) \,d\sigma(e^{i \theta})}.
\]
See also Corollary~\ref{Cor:Finite-support} for a more detailed description. In the case $\sigma = \delta_{e^{i \theta}}$, the atoms are solutions of the equation
\[
x - \beta = \cot \frac{\theta + x}{2},
\]
with weights
\[
\frac{1}{\frac{3}{2} + \frac{1}{2} (x - \beta)^2}.
\]
\end{Prop}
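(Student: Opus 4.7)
The plan is to compute $F_\mu$ explicitly, identify its real-line zeros (which give the atoms), and verify that there is no continuous part.

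First I would simplify the integrand. Writing $\zeta = e^{i\theta}$ and using
\[
\frac{1 + e^{i(z+\theta)}}{1 - e^{i(z+\theta)}} = \frac{e^{-i(z+\theta)/2} + e^{i(z+\theta)/2}}{e^{-i(z+\theta)/2} - e^{i(z+\theta)/2}} = i \cot\left(\frac{z+\theta}{2}\right),
\]
the formula from Lemma~\ref{Cara} becomes
\[
F_\mu(z) = z - \beta - \int_{\mf{T}} \cot\left(\frac{z+\theta}{2}\right) d\sigma(e^{i\theta}).
\]
When $\sigma = \sum_k c_k \delta_{e^{i\theta_k}}$ has finite support, this is a meromorphic function on $\mf{C}$ whose only singularities on $\mf{R}$ are simple poles at the points $-\theta_k + 2\pi n$, and which is real on $\mf{R}$ away from those poles.

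Next I would argue that $\mu$ is purely atomic. From the Stieltjes inversion
\[
d\mu(x) = -\frac{1}{\pi}\lim_{y \downarrow 0} \Imm \frac{1}{F_\mu(x+iy)}\,dx,
\]
the absolutely continuous part vanishes wherever $F_\mu$ extends analytically across $\mf{R}$ with real, nonzero values, and the singular continuous part is supported on the (measure-zero) set of real boundary singularities. Hence the only contributions to $\mu$ are atoms, located precisely at the real zeros of $F_\mu$; if $F_\mu(x_0)=0$ on the real axis, then the local expansion $F_\mu(z) = F_\mu'(x_0)(z - x_0) + O((z-x_0)^2)$ gives $G_\mu(z) \sim 1/(F_\mu'(x_0)(z-x_0))$, so $x_0$ is an atom of mass $1/F_\mu'(x_0)$.

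The atom equation $F_\mu(x) = 0$ becomes exactly
\[
x - \beta = \int_{-\pi}^{\pi} \cot\left(\frac{\theta + x}{2}\right) d\sigma(e^{i\theta}),
\]
as claimed. Differentiating using $\frac{d}{dx}\cot((x+\theta)/2) = -\tfrac{1}{2}(1 + \cot^2((x+\theta)/2))$ gives
\[
F_\mu'(x) = 1 + \tfrac{1}{2}\int_{-\pi}^{\pi}\bigl(1 + \cot^2\tfrac{\theta + x}{2}\bigr) d\sigma(e^{i\theta}),
\]
whose reciprocal is the stated weight. For countability of solutions, I would note that on each of the countably many open intervals between consecutive poles of the cotangent terms, $F_\mu$ is strictly increasing (since $x$ is increasing and each $-c_k \cot((x+\theta_k)/2)$ is increasing on the intervals where it is finite) and passes from $-\infty$ to $+\infty$, so by the intermediate value theorem it has exactly one zero per interval. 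Finally, for $\sigma = \delta_{e^{i\theta}}$, specialization is immediate: the equation reduces to $x - \beta = \cot((x+\theta)/2)$, and substituting $\cot((x+\theta)/2) = x-\beta$ into the weight formula yields $1/(\tfrac{3}{2} + \tfrac{1}{2}(x-\beta)^2)$.

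The only genuine subtlety is ruling out a continuous part of $\mu$; once $F_\mu$ is recognized as meromorphic with real boundary values on $\mf{R}$, the rest is bookkeeping via Stieltjes inversion and direct computation.
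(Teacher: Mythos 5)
Your proposal is correct and follows essentially the same route as the paper: simplify the Carath{\'e}odory integrand to a cotangent, observe that $F_\mu$ is real (indeed meromorphic across $\mf{R}$) so the continuous part vanishes, and read off the atoms as the zeros of $F_\mu$ with weight $1/F_\mu'(x)$, where $F_\mu'(x)=1+\tfrac{1}{2}\int\csc^2\tfrac{\theta+x}{2}\,d\sigma$. Your added monotonicity argument (exactly one zero in each interval between consecutive poles) is a welcome detail the paper defers to Corollary~\ref{Cor:Finite-support}; the only loose phrase is attributing a possible singular continuous part to the set of poles of $F_\mu$, when the relevant set is the discrete set of real zeros of $F_\mu$, where $|G_\mu|\to\infty$ --- but this does not affect the conclusion.
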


\begin{proof}
Note first that for $\zeta = e^{i \theta}$, and $z = x$ real,
\[
i \frac{1 + e^{i (\theta + x)}}{1 - e^{i (\theta + x)}}
= - \cot \frac{\theta + x}{2}.
\]
So for real $z = x$, the function corresponding to the pair $(\beta, \sigma)$
\[
F_\mu(x) = x - \beta - \int_{-\pi}^\pi \cot \frac{\theta + x}{2} \,d\sigma(e^{i \theta})
\]
is real wherever it is finite. It follows that $\mu$ has no absolutely continuous part, but has countably many atoms at the solutions of equation \eqref{Atoms}, with the weights
\[
\frac{1}{1 + \frac{1}{2} \int_{-\pi}^\pi \csc^2 \frac{\theta + x}{2} \,d\sigma(e^{i \theta})}
= \frac{1}{1 + \frac{1}{2} \int_{-\pi}^\pi (1 + \cot^2 \frac{\theta + x}{2}) \,d\sigma(e^{i \theta})}. \qedhere
\]
\end{proof}

\begin{Remark}
For general $\mu \in \mc{L}$,
\[
G_\mu(z) = \frac{1}{z + f(e^{iz})},
\]
and so the absolutely continuous part of the measure is
\begin{equation}
\label{Density}
d\mu(x) = \frac{1}{\pi} \lim_{r \uparrow 1} \frac{\Imm f(r e^{ix})}{(x + \Re f(r e^{ix}))^2 + (\Imm f(r e^{ix}))^2} \,dx.
\end{equation}
There are many explicit examples one can write down for specific functions $f$, perhaps the simplest one being $f(z) = z + i$. In this case,
\[
d\mu(x)
= \frac{1}{\pi} \frac{1 + \sin x}{(x + \cos x)^2 + (1 + \sin x)^2} \,dx
= \frac{1}{\pi} \frac{1 + \sin x}{x^2 + 2 x \cos x + 2 + 2 \sin x} \,dx
\]
As will be seen below, $\eta_{W(\mu)}(z) = z e^{-1} e^{iz}$.

Another important case is when $\sigma$ is the Haar measure, which corresponds to $\mu$ being a Cauchy distribution on the circle. According to Example 4.11 in \cite{Ariz-Hasebe-Semigroups}, for $c = e^{-a + i b}$, this measure on the circle is
\[
d\nu(\theta) = \frac{1}{2 \pi} \frac{1 - e^{-2a}}{1 + e^{-2a} - 2 e^{-a}  \cos(\theta - b) } \,d\theta,
\]
with $\eta_\nu(z) = c z$.
See Example~\ref{Example:Cauchy} for more details.
\end{Remark}

\begin{Lemma}
\label{Lemma:Modulo}
\[
\mc{L} = \set{\nu : \nu \rhd \delta_{2 \pi} = \delta_{2 \pi} \rhd \nu}.
\]
In addition, for $\nu \in \mc{L}$,
\[
\delta_{2 \pi n} \uplus \nu = \delta_{2 \pi n} \rhd \nu = \nu \rhd \delta_{2 \pi n} = \delta_{2 \pi n} \boxplus \nu = \delta_{2 \pi n} \ast \nu.
\]
We can thus use the notation $\mod \delta_{2 \pi}$ for measures in $\mc{L}$ without specifying which additive convolution is being used. For measures in $\mc{L}$, all convolution arithmetic is well defined $\mod \delta_{2 \pi}$.
\end{Lemma}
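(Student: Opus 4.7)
The plan is to read the defining condition of $\mc{L}$ through the $F$-transform: by Lemma~\ref{Lemma:Conditions}, $\nu \in \mc{L}$ is equivalent to the periodicity relation $F_\nu(z+2\pi) = F_\nu(z) + 2\pi$ on $\mf{C}^+$. For the first identity, I would translate $\nu \rhd \delta_{2\pi}$ and $\delta_{2\pi} \rhd \nu$ into $F$-transforms. Since $F_{\delta_c}(z) = z - c$, composition immediately gives
\[
F_{\delta_{2\pi} \rhd \nu}(z) = F_\nu(z) - 2\pi, \qquad F_{\nu \rhd \delta_{2\pi}}(z) = F_\nu(z - 2\pi).
\]
Equality of these two functions on $\mf{C}^+$ is precisely the periodicity condition, so both inclusions of the first displayed identity follow.

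For the second part, fix $\nu \in \mc{L}$ and iterate the periodicity to obtain $F_\nu(z + 2\pi n) = F_\nu(z) + 2\pi n$ for every $n \in \mf{Z}$. I would then compute the $F$-transform of each of $\delta_{2\pi n} \ast \nu$, $\delta_{2\pi n} \uplus \nu$, $\delta_{2\pi n} \boxplus \nu$, $\delta_{2\pi n} \rhd \nu$, and $\nu \rhd \delta_{2\pi n}$, showing that each equals $F_\nu(z) - 2\pi n = F_\nu(z - 2\pi n)$. The classical case is a direct translation of the density; the Boolean case is immediate from additivity of $F - z$; the two monotone cases are the calculation of the preceding paragraph with $2\pi$ replaced by $2\pi n$. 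Uniqueness of a measure from its $F$-transform then forces all five measures to agree.

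The only step that requires more than unwrapping definitions is the free case. Here I would use $F_{\delta_c}^{-1}(z) = z + c$ to get $\phi_{\delta_{2\pi n}} \equiv 2\pi n$, whence $F^{-1}_{\delta_{2\pi n} \boxplus \nu}(z) = F_\nu^{-1}(z) + 2\pi n$. Setting $w = F^{-1}_{\delta_{2\pi n} \boxplus \nu}(z)$ gives $F_\nu(w - 2\pi n) = z$, and the extended periodicity converts this into $F_\nu(w) = z + 2\pi n$, i.e.\ $F_{\delta_{2\pi n} \boxplus \nu}(w) = F_\nu(w) - 2\pi n$ on the image of $F^{-1}_{\delta_{2\pi n} \boxplus \nu}$, extended to $\mf{C}^+$ by analytic continuation. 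I expect this inversion to be the only genuine subtlety; one should also verify that $F_\nu - 2\pi n$ is itself a bona fide $F$-transform, which is automatic since shifting by a real constant preserves the Nevanlinna property $\Imm F(z) \geq \Imm z$.

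Finally, having identified all five $F$-transforms, I would conclude that writing $\mod \delta_{2\pi}$ for a measure in $\mc{L}$ makes sense independently of which of the four additive convolutions is used, and hence that all convolution arithmetic on $\mc{L}$ descends consistently modulo $\delta_{2\pi}$.
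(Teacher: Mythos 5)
Your proposal is correct and follows essentially the same route as the paper: everything is read off from the $F$-transform identities $F_{\nu\rhd\delta_{2\pi}}(z)=F_\nu(z-2\pi)$ and $F_{\delta_{2\pi}\rhd\nu}(z)=F_\nu(z)-2\pi$, with the periodicity $F_\nu(z+2\pi n)=F_\nu(z)+2\pi n$ gluing the five convolutions together. The paper merely shortcuts your case-by-case check by observing that $\delta_a\ast\nu=\delta_a\boxplus\nu=\nu\rhd\delta_a$ (all shifts by $a$) and $\delta_a\uplus\nu=\delta_a\rhd\nu$ hold for arbitrary $\nu$, so that only the equality of the two groups requires $\nu\in\mc{L}$; your explicit Voiculescu-transform computation for the free case is a correct, if slightly longer, substitute for this standard shift fact.
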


\begin{proof}
For general $a$ and $\nu$,
\[
\delta_a \ast \nu = \delta_a \boxplus \nu = \nu \rhd \delta_a,
\]
since all of these are the shift of $\nu$ by $a$, and also $\delta_a \uplus \nu = \delta_a \rhd \nu$. Next we note that
\[
F_{\nu \rhd \delta_{2 \pi}}(z) = F_\nu(F_{\delta_{2 \pi}}(z)) = F_\nu(z - 2 \pi)
\]
while
\[
F_{\delta_{2 \pi} \rhd \nu}(z) = F_{\delta_{2 \pi}}(F_\nu(z)) = F_\nu - 2 \pi.
\]
These are equal precisely when $\nu \in \mc{L}$.
\end{proof}



\begin{Lemma}
\label{Lemma:L-convolution}
$\mc{L}$ is closed under the three additive convolution operations $\uplus, \boxplus, \rhd$, under the subordination operation $\boxright$, and under Boolean additive convolution powers. Whenever $\mu \in \mc{L}$ and $\nu = \mu^{\boxplus t}$, then also $\nu \in \mc{L}$. For $\mu \in \mc{P}(\mf{R})$, and  $t>0$,
\[
\mu \in \mc{L} \Leftrightarrow \mf{B}_t(\mu) \in \mc{L},.
\]
\end{Lemma}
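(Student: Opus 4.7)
The key observation is that $F \in F_{\mc{L}}$ (i.e.\ $F(z+2\pi) = F(z)+2\pi$) is equivalent, on any open subset of the upper half-plane where $F^{-1}$ is well-defined, to $F^{-1}(z+2\pi)=F^{-1}(z)+2\pi$, and hence to $\phi_\mu$ being $2\pi$-periodic. Indeed, if $F(w)=z$ then $F(w+2\pi)=z+2\pi$, so $F^{-1}(z+2\pi)=F^{-1}(z)+2\pi$, giving $\phi_\mu(z+2\pi)=\phi_\mu(z)$; conversely, periodicity of $\phi_\mu$ yields the shift property for $F_\mu^{-1}$, and then for $F_\mu$. Moreover, since all the transforms under consideration are analytic on $\mf{C}^+$ and take values in $\mf{C}^+$, once the shift identity $F(z+2\pi)=F(z)+2\pi$ is verified on any non-empty open set (e.g.\ a Stolz angle or a neighborhood of $i\infty$), it extends to all of $\mf{C}^+$ by the identity theorem. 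I will use these two facts freely below.

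The closure under $\uplus$, $\boxplus$, $\rhd$ and under Boolean and free convolution powers is then essentially bookkeeping. For Boolean, $F_{\mu\uplus\nu}(z)-z = (F_\mu(z)-z)+(F_\nu(z)-z)$ is a sum of two $2\pi$-periodic analytic functions, hence itself $2\pi$-periodic. For monotone, $F_{\mu\rhd\nu}(z+2\pi) = F_\mu(F_\nu(z)+2\pi) = F_\mu(F_\nu(z))+2\pi$ by applying the shift property of $F_\nu$ and then of $F_\mu$. For free, $\phi_{\mu\boxplus\nu}=\phi_\mu+\phi_\nu$ is $2\pi$-periodic, hence $F_{\mu\boxplus\nu}\in F_{\mc{L}}$ by the observation above. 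Boolean powers satisfy $F_{\mu^{\uplus t}}(z)-z = t(F_\mu(z)-z)$, and free powers satisfy $\phi_{\mu^{\boxplus t}}(z)=t\phi_\mu(z)$; both identities immediately inherit $2\pi$-periodicity from the $t=1$ case, so the closure under convolution powers follows by the same arguments.

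For the subordination operation $\boxright$, we use that $\omega_2(z):=F_{\mu\boxright\nu}(z)=F_\nu^{-1}(F_{\mu\boxplus\nu}(z))$ is the subordination function satisfying $F_\nu(\omega_2(z))=F_{\mu\boxplus\nu}(z)$. Applying the shift property of $F_{\mu\boxplus\nu}$ (which we just established lies in $F_{\mc{L}}$) and of $F_\nu$, we obtain
\[
F_\nu(\omega_2(z+2\pi)) = F_{\mu\boxplus\nu}(z+2\pi) = F_{\mu\boxplus\nu}(z)+2\pi = F_\nu(\omega_2(z)+2\pi).
\]
Since $F_\nu$ is injective on a Stolz angle near $i\infty$, and both $\omega_2(z+2\pi)$ and $\omega_2(z)+2\pi$ land there when $z$ does (as $\omega_2$ is a self-map of $\mf{C}^+$ with $\omega_2(z)/z\to 1$ at $i\infty$), we conclude $\omega_2(z+2\pi)=\omega_2(z)+2\pi$ near $i\infty$, and then on all of $\mf{C}^+$ by analytic continuation.

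Finally, the Belinschi–Nica statement follows by composition: $\mf{B}_t(\mu)=(\mu^{\boxplus(1+t)})^{\uplus 1/(1+t)}$, so $\mu\in\mc{L}$ implies $\mf{B}_t(\mu)\in\mc{L}$ by the closure under free and Boolean powers. For the converse, if $\mf{B}_t(\mu)\in\mc{L}$, applying the Boolean power identity with exponent $1+t$ recovers $\mu^{\boxplus(1+t)}\in\mc{L}$; hence $\phi_{\mu^{\boxplus(1+t)}}=(1+t)\phi_\mu$ is $2\pi$-periodic, so $\phi_\mu$ is $2\pi$-periodic, so $\mu\in\mc{L}$. The main (minor) obstacle throughout is keeping straight the domains on which $F^{-1}$ and $\phi$ are defined, but this is dispatched uniformly by the analytic-continuation principle noted at the outset.
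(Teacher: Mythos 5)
Your proof is correct and follows essentially the same route as the paper's: direct verification of the shift identity for $\uplus$, $\rhd$, and Boolean powers, the observation that $F(z+2\pi)=F(z)+2\pi$ passes to $F^{-1}$ (equivalently, that $\phi_\mu$ is $2\pi$-periodic) combined with analytic continuation for $\boxplus$, free powers, and $\boxright$, and composition of these closures for the $\mf{B}_t$ statement. You merely spell out the domain/identity-theorem bookkeeping that the paper compresses into the phrase ``analytic continuation.''
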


\begin{proof}
Let $\mu, \nu \in \mc{L}$. Then
\[
F_{\mu \rhd \nu}(z + 2 \pi) = F_\mu(F_\nu(z + 2 \pi)) = F_\mu(F_\nu(z)) + 2 \pi  = F_{\mu \rhd \nu}(z) + 2 \pi,
\]
\[
F_{\mu \uplus \nu}(z + 2 \pi) = F_\mu(z + 2 \pi) + F_\nu(z + 2 \pi) - (z + 2 \pi) = F_\mu(z) + F_\nu(z) - z + 2 \pi = F_{\mu \uplus \nu}(z) + 2 \pi,
\]
and
\[
F_{\mu^{\uplus t}}(z + 2 \pi) = t (F_\mu(z + 2 \pi) - (z + 2 \pi)) + (z + 2 \pi) = t (F_\mu(z) - z) + z + 2 \pi = F_{\mu^{\uplus t}}(z) + 2 \pi.
\]
Therefore $\mc{L}$ is closed under all these operations. Next, we observe that if $F(z + 2 \pi) = F(z) + 2 \pi$, then on its domain of definition,
\[
F^{-1}(z + 2 \pi) = F^{-1}(z) + 2 \pi.
\]
The statements about free convolution follow from this and analytic continuation. The result for the subordination distribution follows from the relation
\[
F_{\mu \boxright \nu}(z) = F_\nu^{-1}(F_{\mu \boxplus \nu}(z))
\]
and, again, analytic continuation.

Clearly, these results imply that $\mu \in \mc{L} \Rightarrow \mf{B}_t(\mu) \in \mc{L}$. Conversely, suppose $\mf{B}_t(\mu) \in \mc{L}$. Then $\mu^{\boxplus (1 + t)} \in \mc{L}$. So by the results above, $\mu \in \mc{L}$.
\end{proof}

\begin{Lemma}
\label{Lemma:Monotone-semigroups}
If $\mu \in \mc{L}$ and $\mu$ is $\rhd$-infinitely divisible, then $\mu^{\rhd t} \in \mc{L}$ for all $t > 0$. The generators of such semigroups are precisely those which are $2 \pi$-periodic.
\end{Lemma}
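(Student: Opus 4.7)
My strategy is to exploit the Loewner-type evolution equation from \eqref{LH-Monotone}, namely $\partial_t F_t(z) = \Phi(F_t(z))$ with $F_0(z) = z$ and $F_t := F_{\mu^{\rhd t}}$, and to compare the solution $F_t$ with its ``shifted'' version
\[
G_t(z) := F_t(z + 2\pi) - 2\pi.
\]
Both implications of the equivalence will follow from showing $F_t \equiv G_t$ under the appropriate hypothesis.

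For the ``if'' direction, suppose the generator $\Phi$ is $2\pi$-periodic. A direct computation gives
\[
\partial_t G_t(z) = \Phi(F_t(z + 2\pi)) = \Phi(G_t(z) + 2\pi) = \Phi(G_t(z)), \qquad G_0(z) = z,
\]
so $G_t$ satisfies the same Cauchy problem as $F_t$. Uniqueness of solutions of the Loewner ODE forces $G_t \equiv F_t$, i.e.\ $F_t(z + 2\pi) = F_t(z) + 2\pi$, which means $\mu^{\rhd t} \in \mc{L}$ for every $t > 0$. In particular $\mu = \mu^{\rhd 1} \in \mc{L}$.

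For the converse, assume $\mu \in \mc{L}$ is $\rhd$-infinitely divisible with generator $\Phi$ and semigroup $\{F_t\}$. Using the monotone composition law $F_{s+t} = F_s \circ F_t$, one checks immediately that $\{G_t\}$ satisfies the same composition law, and by Lemma~\ref{Lemma:Modulo} each $G_t$ is the $F$-transform of the probability measure $\delta_{2\pi} \rhd \mu^{\rhd t} \rhd \delta_{-2\pi}$; so $\{G_t\}$ is itself a $\rhd$-semigroup of probability measures. Since $\mu \in \mc{L}$, $G_1 = F_1 = F_\mu$. The Lévy-Khintchin bijection from \eqref{LH-Monotone} guarantees that the pair $(\alpha, \tau)$---and therefore the generator $\Phi$ and the entire semigroup---is uniquely determined by $\mu = \mu^{\rhd 1}$, so we must have $G_t \equiv F_t$ for all $t$. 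Consequently $\mu^{\rhd t} \in \mc{L}$ for every $t > 0$. Differentiating the identity $F_t(z + 2\pi) = F_t(z) + 2\pi$ in $t$ and using the ODE yields $\Phi(F_t(z) + 2\pi) = \Phi(F_t(z))$; and since $F_t \to \mathrm{id}$ as $t \to 0$, the range of $F_t$ covers an open subset of $\mf{C}^+$ for small $t$, so by analytic continuation $\Phi(w + 2\pi) = \Phi(w)$ on all of $\mf{C}^+$.

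\textbf{Main obstacle.} The only genuine ingredient beyond routine ODE manipulation is the uniqueness of the $\rhd$-semigroup through a given $\rhd$-infinitely divisible measure, which is what lets me promote the hypothesis ``$F_1 \in F_\mc{L}$'' to ``$F_t \in F_\mc{L}$ for all $t$.'' This uniqueness is encoded in the bijection \eqref{LH-Monotone} between pairs $(\alpha, \tau)$ and $\mc{ID}^\rhd$; without invoking it, one could only compare the two semigroups at time $t = 1$ and would be unable to conclude periodicity of the generator.
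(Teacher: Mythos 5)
Your proof is correct and follows essentially the same route as the paper: the shifted family $G_t(z) = F_t(z+2\pi) - 2\pi$ is exactly the $F$-transform of the conjugated semigroup $\delta_{2\pi} \rhd \mu^{\rhd t} \rhd \delta_{-2\pi}$ that the paper uses, and in both arguments the crux is that a $\rhd$-infinitely divisible measure determines its semigroup (equivalently its generator) uniquely, so agreement at $t=1$ forces agreement for all $t$. Your write-up is if anything slightly more explicit about where that uniqueness comes from.
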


\begin{proof}
Let $\mu$ be $\rhd$-infinitely divisible, and denote by $\Phi$ the generator of the semigroup $\set{\mu_t = \mu^{\rhd t}}$. Denote $\nu_t = \delta_{2 \pi} \rhd \mu_t \rhd \delta_{-2\pi}$. Then $\set{\nu_t}$ is also a $\rhd$-semigroup. Since
\[
F_{\nu_t}(z) = F_{\mu_t}(z + 2 \pi) - 2 \pi,
\]
the generator of $\set{\nu_t}$ is $\tilde{\Phi}(z) = \Phi(z + 2 \pi)$. If $\mu \in \mc{L}$, then $\mu = \mu_1 = \nu_1$. But this says that $\mu_t = \nu_t$ for all $t > 0$, that is, all $\mu_t \in \mc{L}$. It also follows that $\Phi(z + 2 \pi) = \Phi(z)$. Conversely, from the definition of $F_{\mc{L}}$ it is clear that for such $\Phi$, the corresponding composition semigroup $\set{F_t : t > 0}$ is in $F_\mc{L}$.
\end{proof}

\begin{Prop}
The Bercovici-Pata bijections between $\mc{P} = \mc{ID}^{\uplus}$, $\mc{ID}^\rhd$, and $\mc{ID}^{\boxplus}$ restrict to bijections between $\mc{L}$, $\mc{L} \cap \mc{ID}^\rhd$ and $\mc{L} \cap \mc{ID}^{\boxplus}$.
\end{Prop}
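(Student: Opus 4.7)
The plan is to exploit the common Pick--Nevanlinna structure of the three Bercovici--Pata formulas \eqref{LH-add}--\eqref{LH-Monotone}, parametrized by a pair $(\alpha, \tau)$. Introducing the shorthand
\[
\Psi_{\alpha,\tau}(z) = -\alpha + \int_{\mf{R}} \frac{1 + xz}{x - z} \,d\tau(x),
\]
a direct comparison of the three formulas yields
\[
F_{\nu_\uplus^{\alpha,\tau}}(z) - z \;=\; \Psi_{\alpha,\tau}(z) \;=\; \Phi(z) \;=\; -\phi_{\nu_\boxplus^{\alpha,\tau}}(z),
\]
so these three functions are $2\pi$-periodic in $z$ simultaneously.

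By Lemma~\ref{Lemma:Conditions}, membership in $\mc{L}$ is equivalent to $F_\mu(z) - z$ being $2\pi$-periodic on $\mf{C}^+$. Consequently $\nu_\uplus^{\alpha,\tau} \in \mc{L}$ if and only if $\Psi_{\alpha,\tau}$ is $2\pi$-periodic, if and only if $\Phi$ is $2\pi$-periodic, if and only if $\phi_{\nu_\boxplus^{\alpha,\tau}}$ is $2\pi$-periodic. Under this common condition, the remaining two classes are handled by earlier lemmas.

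From the $2\pi$-periodicity of $\Phi$, Lemma~\ref{Lemma:Monotone-semigroups} asserts that the entire composition semigroup generated by $\Phi$ lies in $F_{\mc{L}}$; in particular $\nu_\rhd^{\alpha,\tau} = F_1 \in \mc{L} \cap \mc{ID}^{\rhd}$. Conversely, if $\nu_\rhd^{\alpha,\tau} \in \mc{L} \cap \mc{ID}^{\rhd}$, the same lemma forces $\Phi$ to be $2\pi$-periodic, hence $\nu_\uplus^{\alpha,\tau} \in \mc{L}$. On the free side, writing $\phi_{\nu_\boxplus^{\alpha,\tau}}(z) = F_{\nu_\boxplus^{\alpha,\tau}}^{-1}(z) - z$ (extended analytically to all of $\mf{C}^+$, which is automatic for $\boxplus$-infinitely divisible measures), the $2\pi$-periodicity of $\phi_{\nu_\boxplus^{\alpha,\tau}}$ translates to $F_{\nu_\boxplus^{\alpha,\tau}}^{-1}(z + 2\pi) = F_{\nu_\boxplus^{\alpha,\tau}}^{-1}(z) + 2\pi$. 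The shift-invariance observation recorded in the proof of Lemma~\ref{Lemma:L-convolution}, that this property passes between an analytic self-map of $\mf{C}^+$ and its inverse, then gives $F_{\nu_\boxplus^{\alpha,\tau}} \in F_{\mc{L}}$, i.e., $\nu_\boxplus^{\alpha,\tau} \in \mc{L} \cap \mc{ID}^{\boxplus}$; the converse direction is the same chain read backwards.

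I do not foresee a genuine obstacle: the essential content is the common Pick--Nevanlinna identification of the three parametrizations, after which each direction reduces either to Lemma~\ref{Lemma:Monotone-semigroups} or to the inversion observation of Lemma~\ref{Lemma:L-convolution}. The only subtlety worth a moment's care is that $\phi_{\nu_\boxplus^{\alpha,\tau}}$ be available as an analytic function on all of $\mf{C}^+$ rather than merely on a Stolz angle, but this is a standard feature of $\boxplus$-infinitely divisible laws.
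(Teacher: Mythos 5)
Your proof is correct and follows essentially the same route as the paper: identify the common Pick function $\Psi_{\alpha,\tau}$ underlying the three Bercovici--Pata formulas, transfer $2\pi$-periodicity between $F_{\nu_\uplus}-z$, $\Phi$, and $\phi_{\nu_\boxplus}$, and then invoke Lemma~\ref{Lemma:Monotone-semigroups} for the monotone case and the inversion-plus-analytic-continuation observation from Lemma~\ref{Lemma:L-convolution} for the free case. If anything, your version is slightly cleaner: you correctly record that $\phi_{\nu_\boxplus^{\alpha,\tau}}$ is $2\pi$-\emph{periodic} (the paper's proof writes $\phi_\nu(z+2\pi)=\phi_\nu(z)+2\pi$, which appears to be a slip), and you explicitly flag the Stolz-angle subtlety that the paper handles implicitly.
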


\begin{proof}
For $\mu \in \mc{L}$, suppose $\phi_\nu(z) = z - F_\mu(z)$. Then $\phi_\nu(z + 2 \pi) = \phi_\nu(z) + 2 \pi$, $F_\nu$ has the same property, and $\nu \in \mc{L} \cap \mc{ID}^{\boxplus}$. The converse is similar.

Next suppose again that $\mu \in \mc{L}$, and let $\Phi^+(z) = f_\mu(e^{iz})$. $\Phi^+$ maps $\mf{C}^+$ to itself, and
\[
\lim_{y \uparrow \infty} \frac{\Phi^+(iy)}{iy} = 0.
\]
So $\Phi^+$ generates a $\rhd$-semigroup $\set{\nu_t : t \geq 0}$ through
\[
\frac{d}{dt} F_{\nu_t}(z) = \Phi^+(F_{\nu_t}(z)),
\]
where $\nu = \nu_1$. Moreover, $\Phi^+$ is $2 \pi$-periodic. Therefore by Lemma~\ref{Lemma:Monotone-semigroups}, $\nu = \nu_1 \in \mc{L}$. The converse follows similarly from Lemma~\ref{Lemma:Monotone-semigroups}.
\end{proof}

%


\begin{Prop}
All the elements in the class $\mc{L}$ which are not point masses are in the classical, Boolean, free, and monotone (strict) domains of attraction of the Cauchy law. Therefore their moments of all orders are undefined.
\end{Prop}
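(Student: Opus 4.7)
The plan is to leverage the Carath{\'e}odory representation from Lemma~\ref{Cara}: writing $\mu \in \mc{L}$ in the form $F_\mu(z) = z - \beta + i \int_{\mf{T}} \frac{1+\zeta e^{iz}}{1-\zeta e^{iz}} \,d\sigma(\zeta)$, the measure is a point mass precisely when $\sigma = 0$, so I may assume $\sigma(\mf{T}) > 0$. The key asymptotic is that as $\Imm z \to \infty$ we have $|e^{iz}| \to 0$, so dominated convergence yields $F_\mu(z) - z \to -\beta + i\sigma(\mf{T})$, a constant with strictly positive imaginary part. Since a finite first moment $m$ would force $F_\mu(iy) - iy \to -m \in \mf{R}$, this immediately gives the absence of a first (and hence any higher) moment.

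For the Boolean strict domain of attraction, let $D_c \mu$ denote the push-forward of $\mu$ under $x \mapsto cx$, so that $F_{D_c\mu}(z) = c F_\mu(z/c)$. A direct computation gives
\[
F_{D_{1/n}(\mu^{\uplus n})}(z) = z + (F_\mu(nz) - nz) \;\longrightarrow\; z - \beta + i\sigma(\mf{T}),
\]
uniformly on compact subsets of $\mf{C}^+$, and the right-hand side is the $F$-transform of the Cauchy law with parameters $(\beta, \sigma(\mf{T}))$. Hence $D_{1/n}(\mu^{\uplus n})$ converges weakly to this Cauchy, with no centering needed. For the free case, translating $F_\mu(z) - z \to -\beta + i\sigma(\mf{T})$ through the functional equation $F_\mu(z + \phi_\mu(z)) = z$ gives $\phi_\mu(z) \to \beta - i\sigma(\mf{T})$, and since $\phi_{D_{1/n}(\mu^{\boxplus n})}(z) = \phi_\mu(nz)$, this yields convergence to the Voiculescu transform of free Cauchy.

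For the classical and monotone cases, the cleanest route is to invoke the Bercovici--Pata principle that the classical, free, Boolean, and monotone strict domains of attraction of the Cauchy law coincide. The main obstacle will be the monotone case, since $\mu^{\rhd n}$ corresponds to the $n$-fold composition $F_\mu^{\circ n}$ rather than to a linear scaling. A direct argument proceeds inductively: for fixed $z \in \mf{C}^+$, each composition step adds an increment $f(e^{i F_\mu^{\circ k}(nz)})$, and since $\Imm F_\mu^{\circ k}(nz) \geq \Imm(nz) \to \infty$ the exponential factor vanishes, so each increment tends to $f(0) = -\beta + i\sigma(\mf{T})$. Summing the $n$ small errors uniformly then gives $F_\mu^{\circ n}(nz)/n \to z - \beta + i\sigma(\mf{T})$. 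Apart from this bookkeeping, the rest of the proof reduces to the one-line asymptotic of the first paragraph.
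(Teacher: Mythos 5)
Your proposal is correct and follows essentially the same route as the paper: the Boolean case via $F_{D_{1/n}\mu^{\uplus n}}(z)-z = f_\mu(e^{inz}) \to f_\mu(0)$ (your $-\beta+i\sigma(\mf{T})$ is exactly $f_\mu(0)$ in the Carath{\'e}odory representation, nonreal iff $\mu$ is not a point mass), and the monotone case via the telescoping sum of increments $f_\mu(e^{iF_\mu^{\circ k}(nz)})$, each close to $f_\mu(0)$ because $\Imm F_\mu^{\circ k}(nz)\geq n\Imm z$. The only cosmetic differences are that you verify the free case directly through $\phi_\mu(nz)\to\beta-i\sigma(\mf{T})$ and derive the nonexistence of moments from the $F$-asymptotics, where the paper instead cites the known coincidence of the classical, free, and Boolean domains of attraction.
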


\begin{proof}
Let $\mu \in \mc{L}$. Then
\[
F_{D_{1/n} \mu^{\uplus n}}(z) - z = F_\mu(nz) - z = f_\mu(e^{i n z}) \rightarrow f_\mu(0)
\]
for every fixed $z \in \mf{C}^+$ as $n \rightarrow \infty$. So $\mu$ is in the Boolean domain of attraction of the law with the $F$-transform $f_\mu(0)$. If $f_\mu(0) \not \in \mf{R}$, this limit law is a Cauchy law. If $f_\mu(0) \in \mf{R}$, then $f_\mu(z) = f_\mu(0)$, and $\mu$ is a point mass.

The Boolean domain of attraction is known to be equal to the classical and free one (recall that the Cauchy law plays the same role in all of these theories). For the monotone case, we observe that
\[
\begin{split}
& \abs{F_{D_{1/n} \mu^{\rhd n}}(z) - z - f_\mu(0)}
= \abs{\frac{1}{n} F_\mu^{\circ n}(n z) - z - f_\mu(0)} \\
&= \frac{1}{n} \left( \abs{f_\mu(e^{i n z}) - f(0)} + \abs{f_\mu(e^{i n z} e^{i f_\mu(e^{i n z})}) - f_\mu(0)} + \abs{f_\mu(e^{i n z} e^{i f_\mu(e^{i n z} e^{i f_\mu(e^{i n z})})}) - f_\mu(0)} + \ldots \right) \\
& \leq \eps
\end{split}
\]
for any $n$ such that $\abs{f_\mu(w) - f_\mu(0)} < \eps$ for $w$ in the disk of radius $e^{- n \Imm(z)}$ around $0$.
\end{proof}

Recall that a distribution $\mu$ is called unimodal at $c$ if
\[
d\mu(x) = \mu(\set{c}) \delta_c(x) + f(x) \,dx,
\]
where $f$ is non-increasing on $[c,\infty)$ and non-decreasing on $(-\infty, c]$.

The following result characterizing unimodality in terms of the Cauchy transform  was communicated to us by Franz and Hasebe.; see also \cite{Isii-Unimodal}.

\begin{Lemma}[\cite{Franz-Hasebe-private}] \label{uni2} Let $\mu$ be a probability measure on $\real$. The following are equivalent.
\begin{enumerate}
\item $\mu$ is unimodal with mode $c$.
\item $\Imm((z-c)G_\mu'(z)) \geq0,\quad x\in\comp^+.$
\item There exists a probability measure $\nu$ on $\real$ such that $\mu=(\mathbf{u}\circledast \nu)\ast\delta_c$.
\end{enumerate}
\end{Lemma}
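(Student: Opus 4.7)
The plan is to prove the equivalence by the circuit $(3) \Rightarrow (2)$, $(2) \Rightarrow (3)$, $(3) \Rightarrow (1)$, $(1) \Rightarrow (3)$. Translating by $c$, I may assume $c = 0$, since $\mu_0 = \mu \ast \delta_{-c}$ satisfies $G_{\mu_0}(z) = G_\mu(z+c)$, making (2) equivalent to $\Imm(z G_{\mu_0}'(z)) \geq 0$ on $\mf{C}^+$. The engine of the whole argument is a single identity: if $\mu = \mb{u} \circledast \nu$, where $\mb{u}$ denotes the uniform distribution on $[0,1]$, then conditioning on the value of a $\nu$-distributed variable and integrating the Cauchy kernel against the uniform density yields
\[
G_\mu(z) = \int_\mf{R} \frac{1}{y} \log\frac{z}{z-y}\, d\nu(y), \qquad\text{hence}\qquad -z G_\mu'(z) = G_\nu(z).
\]

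The direction $(3) \Rightarrow (2)$ is then immediate, since $G_\nu$ maps $\mf{C}^+$ into $\mf{C}^-$. For $(3) \Rightarrow (1)$ I would compute the density of $\mb{u} \circledast \nu$ directly: for $x > 0$ it equals $\int_x^\infty y^{-1}\, d\nu(y)$, manifestly non-increasing in $x$, with the symmetric expression on $(-\infty, 0)$ non-decreasing; the atom at $0$ of $\mu$ matches $\nu(\set{0})$.

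For the converse $(2) \Rightarrow (3)$, set $H(z) := -z G_\mu'(z)$. By hypothesis $H : \mf{C}^+ \to \overline{\mf{C}^-}$ is analytic. To recognize $H$ as the Cauchy transform of a probability measure, I need the asymptotic $iy H(iy) \to 1$. This follows from the representation
\[
z^2 G_\mu'(z) = -\int_\mf{R} \frac{1}{(1-x/z)^2}\, d\mu(x),
\]
whose integrand is bounded by $1$ on the imaginary axis and converges pointwise to $1$; dominated convergence gives $y^2 G_\mu'(iy) \to 1$, i.e., $iy H(iy) \to 1$. By Stieltjes inversion, $H = G_\nu$ for a unique probability measure $\nu$. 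Setting $\tilde\mu := \mb{u} \circledast \nu$, the key identity above applied to $\tilde\mu$ gives $G_{\tilde\mu}' = G_\mu'$; since both Cauchy transforms vanish at infinity, $G_{\tilde\mu} = G_\mu$, so $\mu = \tilde\mu$.

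The remaining implication $(1) \Rightarrow (3)$ is Khintchine's classical theorem. Given $\mu$ unimodal with density $f$ and possible atom at $0$, I would define $\nu$ by $d\nu(y) = -y \, df(y)$ on $(0,\infty)$, $d\nu(y) = y \, df(y)$ on $(-\infty, 0)$, and $\nu(\set{0}) = \mu(\set{0})$; integration by parts (using $|y| f(y) \to 0$ as $|y| \to \infty$, automatic for monotone integrable $f$) verifies that $\nu$ is a probability measure, and then $\mb{u} \circledast \nu = \mu$ is checked either directly from the density formula above, or via $G_{\mb{u} \circledast \nu} = G_\mu$ using the key identity. The main obstacle I anticipate is the asymptotic in $(2) \Rightarrow (3)$: measures in $\mc{L}$ possess no moments, so one cannot use a moment expansion of $G_\mu$ at infinity and must rely on the dominated convergence argument above.
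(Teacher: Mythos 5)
Your proof is correct and follows essentially the same route as the paper: the identity $-zG_{\mathbf{u}\circledast\nu}'(z)=G_\nu(z)$ drives the equivalence of (2) and (3) via the Nevanlinna--Pick representation, and (1) $\Leftrightarrow$ (3) is Khintchine's classical theorem, exactly as in the paper's argument. You merely fill in details the paper leaves implicit (the dominated-convergence proof that $iyH(iy)\to 1$ without any moment hypothesis, a derivative-comparison in place of the paper's integration step, and a direct density computation for (3) $\Rightarrow$ (1)), which is sound but not a different approach.
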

\begin{proof} For simplicity we assume that $c=0$.

(a) $\Leftrightarrow$ (c) is a classical characterization of unimodal distributions due to Khintchine.

(b) $\Rightarrow$ (c): Since $z G_\mu'(z)$ is a Pick function and $\lim_{y\to\infty} iy (iy G_\mu'(iy))=-1$, there exists a probability measure $\nu$ such that
$zG_\mu'(z)=-G_\nu(z)$. Integration gives us
\begin{equation}\label{mixture uniform}
G_\mu(z)= -\int_{\real\setminus\{0\}}\frac{1}{x}\log\left(\frac{z-x}{z}\right)\,\nu(dx)+ \frac{\nu(\{0\})}{z}.
\end{equation}
Since the Cauchy transform of the uniform distribution on $(0,x)$ (or $(x,0)$ if $x<0$)  is equal to $-\frac{1}{x}\log\left(\frac{z-x}{z}\right)$, we conclude that
$\mu=\mathbf{u}\circledast \nu$.

(c) $\Rightarrow$ (b): (c) implies the representation \eqref{mixture uniform}, which implies $zG_\mu'(z)=-G_\nu(z)$.
 \end{proof}

\begin{Lemma}\label{unimodality}
The unimodal distributions in class $\mathcal{L}$ are point masses and Cauchy distributions.
\end{Lemma}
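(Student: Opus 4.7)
The plan is to invoke the characterization of unimodality in Lemma~\ref{uni2}(b) and derive an analytic constraint on the Carath\'eodory function $f$ associated to $\mu$ via Lemma~\ref{Lemma:Conditions}. After replacing $\mu$ by $\mu\ast\delta_{-c}$ (which keeps it in $\mc{L}$, since $F_{\mu\ast\delta_{-c}}(z)=F_\mu(z+c)$ still satisfies $F(z+2\pi)=F(z)+2\pi$), we may assume the mode is at $c=0$. The condition $\Imm(zG_\mu'(z))\geq 0$ on $\mf{C}^+$ from Lemma~\ref{uni2}(b) will then force $f$ to be constant, which identifies $\mu$ as either a point mass (when $\Imm f=0$) or a Cauchy distribution (when $\Imm f>0$).

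Concretely, write $F_\mu(z)=z+f(e^{iz})$ with $f:\mf{D}\to\overline{\mf{C}^+}$, so that $F_\mu'(z)=1+ie^{iz}f'(e^{iz})$ and $zG_\mu'(z)=-zF_\mu'(z)/F_\mu(z)^2$. Fix any $w_0\in\mf{D}\setminus\{0\}$, write $w_0=e^{-y+i\alpha}$ with $y>0$, and consider the points $z_n=\alpha+2\pi n+iy$, along which $e^{iz_n}=w_0$ is constant in $n$ and $F_\mu'(z_n)=1+iw_0f'(w_0)$ is also constant. Since $f,f'$ are uniformly bounded on the compact circle $\{|w|=e^{-y}\}$, the expansion $z/(z+f(e^{iz}))^2=1/z+O(1/z^2)$ is uniform and gives
\[
\Imm\bigl(z_nG_\mu'(z_n)\bigr) = -\frac{\Imm(iw_0f'(w_0))}{\alpha+2\pi n}+O(1/n^2) \quad \text{as } n\to\pm\infty.
\]
Applying $\Imm(zG_\mu'(z))\geq 0$ at $z_n$ with $n\to+\infty$ yields $\Imm(iw_0f'(w_0))\leq 0$, and with $n\to-\infty$ yields $\Imm(iw_0f'(w_0))\geq 0$. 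Hence $\Imm(iw_0f'(w_0))=0$ for every $w_0\in\mf{D}\setminus\{0\}$.

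Since $iwf'(w)$ is analytic on $\mf{D}$, vanishes at $w=0$, and has identically zero imaginary part on $\mf{D}\setminus\{0\}$, the open mapping theorem gives $iwf'(w)\equiv 0$, so $f$ is constant. Then $F_\mu(z)=z+f(0)$, which is the $F$-transform of a point mass if $\Imm f(0)=0$ and of a Cauchy distribution if $\Imm f(0)>0$; shifting back by $c$ preserves the conclusion. The main obstacle is justifying uniformity in the asymptotic expansion: the implicit constant in $O(1/z^2)$ depends on the suprema of $|f|$ and $|f'|$ over the circle $|w|=e^{-y}$, and one must verify this uniformity along the sequence $z_n$ so that the error is strictly dominated by the principal term $-\Imm(iw_0f'(w_0))/(\alpha+2\pi n)$ as $|n|\to\infty$, enabling the extraction of the two-sided sign condition.
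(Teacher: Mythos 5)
Your proof is correct and follows essentially the same route as the paper's: both apply the criterion $\Imm(zG_\mu'(z))\geq 0$ at the translates $z+2\pi n$, exploit the periodicity $F(z+2\pi n)=F(z)+2\pi n$, $F'(z+2\pi n)=F'(z)$, and let $n\to\pm\infty$ to extract the two-sided sign condition forcing $\Imm F'\equiv 0$ (your condition $\Imm(iw_0f'(w_0))=0$ is the same statement, since $F'(z)=1+ie^{iz}f'(e^{iz})$). The uniformity issue you flag at the end is not a real obstacle: along $z_n=\alpha+2\pi n+iy$ the values $f(w_0)$ and $f'(w_0)$ are fixed constants, so the $O(1/n^2)$ error has a constant depending only on the fixed point $w_0$, and the principal $1/n$ term dominates as claimed.
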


\begin{proof}
Let $F=F_\mu$ and suppose, without loss of generality, that $\mu$ is unimodal at $0$. We will show that  $\Imm(F'(z))=0$ for all $z \in \mf{C}^+$. This implies that $F'(z) \in \mf{R}$ for such $z$ and, since $F'(z)$ is analytic, $F'(z)=a$ for some fixed $a\in\mathbb{R}$. We conclude that $F(z)= a z+b$, which yields the result.

Unimodality in terms of $F$ is written as $$ \Imm\Big(\frac{-zF'(z)}{F(z)^2}\Big)\geq 0.$$
Since $F\in\mathcal{L}$, for $n\in\mathbb{N}$,  $F(z+2\pi n)=F(z)+2\pi n$ and $F'(z+2\pi n)=F'(z)$. Therefore
$$0\leq \frac{\Imm(-(z+2\pi n)F'(z+2\pi n))}{(F(z+2\pi n)^2}=\frac{\Imm(-(z+2\pi n)F'(z))}{(F(z)+2\pi n)^2}.$$

Multipliying by $-2\pi n$ and taking limit as $n\to\infty$ we get $$\Imm(F'(z))=\lim_{n\to\infty}\Imm\Big(\frac{(2\pi n)(z+2\pi n)F'(z)}{(F(z)+2\pi n)^2}\Big)\leq0.$$
Similarly, taking the limit as as $n\to-\infty$
$$\Imm(F'(z))=\lim_{n\to-\infty}\Imm\Big(\frac{(2\pi n)(z+2\pi n)F'(z)}{(F(z)+2\pi n)^2}\Big)\geq0.$$

Thus $\Imm(F'(z))=0$ as desired.
\end{proof}

Since there are measures in $\mathcal{L}$ whose support is $\mathbb{R}$, Lemma \ref{unimodality} answers Problem 5.5 in \cite{Hasebe-Sakuma-Unimodality}.
\begin{Cor}
There exist measures $\mu$ whose support has a finite number of connected component and such that $\mu^{\boxplus s}$ is not unimodal for any $s>0$.
\end{Cor}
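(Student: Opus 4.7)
The plan is to combine the classification in Lemma~\ref{unimodality} with the closure of $\mc{L}$ under free convolution powers from Lemma~\ref{Lemma:L-convolution}. For any $\mu \in \mc{L}$ and any $s>0$ for which $\mu^{\boxplus s}$ is defined, that convolution power again lies in $\mc{L}$, so Lemma~\ref{unimodality} forces $\mu^{\boxplus s}$, if unimodal, to be either a point mass or a Cauchy distribution.

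The second step is to pull this dichotomy back to $\mu$ itself by using the identity $\phi_{\mu^{\boxplus s}}(z) = s\,\phi_\mu(z)$. A point mass $\delta_c$ has $\phi\equiv -c$, a real constant, while a Cauchy distribution has $\phi$ equal to a non-real constant; conversely, these are the only measures whose Voiculescu transform is constant. Hence $\phi_{\mu^{\boxplus s}}$ constant forces $\phi_\mu$ constant, i.e., $\mu$ is itself already a point mass or a Cauchy distribution. Taking the contrapositive, any $\mu\in\mc{L}$ that is neither a point mass nor Cauchy has $\mu^{\boxplus s}$ non-unimodal for every admissible $s>0$.

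The third step is to produce an explicit such $\mu$ whose support has finitely many connected components, for which I would take the example already written down in the text for $f(z)=z+i$, with density
\[
\frac{1}{\pi}\,\frac{1+\sin x}{(x+\cos x)^{2}+(1+\sin x)^{2}}.
\]
The denominator is strictly positive (its two summands cannot vanish at the same $x$, since $1+\sin x=0$ forces $x=-\pi/2+2\pi k\neq 0=x+\cos x$), and the numerator vanishes only on the discrete set $\set{-\pi/2+2\pi k : k\in\mf{Z}}$, so $\supp\mu=\mf{R}$ has a single connected component. This $\mu$ is evidently not a point mass, and it is not Cauchy because $F_\mu(z)=z+e^{iz}+i$ is a genuinely non-constant perturbation of $z$.

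I do not foresee a real obstacle. The only item deserving a little care is checking that $f(z)=z+i$ really maps $\mf{D}$ into $\mf{C}^+$ (it does, since $\Imm(z+i)\geq 1-\abs{\Imm z}>0$ on $\mf{D}$), which is what certifies $\mu\in\mc{L}$ and thus sets the whole argument in motion.
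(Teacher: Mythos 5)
Your proposal is correct and takes essentially the same route as the paper, which deduces the corollary from Lemma~\ref{unimodality} together with the closure of $\mc{L}$ under free convolution powers (Lemma~\ref{Lemma:L-convolution}) and the existence of measures in $\mc{L}$ with support equal to $\mf{R}$, such as the $f(z)=z+i$ example. You fill in the details the paper leaves implicit — notably that $\phi_{\mu^{\boxplus s}} = s\,\phi_\mu$ being constant forces $\phi_\mu$ constant, so $\mu^{\boxplus s}$ cannot be a point mass or Cauchy unless $\mu$ already is — and these are all sound, up to the harmless sign slip that $\phi_{\delta_c}\equiv c$ rather than $-c$.
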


\begin{Prop}
\label{Prop:Continuity-L}
$\mc{L}$ is closed under weak limits.
\end{Prop}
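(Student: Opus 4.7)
The plan is to use the Cauchy transform as a continuous probe of weak convergence and pass the functional equation defining $\mc{L}$ to the limit. Specifically, suppose $\set{\mu_n} \subset \mc{L}$ converges weakly to a probability measure $\mu$. For each fixed $z \in \mf{C}^+$, the function $x \mapsto \frac{1}{z - x}$ is bounded and continuous on $\mf{R}$, so weak convergence gives $G_{\mu_n}(z) \to G_\mu(z)$ pointwise on $\mf{C}^+$.

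Next, I would observe that $\Imm G_\mu(z) = -\int \frac{\Imm z}{\abs{z-x}^2}\,d\mu(x) < 0$ for $z \in \mf{C}^+$, so $G_\mu$ is nonvanishing on $\mf{C}^+$ and $F_\mu = 1/G_\mu$ is a well-defined analytic self-map of $\mf{C}^+$. The same holds for each $F_{\mu_n}$, and since division is continuous away from zero, $F_{\mu_n}(z) \to F_\mu(z)$ pointwise on $\mf{C}^+$.

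The defining identity $F_{\mu_n}(z + 2\pi) = F_{\mu_n}(z) + 2\pi$ holds for every $n$ and every $z \in \mf{C}^+$; taking $n \to \infty$ on both sides yields $F_\mu(z + 2\pi) = F_\mu(z) + 2\pi$, so $F_\mu \in F_{\mc{L}}$ and $\mu \in \mc{L}$.

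The only potential obstacle I can foresee is ensuring the weak limit is itself a probability measure rather than a sub-probability (mass escape to infinity). This is handled by the standing assumption that weak limits are taken within $\mc{P}(\mf{R})$; alternatively, one could note that every $\mu_n \in \mc{L}$ is $2\pi$-periodic in the sense of Lemma~\ref{Lemma:Modulo}, so tightness of $\set{\mu_n}$ is a hypothesis of weak convergence in $\mc{P}(\mf{R})$ and no additional argument is required. The rest is a direct limit argument.
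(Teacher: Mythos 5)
Your proof is correct and follows essentially the same route as the paper: establish convergence of the $F$-transforms under weak convergence and pass the defining relation $F(z+2\pi)=F(z)+2\pi$ to the limit. Your version is in fact slightly more economical, since by working directly with the definition of $F_{\mc{L}}$ (pointwise convergence of Cauchy transforms suffices) you bypass the paper's detour through the representation $F(z)=z+f(e^{iz})$ and the removable-singularity argument at $0$, which is in any case supplied by Lemma~\ref{Lemma:Conditions} once the periodicity of $F_\mu$ is known.
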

\begin{proof}
Let $\set{\mu_n}_{n=1}^\infty \subset \mc{L}$ and $\mu_n \rightarrow \mu$ weakly. Thus $F_{\mu_n} \rightarrow F_\mu$ uniformly on compact subsets of $\mf{C}^+$, and so $f_{\mu_n} \rightarrow f_\mu$ uniformly on compact subsets of $\mf{D} \setminus \set{0}$. In particular $f_\mu$ is analytic in the punctured disk. As in the proof of Lemma~\ref{Lemma:Conditions}, we conclude that the singularity of $f_\mu$ at $0$ is removable, and we may assume it is analytic in $\mf{D}$. Therefore $\mu \in \mc{L}$.
\end{proof}

\begin{Remark}
$\mc{ID}^{\sutimes}_\ast$ is \emph{not} closed under weak limits. Indeed, if $\eta_{\nu_t} = e^{-t} z$, so that $\nu_t$ is a wrapped Cauchy distribution from Example~\ref{Example:Cauchy}, then as $t \rightarrow \infty$, $\eta_{\nu_t} \rightarrow 0$, which is the $\eta$-transform of the Haar measure on $\mf{T}$.

On the other hand, it is not difficult to see that the full $\mc{ID}^{\sutimes}$ is closed under weak limits.
\end{Remark}

\section{The transformation}
\label{Section:Transformation}

\subsection{Homomorphism properties of $W$}

\begin{Prop}
\label{Prop:A-on-functions}
The map
\[
L : F_{\mc{L}} \rightarrow \set{\eta_\mu \ |\ \mu \in \mc{ID}^{\sutimes}_\ast}
\]
determined by
\[
\exp(i F(z)) = L(F)(e^{iz}) = \eta(e^{iz})
\]
is well-defined and onto. The pre-image of each $\eta$ is the equivalence class of functions $F$ modulo the equivalence relation $F \sim F + 2 \pi$.
\end{Prop}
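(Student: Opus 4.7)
The plan is to combine Lemma~\ref{Lemma:Conditions}, which writes every $F \in F_\mc{L}$ as $F(z) = z + f(e^{iz})$ for an analytic $f : \mf{D} \to \overline{\mf{C}^+}$, with the description of admissible $\eta$-transforms in \eqref{Multi-Boolean}. The identity $\exp(iF(z)) = \eta(e^{iz})$ will reduce to $\eta(w) = w \exp(if(w))$, so both directions of the bijection come down to matching this formula via an exponential/logarithm pair.

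For well-definedness I would substitute $F(z) = z + f(e^{iz})$ to get
\[
\exp(iF(z)) = e^{iz}\exp\!\bigl(i f(e^{iz})\bigr),
\]
which depends on $z$ only through $w = e^{iz}$; hence the prescription $\eta(w) := w \exp(if(w))$ unambiguously defines an analytic function on $\mf{D}$ with $L(F)(e^{iz}) = \eta(e^{iz})$. I would then verify the three conditions in \eqref{Multi-Boolean}: $\eta$ maps $\mf{D}$ into itself because $\abs{\exp(if(w))} = e^{-\Imm f(w)} \leq 1$; $\eta(w) = 0$ iff $w = 0$ because the exponential factor never vanishes; and $\eta'(0) = \exp(i f(0)) \neq 0$.

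For surjectivity I would start with $\eta \in \eta_{\mc{ID}^{\sutimes}_\ast}$ and use that $\eta(w)/w$ extends (via the simple zero at the origin provided by $\eta'(0) \neq 0$) to a nowhere-vanishing analytic function on $\mf{D}$. Since $\mf{D}$ is simply connected, I would pick a branch $g$ of $\log(\eta(w)/w)$ and set $f(w) := -i g(w)$. Schwarz's lemma gives $\abs{\eta(w)/w} \leq 1$, so $\Imm f(w) = -\log\abs{\eta(w)/w} \geq 0$, i.e.\ $f: \mf{D} \to \overline{\mf{C}^+}$. The function $F(z) := z + f(e^{iz})$ then satisfies $F(z+2\pi) = F(z) + 2\pi$ and $\Imm F(z) \geq \Imm z$, so it lies in $F_\mc{L}$, and by construction $\exp(iF(z)) = e^{iz} e^{if(e^{iz})} = \eta(e^{iz})$.

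For the fiber statement, the only freedom in the surjectivity construction is the choice of branch of $\log$, which shifts $g$ by an integer multiple of $2\pi i$, hence $f$ and $F$ by integer multiples of $2\pi$. Conversely, if $F_1, F_2 \in F_\mc{L}$ satisfy $\exp(iF_1) \equiv \exp(iF_2)$ on $\mf{C}^+$, then $F_1 - F_2$ is a continuous $2\pi\mf{Z}$-valued function on the connected set $\mf{C}^+$, hence a constant $2\pi k$. I expect the only mildly delicate point to be the edge case $\eta(w) = cw$ with $\abs{c}=1$ (a point mass in $\mc{L}$), where $f$ degenerates to a real constant rather than a strictly upper-half-plane-valued function; this causes no harm because $F(z) = z + (\text{real constant})$ still belongs to $F_\mc{L}$, so the argument goes through uniformly.
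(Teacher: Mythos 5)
Your proposal is correct and follows essentially the same route as the paper: the paper's (much terser) proof likewise reduces everything to the correspondence $F(z)=z+f(e^{iz})\leftrightarrow\eta(w)=we^{if(w)}$ from Lemma~\ref{Lemma:Conditions} and checks it against the characterization \eqref{Multi-Boolean}, with the branch-of-logarithm freedom accounting for the $F\sim F+2\pi$ fibers. You have simply supplied the details (Schwarz's lemma for $\Imm f\geq 0$, the nonvanishing of $\eta(w)/w$ on the simply connected disk, and the constant-$2\pi\mf{Z}$ argument) that the paper leaves implicit, including the harmless degenerate case of point masses.
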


\begin{proof}
We verify that for $F(z) = z + f(e^{iz})$,
\[
L(F)(z) = \eta(z) = z e^{i f(z)},
\]
and satisfies all the properties on the right-hand-side of \eqref{Multi-Boolean}. Conversely, any $\eta$ from the set on the right-hand-side is of this form for some analytic $f : \mf{D} \rightarrow \mf{C}^+$, determined up to an additive integer multiple of $2 \pi$.
\end{proof}

\begin{Example}
\label{Example:Cauchy}
The Cauchy distribution on $\mf{R}$ is
\[
d\mu_t(x) = \frac{1}{\pi} \frac{t}{x^2 + t^2} \,dx.
\]
Its $F$-transform is $F_{\mu_t}(z) = z + t i$, and its image $\eta_t(z) = L(F_{\mu_t}(z)) = e^{-t} z$. Then $\eta_t = \eta_{\nu_t}$, for
\[
d\nu_t(e^{ix}) = \frac{1}{2\pi} \frac{1 - e^{-2t}}{\abs{\zeta - e^{-t}}^2} \,dx.
\]
As first observed in Section 5.2 of \cite{BiaProcesses}, $\nu_t$ is a wrapping of $\mu_t$.
\end{Example}

\begin{Thm}
\label{Thm:Wrapping}
The wrapping map $W$ from Definition~\ref{Defn:Wrapping} maps $\mc{L}$ onto $\mc{ID}^{\sutimes}_\ast$. Considered as a map
\[
W : \mc{L} \rightarrow \mc{ID}^{\sutimes}_\ast,
\]
it satisfies
\[
L(F_\mu) = \eta_{W(\mu)},
\]
in other words
\[
\exp(i F_{\mu}(z)) = \eta_{W(\mu)}(e^{iz}).
\]
The pre-image of each $\nu$ is the equivalence class of measures $\mu$ modulo the equivalence relation $\mod \delta_{2 \pi}$.
\end{Thm}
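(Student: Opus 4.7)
The crux is the pointwise identity
\[
e^{iF_\mu(w)} = \eta_{W(\mu)}(e^{iw}), \qquad w \in \mf{C}^+,\ \mu \in \mc{L},
\]
which I will call $(\dagger)$. Once $(\dagger)$ is known, the rest is formal: it is exactly the statement $L(F_\mu) = \eta_{W(\mu)}$ from Proposition~\ref{Prop:A-on-functions}, and since Lemma~\ref{Lemma:Conditions} writes the right-hand side as $e^{iw}e^{if(e^{iw})}$, the characterisation \eqref{Multi-Boolean} places $W(\mu)$ in $\mc{ID}^{\sutimes}_\ast$. For surjectivity, given $\nu \in \mc{ID}^{\sutimes}_\ast$, Proposition~\ref{Prop:A-on-functions} and \eqref{Multi-Boolean} produce $F \in F_{\mc{L}}$ with $L(F) = \eta_\nu$; by Lemma~\ref{Cara} this $F$ is $F_\mu$ for a unique $\mu \in \mc{L}$, and $(\dagger)$ then forces $W(\mu) = \nu$. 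For the fiber description: if $W(\mu_1) = W(\mu_2)$, then $(\dagger)$ gives $e^{iF_{\mu_1}} = e^{iF_{\mu_2}}$, so $F_{\mu_1} - F_{\mu_2}$ is a continuous $2\pi\mf{Z}$-valued function on the connected set $\mf{C}^+$ and hence equals a single constant $2\pi n$. Since $F_\mu(z - 2\pi k) = F_\mu(z) - 2\pi k$ for $\mu \in \mc{L}$, this corresponds to $\mu_1 = \mu_2 \ast \delta_{-2\pi n}$, which by Lemma~\ref{Lemma:Modulo} is precisely the equivalence $\bmod \delta_{2\pi}$.

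To prove $(\dagger)$, I convert it into a cotangent identity. Writing $1 + \psi_\nu(z) = \int_{\mf{T}} (1 - z\zeta)^{-1} d\nu(\zeta)$ for general $\nu$, specialising to $\nu = W(\mu)$ with $\zeta = e^{-ix}$ and $z = e^{iw}$, and applying the elementary identity $(1 - e^{iu})^{-1} = \tfrac12 + \tfrac{i}{2}\cot(u/2)$ on both sides, $(\dagger)$ becomes
\[
\int_{\mf{R}} \cot\!\left(\frac{w-x}{2}\right) d\mu(x) \;=\; \cot\!\left(\frac{F_\mu(w)}{2}\right), \qquad w \in \mf{C}^+,
\]
which I call $(\ddagger)$. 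Both sides are analytic on $\mf{C}^+$, and I verify $(\ddagger)$ by matching $w$-derivatives and boundary behaviour as $\Imm w \to \infty$.

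Differentiating the left-hand side of $(\ddagger)$ under the integral yields $-\tfrac12 \int \csc^2((w-x)/2)\, d\mu(x)$. I then invoke the \emph{absolutely} convergent expansion $\csc^2(u/2) = \sum_{n \in \mf{Z}} 4(u - 2\pi n)^{-2}$ and interchange sum and integral; this interchange is legitimate because $\sum_{n} |w - x - 2\pi n|^{-2}$ is bounded uniformly in $x \in \mf{R}$ (by the classical identity $\sum_{n}[(\xi - n)^2 + s^2]^{-1} = \pi s^{-1} \sinh(2\pi s)/(\cosh(2\pi s) - \cos(2\pi \xi))$), and $\mu$ is a finite measure. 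This rewrites the derivative of the LHS as $2 \sum_{n} G_\mu'(w - 2\pi n)$. Because $\mu \in \mc{L}$ forces $F_\mu(w - 2\pi n) = F_\mu(w) - 2\pi n$ and $F_\mu'(w - 2\pi n) = F_\mu'(w)$, one has $G_\mu'(w - 2\pi n) = -F_\mu'(w)/(F_\mu(w) - 2\pi n)^2$; applying the same $\csc^2$ expansion but now centred at $F_\mu(w)$ identifies the sum as $-\tfrac12 F_\mu'(w) \csc^2(F_\mu(w)/2)$, which is exactly the $w$-derivative of the right-hand side of $(\ddagger)$. To pin down the constant of integration, I observe that $|\cot(u/2)| \leq \coth(\Imm u / 2)$ and $\cot(u/2) \to -i$ as $\Imm u \to \infty$; since $\Imm F_\mu(w) \geq \Imm w$, dominated convergence sends both sides of $(\ddagger)$ to $-i$ as $\Imm w \to \infty$, so the two analytic functions with the same derivative have the same limit at infinity and therefore coincide on all of $\mf{C}^+$.

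The main technical point is the bookkeeping in this derivative step: the Mittag--Leffler expansion of $\cot$ itself is only conditionally convergent, which is why the calculation must be executed at the level of $\csc^2$, where absolute convergence makes all the interchanges immediate. Beyond this one point, everything --- the image landing in $\mc{ID}^{\sutimes}_\ast$, surjectivity, and the fiber description --- follows formally from $(\dagger)$ together with the Carath{\'e}odory-type parameterisations of $F_{\mc{L}}$ and $\eta_{\mc{ID}^{\sutimes}_\ast}$ supplied by Lemma~\ref{Cara} and Proposition~\ref{Prop:A-on-functions}.
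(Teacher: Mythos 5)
Your argument is correct, and it reaches the key identity $\eta_{W(\mu)}(e^{iz})=e^{iF_\mu(z)}$ by a genuinely different execution than the paper's. The paper works at the boundary: it expresses the density of $W(\mu)$ as $\frac{1}{\pi}\lim_{y\downarrow 0}\sum_{n}\Imm F_\mu(x+iy)\,/\,\abs{F_\mu(x+iy)+2\pi n}^2$, resums this by the Poisson summation formula for the Cauchy kernel into $\frac{1}{2\pi}(1-e^{-2\Imm F_\mu})/\abs{1-e^{iF_\mu}}^2$, and matches it against the boundary values recovered from $\eta_\nu=L(F_\mu)$. You instead work at interior points with the transform $1+\psi_{W(\mu)}$ itself, reduce the claim to the cotangent identity $(\ddagger)$, and prove that by differentiating, invoking the absolutely convergent expansion $\csc^2(u/2)=\sum_n 4(u-2\pi n)^{-2}$ together with the $2\pi$-periodicity of $F_\mu(z)-z$, and fixing the constant through the common limit $-i$ as $\Imm w\to\infty$. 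The two proofs rest on the same underlying fact --- the partial-fraction expansion of $1/(1-e^{iu})$ is the resummed form of the Poisson summation identity for the wrapped Cauchy kernel --- but your version buys a cleaner treatment of regularity: it avoids the boundary-value limits and the limit/sum interchange implicit in the paper's density computation (and applies uniformly to atomic and singular parts without comment), at the modest cost of the derivative-and-asymptotics detour forced by the merely conditional convergence of the $\cot$ expansion. The remaining assertions --- that the image lies in $\mc{ID}^{\sutimes}_\ast$ via \eqref{Multi-Boolean}, surjectivity via Proposition~\ref{Prop:A-on-functions} and Lemma~\ref{Cara}, and the identification of the fibers with the $\bmod\ \delta_{2\pi}$ classes via Lemma~\ref{Lemma:Modulo} --- are handled essentially as in the paper, and your connectedness argument for the fiber description is complete.
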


\begin{proof}
Since
\[
d\mu(x)
= - \frac{1}{\pi} \lim_{y \downarrow 0} \Imm \frac{1}{F_\mu(x + i y)} \,dx
= \frac{1}{\pi} \lim_{y \downarrow 0} \frac{\Imm F_\mu(x + i y)}{\abs{F_\mu(x + i y)}^2} \,dx,
\]
for $\mu \in \mc{L}$
\[
d(W(\mu))(e^{ix})
= \frac{1}{\pi} \lim_{y \downarrow 0} \sum_{n \in \mf{Z}} \frac{\Imm F_\mu(x + i y + 2 \pi n)}{\abs{F_\mu(x + i y + 2 \pi n)}^2} \,dx
= \frac{1}{\pi} \lim_{y \downarrow 0} \sum_{n \in \mf{Z}} \frac{\Imm F_\mu(x + i y)}{\abs{F_\mu(x + i y) + 2 \pi n)}^2} \,dx.
\]
We now apply the Poisson summation formula for the Cauchy distribution, to get
\[
\begin{split}
d(W(\mu))(e^{ix})
& = \frac{1}{2\pi} \lim_{y \downarrow 0} \sum_{n \in \mf{Z}} e^{i n \Re F_\mu(x + i y)} e^{- \abs{n} \Imm F_\mu(x + i y)} \,dx \\
& = \frac{1}{2\pi} \lim_{y \downarrow 0} \left(1 + \sum_{n=1}^\infty \left( e^{i n F_\mu(x + i y)} + e^{- i n \overline{F_\mu (x + i y)}} \right) \right) \,dx\\
& = \frac{1}{2\pi} \lim_{y \downarrow 0} \left( \frac{1}{1 - e^{i F_\mu(x + i y)}} + \frac{1}{1 - e^{- i \overline{F_\mu (x + i y)}}} - 1 \right) \,dx \\
& = \frac{1}{2\pi} \lim_{y \downarrow 0} \frac{1 - e^{-2 \Imm F_\mu(x + i y)}}{\abs{1 - e^{i F_\mu(x + i y)}}^2} \,dx.
\end{split}
\]
On the other hand, letting $\nu$ be determined by $L(F_\mu) = \eta_{\nu}$ and $r = e^{-y}$,
\[
\begin{split}
d\nu(e^{-ix})
& = \frac{1}{2\pi} \lim_{r \uparrow 1} \Re \frac{1 + \eta_{\nu}(r e^{ix})}{1 - \eta_{\nu}(r e^{ix})} \,dx
= \frac{1}{2\pi} \lim_{y \downarrow 0} \Re \frac{1 + \eta_{\nu}(e^{-y} e^{ix})}{1 - \eta_{\nu}(e^{-y} e^{ix})} \,dx \\
& = \frac{1}{2\pi} \lim_{y \downarrow 0} \Re \frac{1 + \eta_{\nu}(e^{i (x + i y)})}{1 - \eta_{\nu}(e^{i (x + i y)})} \,dx
= \frac{1}{2\pi} \lim_{y \downarrow 0} \Re \frac{1 + e^{i F_\mu(x + i y)}}{1 - e^{i F_\mu(x + i y)}} \,dx \\
& = \frac{1}{2\pi} \lim_{y \downarrow 0} \frac{1 - e^{-2 \Imm F(x + i y)}}{\abs{1 - e^{i F_\mu(x + i y)}}^2} \,dx.
\end{split}
\]
Therefore $\nu = W(\mu)$. Finally, for the last statement, we note that $F_{\delta_{2 \pi} \uplus \mu }(z) = F_\mu(z) + 2 \pi$.
\end{proof}

\begin{Example}
\label{Example:Counter}
Let $\mu = \frac{1}{2} (\delta_{-2 \pi} + \delta_{2 \pi})$ be a Bernoulli distribution. Then $W(\mu) = \delta_1$. Also, $\delta_1 \boxtimes \delta_1 = \delta_1$, while $\mu \boxplus \mu$ is an arcsine distribution. Thus $W(\mu) \boxtimes W(\mu) \neq W(\mu \boxplus \mu)$.
\end{Example}

\begin{Thm}
\label{Thm:Homomorphism}
For any $\mu_1, \mu_2 \in \mc{L}$,
\[
W(\mu_1) \circlearrowleft W(\mu_2) = W(\mu_1 \rhd \mu_2),
\]
\[
W(\mu_1) \utimes W(\mu_2) = W(\mu_1 \uplus \mu_2),
\]
\[
W(\mu_1) \boxtimes W(\mu_2) = W(\mu_1 \boxplus \mu_2),
\]
and
\[
W(\mu_2) \msubord W(\mu_1) = W(\mu_2 \boxright \mu_1).
\]
Conversely, for any $\nu_1, \nu_2 \in \mc{ID}_*^{\sutimes}$,
\[
W^{-1}(\nu_1 \circlearrowleft \nu_2) = W^{-1}(\nu_1) \rhd W^{-1}(\nu_2) \mod \delta_{2 \pi},
\]
\[
W^{-1}(\nu_1 \utimes \nu_2) = W^{-1}(\nu_1) \uplus W^{-1}(\nu_2) \mod \delta_{2 \pi},
\]
and
\[
W^{-1}(\nu_1 \boxtimes \nu_2) = W^{-1}(\nu_1) \boxplus W^{-1}(\nu_2) \mod \delta_{2 \pi}.
\]
Thus $(\mc{L} \mod \delta_{2\pi})$ and $\mc{ID}^{\sutimes}_*$ are isomorphic as semigroups with the respect to the additive, respectively, multiplicative, Boolean, free, and monotone convolutions.
\end{Thm}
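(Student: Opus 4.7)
The plan is to reduce everything to the master identity $\exp(i F_\mu(z)) = \eta_{W(\mu)}(e^{iz})$ from Theorem~\ref{Thm:Wrapping}, and push each transform identity from Section~\ref{Section:Background} through it. Writing $F_j = F_{\mu_j}$ and $\eta_j = \eta_{W(\mu_j)}$ to keep the bookkeeping light, each of the four forward identities becomes a direct calculation; the converse statements will then come for free from the surjectivity of $W$.

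For the monotone case I would substitute $F_{\mu_1 \rhd \mu_2}(z) = F_1(F_2(z))$ into the master identity to obtain
\[
\eta_{W(\mu_1 \rhd \mu_2)}(e^{iz}) = \exp(i F_1(F_2(z))) = \eta_1(e^{i F_2(z)}) = \eta_1(\eta_2(e^{iz})),
\]
which by \eqref{Convolutions} is $\eta_{W(\mu_1) \circlearrowleft W(\mu_2)}(e^{iz})$. For the Boolean case I would use $F_{\mu_1 \uplus \mu_2}(z) - z = (F_1(z) - z) + (F_2(z) - z)$; exponentiating and dividing both sides by $e^{iz}$ matches the multiplicative Boolean recipe $\eta/z = (\eta_1/z)(\eta_2/z)$ from \eqref{Convolutions}.

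The main obstacle will be the free case, since $\Sigma_\mu$ is defined via $\eta_\mu^{-1}$, and inverting the master identity forces a choice of a branch of logarithm. On a punctured neighborhood of $0 \in \mf{D}$ where a holomorphic branch of $\log$ exists, the master identity rearranges to $\eta_{W(\mu)}^{-1}(u) = \exp(i F_\mu^{-1}(-i \log u))$, so $\Sigma_{W(\mu)}(u) = \exp(i F_\mu^{-1}(-i \log u))/u$. Substituting $F_{\mu_1 \boxplus \mu_2}^{-1}(w) = F_1^{-1}(w) + F_2^{-1}(w) - w$ and using $e^{-\log u} = 1/u$ to absorb one factor of $u$ yields $\Sigma_{W(\mu_1 \boxplus \mu_2)} = \Sigma_{W(\mu_1)} \Sigma_{W(\mu_2)}$ on a neighborhood of $0$; analytic continuation then extends this to the full common domain. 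The subordination identity falls out of the same setup:
\[
\eta_1^{-1}(\eta_{W(\mu_1 \boxplus \mu_2)}(e^{iz})) = \eta_1^{-1}(\exp(i F_{\mu_1 \boxplus \mu_2}(z))) = \exp(i F_1^{-1}(F_{\mu_1 \boxplus \mu_2}(z))) = \exp(i F_{\mu_2 \boxright \mu_1}(z)),
\]
so $W(\mu_2) \msubord W(\mu_1) = W(\mu_2 \boxright \mu_1)$.

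For the converse direction I would use the surjectivity of $W : \mc{L} \to \mc{ID}^{\sutimes}_\ast$ established in Theorem~\ref{Thm:Wrapping}: given $\nu_i \in \mc{ID}^{\sutimes}_\ast$, pick any $\mu_i \in W^{-1}(\nu_i)$ and invoke the forward direction. The ``$\mod \delta_{2\pi}$'' ambiguity in the converse statements is precisely the description of the fibers of $W$ from Theorem~\ref{Thm:Wrapping} (namely $W(\mu) = W(\delta_{2\pi} \uplus \mu)$), and Lemma~\ref{Lemma:Modulo} ensures that the three additive convolutions agree on $\mc{L}$ modulo $\delta_{2\pi}$ so that ``$\mu_1 \boxplus \mu_2 \mod \delta_{2\pi}$'' is unambiguous.
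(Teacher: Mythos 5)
Your proposal is correct and follows essentially the same route as the paper: everything is funneled through the master identity $\exp(i F_\mu(z)) = \eta_{W(\mu)}(e^{iz})$, with the Boolean case via $\exp(i(F_\mu(z)-z)) = \eta_{W(\mu)}(e^{iz})/e^{iz}$, the free case via the inverted identity $\exp(i\phi_\mu(z)) = \Sigma_{W(\mu)}(e^{iz})$ (the paper tracks the branch ambiguity as a $+2\pi n$ shift rather than a choice of logarithm, and for subordination applies $\eta_{W(\mu_1)}$ and invokes its injectivity near $0$ instead of applying $\eta_{W(\mu_1)}^{-1}$, but these are cosmetic differences), and the converse statements deduced from surjectivity of $W$ together with Lemma~\ref{Lemma:Modulo}.
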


\begin{proof}
\[
\begin{split}
\eta_{W(\mu_1) \circlearrowleft W(\mu_2)}(e^{iz}))
& = \eta_{W(\mu_1)}(\eta_{W(\mu_2)}(e^{iz}))
= \eta_{W(\mu_1)}(e^{i F_{\mu_2}(z)}) \\
& = \exp(i F_{\mu_1}(F_{\mu_2}(z)))
= \exp(i F_{\mu_1 \rhd \mu_2}(z))
= \eta_{W(\mu_1 \rhd \mu_2)}(e^{iz}),
\end{split}
\]
which implies the first identity. The second identity follows from the observation that
\[
\exp(i (F_{\mu}(z) - z)) = \frac{\eta_{W(\mu)}(e^{iz})}{e^{iz}},
\]
by a similar argument. On a neighborhood of infinity,
\[
\exp(i F_{\mu}^{-1}(z + 2 \pi n)) = \eta_{W(\mu)}^{-1}(e^{iz}),
\]
\[
\exp(i F_{\mu}^{-1}(z)) = \eta_{W(\mu)}^{-1}(e^{i(z - 2 \pi n)}) = \eta_{W(\mu)}^{-1}(e^{i z}),
\]
\[
\exp(i \varphi_{\mu}(z)) = \Sigma_{W(\mu)}(e^{iz}),
\]
which implies the third identity. Finally,
\[
\begin{split}
\eta_{W(\mu_1)}(\eta_{W(\mu_2) \msubord W(\mu_1)}(e^{i z}))
& = \eta_{W(\mu_1) \boxtimes W(\mu_2)}(e^{i z})
= \eta_{W(\mu_1 \boxplus \mu_2)}(e^{i z}) \\
& = \exp(i F_{\mu_1 \boxplus \mu_2}(z))
= \exp(i F_{\mu_1}(F_{\mu_2 \boxright \mu_1}(z)) \\
& = \eta_{W(\mu_1)}(\eta_{W(\mu_2 \boxright \mu_1)}(e^{i z})).
\end{split}
\]
Since $\eta_{W(\mu_1)}$ is injective on a sufficiently small disk, the result for subordination distributions follows.

The second set of statements follows from Lemma~\ref{Lemma:Modulo}.
\end{proof}

\begin{Prop}
\label{Prop:ID-preimage}
Recall that multiplicative convolution powers are in general multi-valued. This ambiguity can be explained using transformation $W$. Let $\mu \in \mc{L}$. Then for any $t \geq 0$,
\[
W(\mu^{\uplus t}) = W(\mu)^{\sutimes t},
\]
and all values of $W(\mu)^{\sutimes t}$ arise in this way. Similarly, whenever $\mu^{\boxplus t}$ is defined,
\[
W(\mu^{\boxplus t}) = W(\mu)^{\boxtimes t},
\]
and all values of $W(\mu)^{\boxtimes t}$ arise in this way.

$W$ maps $\boxplus$, $\uplus$, and $\rhd$-infinitely divisible distributions in $\mc{L}$, and the corresponding semigroups, to their multiplicative counterparts. If $\nu \in \mc{ID}^\boxtimes_\ast$, then every element of $W^{-1}(\nu)$ is in $\mc{L} \cap \mc{ID}^\boxplus$. If $\nu \in \mc{ID}^\circlearrowright_\ast$, then there is $\mu \in \mc{L} \cap \mc{ID}^\rhd_\ast$ such that $W(\mu) = \nu$.
\end{Prop}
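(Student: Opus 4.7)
The plan is to tackle the five assertions in sequence, relying throughout on the two core identities $\exp(i(F_\mu(z)-z)) = \eta_{W(\mu)}(e^{iz})/e^{iz}$ and $\exp(i\phi_\mu(z)) = \Sigma_{W(\mu)}(e^{iz})$ that appear inside the proof of Theorem~\ref{Thm:Homomorphism}.

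For the two power identities, I raise the Boolean identity above to the $t$-th power and invoke $F_{\mu^{\uplus t}}(z) - z = t(F_\mu(z) - z)$; this realizes $W(\mu^{\uplus t})$ as one branch of $W(\mu)^{\sutimes t}$. Every other branch is attained by replacing $\mu$ with its $W$-fibre companion $\mu \uplus \delta_{2\pi n}$, which shifts $F_\mu$ by $2\pi n$ and multiplies the $t$-th power by $e^{2\pi i n t}$; varying $n \in \mf{Z}$ exhausts all branches. The free case is identical, via $\Sigma_{W(\mu)}$ and $\phi_{\mu^{\boxplus t}} = t\phi_\mu$.

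For the first half of part~(c), the Boolean and free statements follow immediately from the power identities and Lemma~\ref{Lemma:L-convolution}, which keeps the full additive semigroup inside $\mc{L}$. For the monotone case, I use Lemma~\ref{Lemma:Monotone-semigroups} to write the $\rhd$-generator as $\Phi(z) = \tilde{\Phi}(e^{iz})$; differentiating $\eta_{W(\mu_t)}(e^{iz}) = \exp(i F_{\mu_t}(z))$ in $t$ and substituting $w = e^{iz}$ yields
\[
\partial_t \eta_{W(\mu_t)}(w) = i\,\eta_{W(\mu_t)}(w)\,\tilde{\Phi}(\eta_{W(\mu_t)}(w)),
\]
which is the Berkson--Porta generator equation \eqref{Monotone-mult-generator} with $A(w) = iw\tilde{\Phi}(w)$; substituting the Carathéodory representation of $\tilde\Phi$ from Lemma~\ref{Cara} reproduces the form of $A^{\beta,\sigma}$ exactly.

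For the $\boxtimes$-ID converse, fix $\nu \in \mc{ID}^{\boxtimes}_\ast$ with $n$-th $\boxtimes$-roots $\nu_n \in \mc{ID}^{\boxtimes}_\ast \subseteq \mc{ID}^{\sutimes}_\ast$ (inclusion by Lemma~\ref{Lemma:Mult-ID}), pick $\mu \in W^{-1}(\nu)$ and $\mu_n \in W^{-1}(\nu_n)$, and apply Theorem~\ref{Thm:Homomorphism} to get $W(\mu_n^{\boxplus n}) = W(\mu)$, so $\mu_n^{\boxplus n} = \mu \boxplus \delta_{2\pi k_n}$ for some $k_n \in \mf{Z}$; then $\mu_n \boxplus \delta_{-2\pi k_n / n}$ is a genuine $n$-th $\boxplus$-root of $\mu$, so $\mu \in \mc{ID}^{\boxplus}$. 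Every other element of $W^{-1}(\nu)$ differs from $\mu$ by $\delta_{2\pi k}$, which preserves $\boxplus$-infinite divisibility since it only shifts $\phi_\mu$ by a real constant. For the $\circlearrowright$-ID converse, I invert the preceding generator computation: given $A^{\beta,\sigma}$, define the Carathéodory function $\tilde{\Phi}(w) = -iA^{\beta,\sigma}(w)/w = -\beta + i\int_{\mf{T}} (1+\zeta w)/(1-\zeta w)\,d\sigma(\zeta)$; then $\Phi(z) = \tilde{\Phi}(e^{iz})$ is $2\pi$-periodic, maps $\mf{C}^+$ into itself, and satisfies $\Phi(z)/z \to 0$ at $i\infty$, so it is a legitimate $\rhd$-generator whose semigroup lies in $\mc{L}$ by the converse direction of Lemma~\ref{Lemma:Monotone-semigroups}, and the paragraph-three generator correspondence gives $W(\mu_1) = \nu$. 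The principal obstacle is the monotone bookkeeping: verifying that the Carathéodory lift through $w = e^{iz}$ is indeed a $\rhd$-generator, and that the identification $A(w) = iw\tilde{\Phi}(w)$ is a bijection between $2\pi$-periodic $\rhd$-generators and the Berkson--Porta generators of $\mc{ID}^{\circlearrowright}_\ast$.
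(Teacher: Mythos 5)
Your proposal is correct, and for most of the statement it follows the same route as the paper: the power identities via $\exp(i(F_\mu - z))$ and $\exp(i\phi_\mu)$ with the branch ambiguity accounted for by $\mu \mapsto \mu \uplus \delta_{2\pi n}$, and the monotone direction (both ways) via the generator correspondence $\Phi^+(z) = -i\Phi^\times(e^{iz})$, checking $2\pi$-periodicity, the range condition, and the normalization at $i\infty$ exactly as the paper does. The one place you genuinely diverge is the claim that $\nu \in \mc{ID}^\boxtimes_\ast$ forces $W^{-1}(\nu) \subset \mc{ID}^\boxplus$: the paper argues analytically, noting that $\Sigma_\nu$ extends to a map $\mf{D} \to \mf{C}\setminus\mf{D}$, so $e^{i\phi_\mu(z)} = \Sigma_\nu(e^{iz})$ is a zero-free analytic function on the simply connected domain $\mf{C}^+$, whence $\phi_\mu = -i\log\Sigma_\nu(e^{iz}) + 2\pi k$ is a well-defined analytic map into $\mf{C}^-$ and $\mu$ is a $\delta_{2\pi k}$-shift of a $\boxplus$-ID measure. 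Your argument instead pulls back the $n$-th $\boxtimes$-roots $\nu^{\boxtimes 1/n}$ (which stay in $\mc{ID}^\boxtimes_\ast \subset \mc{ID}^{\sutimes}_\ast$ by their L\'evy--Khinchin form and Lemma~\ref{Lemma:Mult-ID}), uses the homomorphism property to get $\mu_n^{\boxplus n} = \mu \boxplus \delta_{2\pi k_n}$, and corrects by $\delta_{-2\pi k_n/n}$ to produce genuine $n$-th $\boxplus$-roots of $\mu$. Both are valid; your version is more elementary and uses only the definition of infinite divisibility plus the already-established homomorphism, while the paper's monodromy argument is shorter and has the side benefit of exhibiting the Voiculescu transform of $\mu$ explicitly as a branch of $-i\log\Sigma_\nu(e^{iz})$, which ties in directly with the canonical-pair computations of Proposition~\ref{Prop:BP-intertwiner}.
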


\begin{proof}
The proof of the first two statements is similar to Theorem~\ref{Thm:Homomorphism}. For the ambiguity, note that if $\nu = W(\mu)$, then also $\nu = W(\mu \uplus \delta_{2 \pi n})$, and so $\nu^{\sutimes t} = W(\mu^{\uplus t}) \utimes \delta_{e^{2 \pi i n t}}$ for different $n \in \mf{Z}$. These are precisely all the possible values of $\nu^{\sutimes t}$. The argument for $\nu^{\boxtimes t}$ is similar. The statement about convolution semigroups also follows as in Theorem~\ref{Thm:Homomorphism}.

Let $\nu = W(\mu)$ with $\nu \in \mc{ID}^\boxtimes_\ast$. Then $\Sigma_\nu$ has an analytic continuation to a map $\mf{D} \rightarrow \mf{C} \setminus \mf{D}$. So $e^{i \phi_\mu(z)} = \Sigma_\nu(e^{i z})$ has an analytic continuation to a map $\mf{C}^+ \rightarrow \mf{C} \setminus \mf{D}$. Thus it is an analytic map from a simply connected domain to a domain not containing zero. It follows that
\[
\phi_\mu = - i \log \Sigma_\nu(e^{i z}) + 2 \pi k
\]
can be defined through analytic continuation, with values in $\mf{C}^-$. Thus $\mu = \widetilde{\mu} \boxplus \delta_{2 \pi k}$ for some $\widetilde{\mu} \in \mc{ID}^\boxplus$. But this implies $\mu \in \mc{ID}^\boxplus$.

Now suppose $\nu \in \mc{ID}^\circlearrowright_\ast$, so for some analytic generator $\Phi^\times : \mf{D} \rightarrow i \mf{C}^+$, \[
\eta_{\nu_t}(z) \Phi^\times(\eta_{\nu_t}(z)) = \frac{d}{dt} \eta_{\nu_t}(z),
\]
with $\nu_1 = \nu$. Define analytic $\Phi^+ : \mf{C}^+ \rightarrow \mf{C}^+$ by
\begin{equation}
\label{Phi-plus-times}
\Phi^+(z) = - i \Phi^\times(e^{iz}).
\end{equation}
Then $\Phi^+$ is $2 \pi$-periodic, and
\[
\lim_{y \uparrow \infty} \frac{\Phi^+(x + i y)}{i y}  = \lim_{y \uparrow \infty} \frac{- i \Phi^\times(0)}{y} = 0.
\]
So $\Phi^+$ generates a $\rhd$-semigroup $\set{\mu_t} \subset \mc{L}$ by
\[
\Phi^+(F_{\mu_t}(z)) = \frac{d}{dt} F_{\mu_t}(z).
\]
Moreover,
\[
\frac{d}{dt} e^{i F_{\mu_t}(z)}
= i e^{i F_{\mu_t}(z)} \frac{d}{dt} F_{\mu_t}(z)
= i e^{i F_{\mu_t}(z)} \Phi^+(F_{\mu_t}(z))
= e^{i F_{\mu_t}(z)} \Phi^\times(e^{i F_{\mu_t}(z)}),
\]
and so $e^{i F_{\mu_t}(z)} = \eta_{\nu_t}(e^{i z})$ and $\nu_t = W(\mu_t)$.
\end{proof}

\begin{Cor}
$\mc{ID}^{\sutimes}_\ast$ is closed under the three multiplicative convolution operations $\utimes, \boxtimes, \circlearrowright$, under the subordination operation $\msubord$, and under multiplicative Boolean convolution powers. Whenever $\mu \in \mc{ID}^{\sutimes}_\ast$ and $\nu = \mu^{\boxtimes t}$, then also $\nu \in \mc{ID}^{\sutimes}_\ast$.
\end{Cor}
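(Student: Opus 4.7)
The plan is to transport the closure statements from the additive side (already established in Lemma~\ref{Lemma:L-convolution}) to the multiplicative side using the semigroup isomorphism $W : \mc{L} \to \mc{ID}^{\sutimes}_\ast$ furnished by Theorem~\ref{Thm:Wrapping} and Theorem~\ref{Thm:Homomorphism}. The corollary is essentially a bookkeeping consequence: each multiplicative closure assertion corresponds under $W$ to an additive closure already proved.

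For the three binary operations and subordination, I would start with arbitrary $\nu_1, \nu_2 \in \mc{ID}^{\sutimes}_\ast$, pick preimages $\mu_1, \mu_2 \in \mc{L}$ (which exist by Theorem~\ref{Thm:Wrapping}), and apply Lemma~\ref{Lemma:L-convolution} to see that $\mu_1 \uplus \mu_2$, $\mu_1 \boxplus \mu_2$, $\mu_1 \rhd \mu_2$, and $\mu_2 \boxright \mu_1$ all lie in $\mc{L}$. Theorem~\ref{Thm:Homomorphism} then identifies their images under $W$ with $\nu_1 \utimes \nu_2$, $\nu_1 \boxtimes \nu_2$, $\nu_1 \circlearrowleft \nu_2$, and $\nu_2 \msubord \nu_1$, respectively, placing each in $W(\mc{L}) = \mc{ID}^{\sutimes}_\ast$.

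For the convolution powers, the Boolean multiplicative power $\nu^{\sutimes t}$ is handled by lifting $\nu$ to some $\mu \in \mc{L}$, forming $\mu^{\uplus t}$ (defined for every $t \geq 0$ and in $\mc{L}$ by Lemma~\ref{Lemma:L-convolution}), and applying $W$; Proposition~\ref{Prop:ID-preimage} identifies $W(\mu^{\uplus t})$ with $\nu^{\sutimes t}$ and, crucially, asserts that every branch of the multi-valued $t$-th power arises this way. The free power $\nu^{\boxtimes t}$ is treated identically, substituting $\boxplus$ for $\uplus$ and invoking the corresponding half of Proposition~\ref{Prop:ID-preimage}. No step is a substantive obstacle; the only point worth flagging is exactly this multi-valuedness, since closure must be checked for \emph{every} branch of $\nu^{\boxtimes t}$ or $\nu^{\sutimes t}$ rather than for a distinguished one, and it is Proposition~\ref{Prop:ID-preimage} that supplies this.
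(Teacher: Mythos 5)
Your proof is correct, and for the substantive parts of the corollary---closure under $\boxtimes$, under $\msubord$, and under free convolution powers---it is exactly the paper's argument: lift to $\mc{L}$ via Theorem~\ref{Thm:Wrapping}, apply Lemma~\ref{Lemma:L-convolution}, and push back down with Theorem~\ref{Thm:Homomorphism} and Proposition~\ref{Prop:ID-preimage}. You are also right to single out the multi-valuedness of the powers as the one point needing care, and Proposition~\ref{Prop:ID-preimage} does supply the needed statement that every branch of $\nu^{\boxtimes t}$ (resp.\ $\nu^{\sutimes t}$) is of the form $W(\mu^{\boxplus t})$ (resp.\ $W(\mu^{\uplus t})$) for a suitable preimage $\mu$. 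The only divergence is that for Boolean convolution, Boolean powers, and monotone convolution the paper does not bother with the lift at all: it observes that the characterization \eqref{Multi-Boolean} of $\eta_{\mc{ID}^{\sutimes}_\ast}$ as the self-maps $\eta$ of $\mf{D}$ with $\eta'(0)\neq 0$ vanishing only at $0$ is manifestly stable under taking products of $\eta(z)/z$, powers of $\eta(z)/z$, and compositions, which disposes of those three cases in one line directly from \eqref{Convolutions}. Your uniform route through $W$ proves the same thing and has the virtue of treating all cases identically, at the cost of invoking the heavier machinery (Theorems~\ref{Thm:Wrapping} and \ref{Thm:Homomorphism}) where an elementary observation suffices; the paper's hybrid is slightly more economical but otherwise equivalent.
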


\begin{proof}
Closure under Boolean convolution, Boolean powers, and monotone convolution follows immediately from definition \eqref{Convolutions} and characterization \eqref{Multi-Boolean}.

The claims for the free convolution, and for subordination, follow by combining Lemma~\ref{Lemma:L-convolution} with Theorem~\ref{Thm:Homomorphism} and Proposition~\ref{Prop:ID-preimage}. For example, if $\nu_1, \nu_2 \in \mc{ID}^{\sutimes}_\ast$, and $\mu_1, \mu_2 \in \mc{L}$ are chosen so that $\nu_1 = W(\mu_1)$, $\nu_2 = W(\mu_2)$, then $\mu_1 \boxplus \mu_2 \in \mc{L}$ and $\nu_1 \boxtimes \nu_2 = W(\mu_1 \boxplus \mu_2) \in \mc{ID}^{\sutimes}_\ast$.
\end{proof}

\begin{Remark}
\label{Remark:Cebron}
In \cite{Cebron-Multiplicative}, C{\'e}bron defined a homomorphism
\[
\mb{e}_\boxplus : \mc{ID}^\boxplus \rightarrow \mc{ID}^{\boxtimes}
\]
which satisfies
\begin{equation}
\label{BP}
W = \mb{e}_\ast = \mc{BP}_{\boxtimes \rightarrow \circledast} \circ \mb{e}_\boxplus \circ \mc{BP}_{\ast \rightarrow \boxplus}.
\end{equation}
Here $\mc{BP}$ denote various Bercovici-Pata maps, and $\mb{e}_\ast : \mc{ID}^\ast \rightarrow \mc{ID}^\circledast$ is simply $\mb{e}_\ast = W$ (in fact, C{\'e}bron used the counterclockwise wrapping, but his arguments are not affected by this slight change of definition). He also proved that for $\mu \in \mc{ID}^\boxplus$,
\[
\mb{e}_\boxplus (\mu) = \lim_{n \rightarrow \infty} \left( \mb{e}_\ast(\mu^{\boxplus \frac{1}{n}}) \right)^{\boxtimes n}.
\]
By combining this with Theorem~\ref{Thm:Homomorphism}, it follows that for $\mu \in \mc{L} \cap \mc{ID}^\boxplus$, $\mb{e}_\boxplus(\mu) = W(\mu)$, and in particular in this case the limit above is unnecessary.

A calculation in Section~2.3 and Proposition~3.1 of \cite{Cebron-Multiplicative} also shows that for $(\alpha, \tau)$ appearing in the L{\'e}vy-Khinchin representation \eqref{LH-add} of $\mu$, and $(\gamma, \sigma)$ appearing in the (multiplicative) L{\'e}vy-Khinchin representation \eqref{LH-Mult} of $W(\mu)$, we have the relations
\begin{equation}
\label{Wrapping-Levy}
\frac{1}{1 - \Re(\zeta)} \,d\sigma(\zeta)|_{\mf{T} \setminus \set{1}} = W \left( \frac{x^2 + 1}{x^2} \,d\tau(x) |_{\mf{R} \setminus \set{0}} \right), \quad \sigma(\set{1}) = \frac{1}{2} \tau(\set{0})
\end{equation}
and
\begin{equation}
\label{Wrapping-Levy2}
\gamma = e^{-i \alpha} \exp \left(- i \int_{\mf{R} \setminus \set{0}} \left( \sin x - \frac{x}{1 + x^2} \right) \frac{1 + x^2}{x^2} \,d\tau(x) \right).
\end{equation}
These formulas look more natural in the language of canonical triples rather than pairs.
\end{Remark}

\begin{Prop}
\label{Prop:BP-intertwiner}
Define the map
\[
\set{(\alpha, \tau) : \alpha \in \mf{R}, \tau \text{ a finite measure on } \mf{R}, \nu^{\alpha, \tau}_{\uplus} \in \mc{L}}
\rightarrow \set{(\gamma, \sigma) : \gamma \in \mf{T}, \sigma \text{ a finite measure on } \mf{T}}
\]
via the relations \eqref{Wrapping-Levy} and \eqref{Wrapping-Levy2}. Then
\begin{equation}
\label{Sigma-Tau}
e^{- i \alpha} \exp \left( i \int_{\mf{R}} \frac{1 + x z}{x - z} \,d\tau(t) \right)
= \gamma \exp \left( - \int_{\mf{T}} \frac{1 + \zeta e^{iz}}{1 - \zeta e^{iz}} \,d\sigma(\zeta) \right).
\end{equation}
For such $(\alpha, \tau)$ we have the relations
\begin{align}
\label{W-ast}
W(\nu_{\ast}^{\alpha, \tau}) & = \nu_{\circledast}^{\gamma, \sigma}, \\
\label{W-box}
W(\nu_{\boxplus}^{\alpha, \tau}) & = \nu_{\boxtimes}^{\gamma, \sigma}, \\
\label{W-u}
W(\nu_{\uplus}^{\alpha, \tau}) & = \nu_{\sutimes}^{\gamma, \sigma},
\end{align}
and
\begin{equation}
\label{W-rhd}
W(\nu_{\rhd}^{\alpha, \tau}) = \nu_{\circlearrowright}^{\beta, \sigma},
\end{equation}
where
\begin{equation}
\label{Beta}
- i \alpha + i \int_{\mf{R}} \frac{1 + x z}{x-z} \,d\tau(x)
= - i \beta - \int_{\mf{T}} \frac{1 + \zeta e^{i z}}{1 - \zeta e^{i z}} \,d\sigma(\zeta).
\end{equation}
As a result, $W$ intertwines the Bercovici-Pata maps (the multiplicative ones are not bijections)
\begin{align}
W \circ \mc{BP}_{\boxplus \rightarrow \ast} & = \mc{BP}_{\boxtimes \rightarrow \circledast} \circ W \qquad \text{ on } \mc{L} \cap \mc{ID}^\boxplus, \label{BP:free} \\
W \circ \mc{BP}_{\uplus \rightarrow \ast} & = \mc{BP}_{\sutimes \rightarrow \circledast} \circ W \qquad \text{ on } \mc{L}, \label{BP-Boolean}\\
W \circ \mc{BP}_{\rhd \rightarrow \ast} & = \mc{BP}_{\circlearrowright \rightarrow \circledast} \circ W \qquad \text{ on } \mc{L} \cap \mc{ID}^\rhd \label{BP-Monotone}
\end{align}
as well as $\mc{BP}_{\boxplus \rightarrow \uplus}$, $\mc{BP}_{\uplus \rightarrow \boxplus}$, $\mc{BP}_{\rhd \rightarrow \boxplus}$, $\mc{BP}_{\rhd \rightarrow \uplus}$, on the appropriate subsets of $\mc{L}$, with their multiplicative counterparts.
\end{Prop}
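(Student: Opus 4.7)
The plan is to first observe that \eqref{Sigma-Tau}, multiplied by $e^{iz}$, reads exactly
\[
\exp(iF_{\nu_\uplus^{\alpha,\tau}}(z)) = \eta_{\nu_\sutimes^{\gamma,\sigma}}(e^{iz}),
\]
so by the defining relation of $W$ in Theorem~\ref{Thm:Wrapping} it is logically equivalent to the Boolean image identity \eqref{W-u}. I would therefore prove \eqref{Sigma-Tau} and \eqref{W-u} simultaneously. Theorem~\ref{Thm:Wrapping} already gives $W(\nu_\uplus^{\alpha,\tau}) \in \mc{ID}^\sutimes_\ast$, so by the second bijection in Lemma~\ref{Cara} this measure equals $\nu_\sutimes^{\gamma^\ast,\sigma^\ast}$ for a unique pair; the task is then to check $(\gamma^\ast,\sigma^\ast) = (\gamma,\sigma)$ as specified by \eqref{Wrapping-Levy}--\eqref{Wrapping-Levy2}. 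Computing
\[
\eta_{W(\nu_\uplus^{\alpha,\tau})}(e^{iz}) = \exp(iF_{\nu_\uplus^{\alpha,\tau}}(z)) = e^{iz}\,e^{-i\alpha}\exp\left(i \int_{\mf{R}} \frac{1+xz}{x-z}\, d\tau(x)\right)
\]
and comparing term-by-term with the Nevanlinna parametrization of $\eta_{\nu_\sutimes^{\gamma^\ast,\sigma^\ast}}$, the measure $\sigma^\ast$ is extracted from imaginary boundary values via Poisson summation over the $2\pi\mf{Z}$-shifts (exactly as in the proof of Theorem~\ref{Thm:Wrapping}), yielding \eqref{Wrapping-Levy}; the unimodular $\gamma^\ast$ is then determined by the constant terms, yielding \eqref{Wrapping-Levy2}.

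With \eqref{Sigma-Tau} and \eqref{W-u} in hand, the remaining three image identities each follow from an analytic relation already established in the paper. For \eqref{W-box}: apply $\Sigma_{W(\mu)}(e^{iz}) = \exp(i\phi_\mu(z))$ (proved in the derivation of Theorem~\ref{Thm:Homomorphism}) to $\mu = \nu_\boxplus^{\alpha,\tau}$, substitute $\phi_\mu(z) = \alpha + \int \frac{1+xz}{z-x}\,d\tau$, and invoke \eqref{Sigma-Tau}. For \eqref{W-rhd}: Proposition~\ref{Prop:ID-preimage} already showed that if $\set{\mu_t}$ is a $\rhd$-semigroup in $\mc{L}$ with generator $\Phi^+$, then $\set{W(\mu_t)}$ is a $\circlearrowright$-semigroup whose multiplicative generator $\Phi^\times$ satisfies $\Phi^+(z) = -i\Phi^\times(e^{iz})$; substituting the L\'evy-Khinchin forms \eqref{LH-Monotone} for $\Phi^+$ and \eqref{Monotone-mult-generator} for $\Phi^\times$, and multiplying both sides by $i$, produces precisely \eqref{Beta}. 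For \eqref{W-ast}: follow the calculation of C\'ebron cited in Remark~\ref{Remark:Cebron}; compute
\[
\int_{\mf{T}} \zeta^p \, d(W\nu_\ast^{\alpha,\tau})(\zeta) = \int_{\mf{R}} e^{-ipx} \, d\nu_\ast^{\alpha,\tau}(x) = \mc{F}_{\nu_\ast^{\alpha,\tau}}(-p)
\]
from \eqref{LH-add}, apply \eqref{Wrapping-Levy} termwise to the L\'evy measure on $\mf{R}\setminus\set{0}$, and identify the resulting expression with the right-hand side of \eqref{LH-Mult} via \eqref{Wrapping-Levy2}. The atom contributions at $x=0$ and $\zeta=1$ match by asymptotic expansion (both integrands limit to $-p^2$ at the respective singular points), with the factor $1/2$ absorbed by $\sigma(\set{1}) = \tau(\set{0})/2$.

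Finally, the Bercovici-Pata intertwinings \eqref{BP:free}--\eqref{BP-Monotone} are automatic: by definition each $\mc{BP}$ map preserves the parameter pair $(\alpha,\tau)$ (respectively $(\gamma,\sigma)$), and the four image identities \eqref{W-ast}--\eqref{W-rhd} together assert that $W$ realizes one and the same correspondence $(\alpha,\tau)\leftrightarrow(\gamma,\sigma)$ regardless of which convolution is used, so the claimed diagrams commute on the relevant subsets of $\mc{L}$. The same reasoning handles the additive-to-additive maps $\mc{BP}_{\boxplus\to\uplus}$, $\mc{BP}_{\uplus\to\boxplus}$, $\mc{BP}_{\rhd\to\boxplus}$, and $\mc{BP}_{\rhd\to\uplus}$. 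The main obstacle is the parameter identification in the first paragraph: the Poisson-summation part (extracting $\sigma$) essentially repeats the computation in the proof of Theorem~\ref{Thm:Wrapping}, but the careful bookkeeping at the matched pair of singularities --- the simple pole of $\frac{1+xz}{x-z}$ at $x=0$ and the essential singularity of $\frac{1+\zeta e^{iz}}{1-\zeta e^{iz}}$ at $\zeta=1$ --- requires a residue/L'H\^opital expansion and is the source of both $\sigma(\set{1}) = \frac{1}{2}\tau(\set{0})$ and the odd-antisymmetric correction $\sin x - x/(1+x^2)$ appearing in the formula for $\gamma$.
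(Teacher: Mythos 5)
Your proposal is correct in outline but organizes the proof in essentially the reverse order from the paper, and the difference is worth spelling out. The paper's proof leans entirely on C\'ebron's results: it first obtains \eqref{BP:free} and \eqref{W-ast} from the identity $W = \mc{BP}_{\boxtimes \rightarrow \circledast} \circ \mb{e}_\boxplus \circ \mc{BP}_{\ast \rightarrow \boxplus}$ and from the L\'evy--Khinchin computation in \cite{Cebron-Multiplicative} (quoted as \eqref{Wrapping-Levy}--\eqref{Wrapping-Levy2} in Remark~\ref{Remark:Cebron}), combines these to get \eqref{W-box}, reads off \eqref{Sigma-Tau} as the exponentiated form of $W(\nu_\boxplus^{\alpha,\tau}) = \nu_\boxtimes^{\gamma,\sigma}$, and only then reinterprets the same identity to deduce \eqref{W-u}; the monotone case is handled exactly as you do, via the generator relation \eqref{Phi-plus-times}. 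You instead propose to establish the Boolean identity \eqref{W-u} (equivalently \eqref{Sigma-Tau}) first by a direct Herglotz-representation match, and to derive \eqref{W-box} and \eqref{W-ast} from it. This is a legitimate and arguably more self-contained route --- it makes \eqref{Sigma-Tau} the primitive fact rather than a corollary of the free case --- and your derivations of \eqref{W-box} from $\Sigma_{W(\mu)}(e^{iz}) = \exp(i\phi_\mu(z))$, of \eqref{W-rhd}/\eqref{Beta} from Proposition~\ref{Prop:ID-preimage}, and of the intertwinings \eqref{BP:free}--\eqref{BP-Monotone} from the fact that all BP maps fix the canonical pair, are all sound and coincide with the paper's where they overlap.

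The one place where your sketch is thinner than it should be is the pivotal parameter identification itself. Extracting $(\gamma^\ast,\sigma^\ast)$ from $\exp(iF_{\nu_\uplus^{\alpha,\tau}}(z))$ is not the same computation as the Poisson summation in the proof of Theorem~\ref{Thm:Wrapping}: there one sums boundary values of the measure, whereas here one must match the \emph{exponents} of two Herglotz representations, which requires the partial-fraction identity expressing the circular kernel $\frac{1+\zeta e^{iz}}{1-\zeta e^{iz}} = i\cot\frac{z-x}{2}$ (for $\zeta = e^{-ix}$) as the principal-value sum of the translated Cauchy kernels $\frac{1}{x+2\pi n - z}$, together with the reweighting by $(1-\Re\zeta)$ versus $\frac{1+x^2}{x^2}$ and the compensator bookkeeping that produces $\sigma(\set{1}) = \frac12\tau(\set{0})$ and the $\sin x - \frac{x}{1+x^2}$ term in $\gamma$. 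You correctly flag this as the main obstacle, but it is precisely the computation the authors chose to import from \cite{Cebron-Multiplicative} rather than reproduce; a complete write-up along your lines would have to carry it out in full, since nothing else in the paper supplies it.
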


\begin{proof}
Relation \eqref{BP:free} follows from relation~\eqref{BP} and the discussion following it, and equation \eqref{W-ast} follows from the last comment in Remark~\ref{Remark:Cebron}. Combining them, we obtain equation~\eqref{W-box}. Next, the left-hand side of equation \eqref{Sigma-Tau} is $\exp(i (\phi_{\nu_\boxplus^{\alpha, \tau}}(z)))$, while the right-hand side is $\Sigma_{\nu_\boxtimes^{\gamma, \sigma}}(e^{i z})$, and these were just shown to be equal. Reinterpreting them as $\exp(i (F_{\nu_\uplus^{\alpha, \tau}}(z) - z))$ and $\eta_{\nu_{\sutimes}^{\gamma, \sigma}}(e^{i z})$, we obtain \eqref{W-u} and \eqref{BP-Boolean}. Finally, combining \eqref{LH-Monotone} and \eqref{Monotone-mult-generator} with equation~\eqref{Phi-plus-times} from the proof of Proposition~\ref{Prop:ID-preimage}, we see that \eqref{W-rhd} holds with $\beta$ as in equation \eqref{Beta}. Taking $\gamma = e^{-i \beta}$, we again obtain relation \eqref{Sigma-Tau}, and so also \eqref{BP-Monotone}. The final statement follows from relations \eqref{W-box}, \eqref{W-u}, \eqref{W-rhd}.
\end{proof}

\begin{Remark}
Let $\nu$ be the multiplicative free Gaussian measure, with
\[
\Sigma_\nu(z) = \exp \left( \frac{1}{2} \frac{1 + z}{1 - z} \right).
\]
It is $\boxtimes$-infinitely divisible, with free canonical pair $(0, \frac{1}{2} \delta_1)$. So its pre-image under C{\'e}bron's map consists of measures with free canonical pairs $(\alpha, \tau)$ such that
\[
\begin{split}
& W \left( \frac{x^2 + 1}{x^2} \,d\tau(x) |_{\mf{R} \setminus \set{0}} \right) = 0, \quad \tau(\set{0}) = 1, \\
& e^{i \alpha} = \exp \left(- i \int_{\mf{R} \setminus \set{0}} \left( \sin x - \frac{x}{1 + x^2} \right) \frac{1 + x^2}{x^2} \,d\tau(x) \right).
\end{split}
\]
That is, $\tau$ is supported on $\set{2 \pi k \ |\ k \in \mf{Z}}$, $\tau(\set{0}) = 1$, and $\alpha = 2 \pi n + \sum_{k \in \mf{Z} \setminus \set{0}} \frac{1}{2 \pi k} \tau(\set{2 \pi k})$. These, of course, include the semicircular distribution, with $\alpha = 0$, $\tau = \delta_0$. On the other hand, only some of these are in $\mc{L}$. Indeed, let $\mu \in \mc{L}$ be the measure with
\[
\phi_\mu(z) = - i \frac{1}{2} \frac{1 + e^{iz}}{1 - e^{iz}} + 2 \pi n.
\]
Then $e^{i F_\mu(z)} = \eta_\nu(e^{iz})$. So by Theorem~\ref{Thm:Wrapping}, $W(\mu) = \nu$. Also, $\phi_\mu : \mf{C}^+ \rightarrow \mf{C}^-$, and so $\mu \in \mc{ID}^\boxplus$ (and also is well-defined). Its free canonical pair $(\alpha, \tau)$ is determined by
\begin{equation}
\label{Free-Gaussian}
\phi_{\mu}(z) =  \alpha + \int_{\mf{R}} \frac{1 + x z}{z-x} \,d\tau(x) = - i \frac{1}{2} \frac{1 + e^{iz}}{1 - e^{iz}} + 2 \pi n.
\end{equation}
Since
\[
\lim_{y \downarrow 0} i y \phi_\mu(x + i y) = (1 + x^2) \tau(\set{x}),
\]
from equation~\eqref{Free-Gaussian} it follows that
\[
d\tau(x) = \sum_{k \in \mf{Z}} \frac{1}{1 + (2 \pi k)^2} \delta_{2 \pi k}(x)
\]
and we may take
\[
\alpha = 2 \pi n + \sum_{k \neq 0} \frac{1}{2 \pi k (1 + (2 \pi k)^2)}.
\]
These are exactly the pre-images of $\nu$ in $\mc{L}$.
\end{Remark}

\begin{Remark}
$W$ also transforms additive conditionally free and monotone convolutions into their multiplicative counterparts. The constructions are similar to those already discussed, and these operations do not arise in our applications, so we only outline the argument. Conditionally free and monotone convolutions operate on pairs of measures. For $\mu, \nu \in \mc{P}(\mf{R})$, define as in \cite{Wang-Additive-c-free}
\[
\phi_{(\mu, \nu)}(z) = F_\nu^{-1}(z) - F_\mu(F_\nu^{-1}(z))
\]
and the additive conditionally free convolution by $(\mu_1, \nu_1) \boxplus_c (\mu_2, \nu_2) = (\widetilde{\mu}, \nu_1 \boxplus \nu_2)$ and
\[
\phi_{(\mu_1, \nu_1) \boxplus_c (\mu_2, \nu_2)}(z) = \phi_{\mu_1, \nu_1}(z) + \phi_{\mu_2, \nu_2}(z)
\]
Similarly, following \cite{Popa-Wang-Multiplicative-c-free} up to an inversion, define for $\mu, \nu \in \mc{P}(\mf{T})$,
\[
\Sigma_{(\mu, \nu)}(z) = \frac{\eta_\mu(\eta_\nu^{-1}(z))}{\eta_\nu^{-1}(z)}
\]
and the multiplicative conditionally free convolution by $(\mu_1, \nu_1) \boxtimes_c (\mu_2, \nu_2) = (\widetilde{\mu}, \nu_1 \boxtimes \nu_2)$ and
\[
\Sigma_{(\mu_1, \nu_1) \boxtimes_c (\mu_2, \nu_2)}(z) = \Sigma_{\mu_1, \nu_1}(z) \Sigma_{\mu_2, \nu_2}(z)
\]
Then clearly if $(\mu_1, \nu_1) \boxplus_c (\mu_2, \nu_2) = (\widetilde{\mu}, \widetilde{\nu})$, then
\[
(W(\mu_1), W(\mu_2)) \boxtimes_c (W(\mu_2), W(\nu_2)) = (W(\widetilde{\mu}), W(\widetilde{\nu})).
\]
On the other hand, following \cite{Hasebe-Conditionally-monotone}, define the additive conditionally monotone convolution by $(\mu_1, \nu_1) \rhd_c (\mu_2, \nu_2) = (\widetilde{\mu}, \nu_1 \rhd \nu_2)$ and
\[
F_{\widetilde{\mu}}(z) = F_{\mu_1}(F_{\nu_2}(z)) + F_{\mu_2}(z) - F_{\nu_2}(z);
\]
while following \cite{Hasebe-Conditionally-monotone-2}, define the multiplicative conditionally monotone convolution by
\[
(\mu_1, \nu_1) \circlearrowright_c (\mu_2, \nu_2) = (\widetilde{\mu}, \nu_1 \circlearrowright \nu_2)
\]
and
\[
\eta_{\widetilde{\mu}}(z) = \frac{\eta_{\mu_2}(z)}{\eta_{\nu_2}(z)} \eta_{\mu_1}(\eta_{\nu_2}(z)).
\]
Again, it is clear that for $(\mu_1, \nu_1) \rhd_c (\mu_2, \nu_2) = (\widetilde{\mu}, \widetilde{\nu})$, we have
\[
(W(\mu_1), W(\mu_2)) \circlearrowright_c (W(\mu_2), W(\nu_2)) = (W(\widetilde{\mu}), W(\widetilde{\nu})).
\]
Presumably the same property also holds for Hasebe's convolution for triples of measures from \cite{Hasebe-triple-product}, but we have not verified the details.

\end{Remark}

\subsection{Other properties of $W$}

\begin{Prop}
\label{Prop:Atoms0}
If $\mu \in \mc{L}$ has an atom at $x$, then $W(\mu)$ has an atom at $e^{-ix}$ of the same size. Conversely, if $e^{-ix}$ is an atom of $W(\mu)$, there is a unique $n \in \mf{Z}$ such that $\mu$ has an atom at $x + 2 \pi n$, and it is of the same size.
\end{Prop}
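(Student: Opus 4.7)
The plan is to reduce the statement to an almost tautological bookkeeping of atoms using only the defining pushforward property of $W$, together with the periodicity that characterizes $\mc{L}$. First I would unpack the definition: formula \eqref{Wrapping} says $W(\mu)$ is the pushforward of $\mu$ under the continuous map $x \mapsto e^{-ix}$, so for every $x_0 \in \mf{R}$,
\[
W(\mu)\bigl(\set{e^{-ix_0}}\bigr) = \mu\bigl(\set{x \in \mf{R} : e^{-ix} = e^{-ix_0}}\bigr) = \sum_{n \in \mf{Z}} \mu\bigl(\set{x_0 + 2\pi n}\bigr).
\]
The theorem then reduces to the claim that for $\mu \in \mc{L}$, at most one term in this sum can be nonzero.

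The key analytic input is the standard Stieltjes-type characterization of atoms via $F$: if $\mu(\set{y_0}) = w > 0$, then $G_\mu(z) = \frac{w}{z - y_0} + H(z)$ with $H$ analytic near $y_0$, so $F_\mu(y_0 + iy) \sim iy/w$ and in particular $F_\mu(y_0 + iy) \to 0$ as $y \downarrow 0$. Equivalently $(iy)^{-1} F_\mu(y_0+iy) \to 1/w$, which also recovers the weight. I would quote this as a standard fact.

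Now assume $\mu \in \mc{L}$ has an atom at some $x_1 \in \mf{R}$ of weight $w$. By Lemma~\ref{Lemma:Conditions}, $F_\mu(z + 2\pi n) = F_\mu(z) + 2\pi n$ for every $n \in \mf{Z}$. Hence for $n \neq 0$,
\[
\lim_{y \downarrow 0} F_\mu\bigl((x_1 + 2\pi n) + iy\bigr) = \lim_{y \downarrow 0} F_\mu(x_1 + iy) + 2\pi n = 2\pi n \neq 0,
\]
so $\mu$ cannot have an atom at $x_1 + 2\pi n$. This proves the uniqueness claim.

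Assembling: if $\mu$ has an atom at $x_0$ of weight $w$, then by the previous paragraph the only nonzero term in the sum is the $n = 0$ term, so $W(\mu)(\set{e^{-ix_0}}) = w$. Conversely, if $W(\mu)(\set{e^{-ix_0}}) = w > 0$, then at least one $x_0 + 2\pi n$ is an atom of $\mu$; applying the uniqueness above to this atom shows that $n$ is unique and that $\mu(\set{x_0 + 2\pi n}) = w$. I do not foresee a significant obstacle here: the only nontrivial ingredient is the periodicity of $F_\mu$, which is precisely the defining property of $\mc{L}$, and the atom/$F$-transform correspondence is classical.
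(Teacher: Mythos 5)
Your proof is correct, but it takes a genuinely different route from the paper's. You exploit the fact that $W(\mu)$ is the pushforward of $\mu$ under $x \mapsto e^{-ix}$, so that $W(\mu)(\set{e^{-ix_0}}) = \sum_{n \in \mf{Z}} \mu(\set{x_0 + 2\pi n})$ is immediate measure theory, and the only analytic input you need is the uniqueness of the atom in each fiber, which you get from the periodicity $F_\mu(z+2\pi n) = F_\mu(z) + 2\pi n$ together with the standard limit $\mu(\set{y_0}) = \lim_{y \downarrow 0} iy/F_\mu(y_0+iy)$. The paper instead works entirely on the transform side: it expresses the atom weight of $W(\mu)$ at $e^{-ix}$ via the boundary behavior of $\eta_{W(\mu)}$, substitutes $\eta_{W(\mu)}(e^{iz}) = e^{iF_\mu(z)}$ from Theorem~\ref{Thm:Wrapping}, and shows the ratio of the two atom weights tends to $1$; the uniqueness of $n$ then falls out of the converse computation. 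Your argument is more elementary for the weight equality and makes the structural picture (one atom per fiber collects onto one atom of the wrapped measure) more transparent, while the paper's is uniform with its transform-based methodology throughout. One small caveat: your justification of the standard fact is imprecise --- if $\mu(\set{y_0}) = w > 0$ one cannot in general write $G_\mu(z) = \frac{w}{z-y_0} + H(z)$ with $H$ analytic near $y_0$ (the remaining mass of $\mu$ may accumulate at $y_0$); the correct statement, which is all you use, is that $\lim_{y \downarrow 0} iy\, G_\mu(y_0+iy) = \mu(\set{y_0})$ by dominated convergence, equivalently $\lim_{y \downarrow 0} iy/F_\mu(y_0+iy) = \mu(\set{y_0})$. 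With that fact quoted correctly, every step of your argument goes through.
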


\begin{proof}
For $x \in \mf{R}$,
\[
\mu(\set{x}) = \lim_{y \downarrow 0} \frac{i y}{F_\mu(x + i y)},
\]
while for $e^{-ix} \in \mf{T}$,
\[
\nu(\set{e^{-ix}})
= \lim_{r \uparrow 1} (1-r) \frac{\eta_\nu(r e^{ix})}{1 - \eta(r e^{ix})}
= \lim_{y \downarrow 0} (1-e^{-y}) \frac{\eta_\nu(e^{-y} e^{ix})}{1 - \eta(e^{-y} e^{ix})},
\]
where we set $r = e^{-y}$. If $\nu = W(\mu)$, so that $\eta_\nu(e^{iz}) = e^{i F_\mu(z)}$, then
\[
\nu(\set{e^{-ix}})
= \lim_{y \downarrow 0} (1-e^{-y}) \frac{\exp(i F_\mu(x + i y))}{1 - \exp(i F_\mu(x + i y))}
= \lim_{y \downarrow 0}  \frac{(1-e^{-y})}{\exp(-i F_\mu(x + i y)) - 1}
\]
In particular, if $x$ is an atom of $\mu$, then $\lim_{y \downarrow 0} F_\mu(x + i y) = 0$. Therefore in this case,
\begin{equation}
\label{Ratio}
\frac{\mu(\set{x})}{\nu(\set{e^{-ix}})}
= \lim_{y \downarrow 0} \frac{i y}{(1-e^{-y})} \frac{\exp(-i F_\mu(x + i y)) - 1}{F_\mu(x + i y)} = 1,
\end{equation}
and so $\nu$ has an atom at $e^{-ix}$ of weight $\mu(\set{x})$.

For the converse, if $e^{-ix}$ is an atom of $\nu$, then $\lim_{y \downarrow 0} \exp(-i F_\mu(x + i y)) = 1$. So there is a unique (by continuity)  $n \in \mf{Z}$ such that $\lim_{y \downarrow 0} (- F_\mu(x + i y)) = 2 \pi n$, i.e. (since $\mu \in \mc{L}$)
\[
\lim_{y \downarrow 0} F_\mu(x + 2 \pi n + i y)) = 0.
\]
We now use equation \eqref{Ratio} again.
\end{proof}

\begin{Cor}
\label{Cor:L-atoms-1}
If $\mu \in \mc{L}$, at most one element of each $\set{x + 2 \pi n \ |\ n \in \mf{Z}}$ may be an atom of $\mu$.
\end{Cor}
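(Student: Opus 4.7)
The plan is to derive this as an immediate consequence of Proposition~\ref{Prop:Atoms0}, using the uniqueness part of its converse direction. The key observation is that all points of the form $x + 2\pi n$ share a common image under the map $y \mapsto e^{-iy}$, namely $e^{-ix} \in \mf{T}$, since $e^{-2\pi i n} = 1$. So any two atoms of $\mu$ belonging to the same coset $\set{x + 2 \pi n \ |\ n \in \mf{Z}}$ would necessarily map to the \emph{same} atom of $W(\mu)$ on the circle.

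The argument I would give is as follows. Suppose for contradiction that $\mu \in \mc{L}$ has atoms at two distinct points $x_1 = x + 2 \pi n_1$ and $x_2 = x + 2 \pi n_2$ with $n_1 \neq n_2$. By the forward direction of Proposition~\ref{Prop:Atoms0} applied to the atom $x_1$, the measure $W(\mu)$ has an atom at $e^{-i x_1} = e^{-i x}$, of weight $\mu(\set{x_1}) > 0$. Now apply the converse direction of Proposition~\ref{Prop:Atoms0} to this atom $e^{-i x}$ of $W(\mu)$: it guarantees the existence of a \emph{unique} $n \in \mf{Z}$ such that $\mu$ has an atom at $x + 2 \pi n$. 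But by hypothesis both $n_1$ and $n_2$ satisfy this condition, contradicting uniqueness.

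There is no real obstacle to this proof beyond recognizing the right reading of Proposition~\ref{Prop:Atoms0}; the corollary is genuinely just a repackaging of its uniqueness clause. Alternatively, one could avoid contradiction and simply remark that the forward/backward pairing in Proposition~\ref{Prop:Atoms0} defines a weight-preserving bijection between the atoms of $W(\mu) \in \mc{P}(\mf{T})$ and the atoms of $\mu$, so each fiber $\set{x + 2\pi n}$ over a point $e^{-i x} \in \mf{T}$ contains at most one atom (and exactly one when $e^{-i x}$ is an atom of $W(\mu)$).
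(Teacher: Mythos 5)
Your proof is correct and matches the paper's intent: the corollary is stated immediately after Proposition~\ref{Prop:Atoms0} precisely because it is the uniqueness clause of that proposition read off for the fiber $\set{x + 2\pi n \ |\ n \in \mf{Z}}$ over $e^{-ix}$. Your contradiction argument (or equivalently the bijection remark) is exactly the intended one-line deduction.
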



See Corollary~\ref{Cor:L-atoms-2} for a follow-up.

In particular, for the case of discrete measure $\sigma$ as in Proposition~\ref{Prop:Atoms}, we have a complete description of $\nu_{\sutimes}^{\beta, \sigma}$. The following corollary generalizes Example 3.8 of \cite{Franz-Boolean-circle} and the results in Section 2.4 of \cite{Hamdi-Unitary-BM}.

\begin{Cor}
\label{Cor:Finite-support}
Let $\sigma$ be a measure on $\mf{T}$ with finite support, as in Proposition~\ref{Prop:Atoms},
\[
\sigma = \sum_{j=1}^N a_j \delta_{e^{i \theta_j}},
\]
where $0 \leq \theta_1 < \ldots < \theta_N < 2 \pi$. Then the measure $\nu_{\sutimes}^{e^{i \beta}, \sigma} \in \mc{ID}^{\sutimes}_\ast$ is purely atomic, and its atoms can be decomposed into $N$ families $\set{(e^{x_{jk}})_{j \in \mf{Z}}, 1 \leq k \leq N}$, with $x_{jk}$ the unique solution of the equation
\begin{equation}
x + \beta = \sum_{j=1}^N a_j \cot \frac{x - \theta_j}{2}
\end{equation}
in the interval
\[
\begin{cases}
(2k \pi + \theta_j, 2k \pi + \theta_{j+1}), & 1 \leq j < N, \\
(2k \pi + \theta_N, 2(k+1) \pi + \theta_{1}) & j = N.
\end{cases}
\]
The atom at $e^{ix}$ has the weight
\[
\frac{1}{\frac{3}{2} + \frac{1}{2} \sum_{j=1}^N a_j \cot^2 \frac{x - \theta_j}{2}}.
\]
Asymptotically,
\[
\lim_{k \rightarrow +\infty} e^{i x_{jk}} = e^{i \theta_j}, \lim_{k \rightarrow - \infty} e^{i x_{jk}} = e^{i \theta_{j+1}}
\]
($j$ modulo $N$).
\end{Cor}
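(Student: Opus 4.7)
The plan is to realize $\nu_{\sutimes}^{e^{i\beta},\sigma}$ as the wrapping $W(\mu)$ of an explicit element of $\mc{L}$, and then transfer the atom description from Proposition~\ref{Prop:Atoms} to the circle using Proposition~\ref{Prop:Atoms0}.

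First, I would use Lemma~\ref{Cara} to identify the measure $\mu \in \mc{L}$ corresponding to the pair $(-\beta,\sigma)$, and check by direct computation that $\exp(iF_\mu(z))$ coincides with $\eta_{\nu_{\sutimes}^{e^{i\beta},\sigma}}(e^{iz})$. Theorem~\ref{Thm:Wrapping} then yields $W(\mu)=\nu_{\sutimes}^{e^{i\beta},\sigma}$. Applying Proposition~\ref{Prop:Atoms} to $\mu$ shows it is purely atomic, with atoms at the solutions of $y+\beta = h(y) := \sum_{j=1}^N a_j \cot\frac{\theta_j+y}{2}$, carrying the prescribed weights.

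Next, I would count and locate the solutions. The function $h$ is $2\pi$-periodic with simple poles exactly at the points $-\theta_j + 2k\pi$; on each open interval between two consecutive poles, every summand $a_j\cot\frac{\theta_j+y}{2}$ is strictly decreasing, so $h$ itself decreases strictly from $+\infty$ to $-\infty$ there. Since $y+\beta$ is strictly increasing, there is exactly one solution in each such interval. By Corollary~\ref{Cor:L-atoms-1} these atoms lie in pairwise distinct $2\pi$-orbits, and Proposition~\ref{Prop:Atoms0} gives exactly one atom of $W(\mu)$, with the same weight, for each solution $y$, located at $e^{-iy}$.

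To rewrite this in the form stated, I would relabel: the atom $e^{-iy}$ can be written as $e^{ix}$ with $x$ lying in the unique interval of the form $(2k\pi+\theta_j,2k\pi+\theta_{j+1})$ (with the convention $\theta_{N+1}=\theta_1+2\pi$) obtained by reflecting $y$ and shifting by a suitable multiple of $2\pi$. The identities $\cot\frac{\theta_j-x}{2}=-\cot\frac{x-\theta_j}{2}$ and the $\pi$-periodicity of cotangent transform the equation for $y$ into the equation $x+\beta=\sum_j a_j \cot\frac{x-\theta_j}{2}$ for $x$, and the weight formula into the stated one. Existence and uniqueness of $x_{jk}$ in each such interval follow once more from strict monotonicity of $H(x):=\sum_j a_j \cot\frac{x-\theta_j}{2}$, which decreases from $+\infty$ to $-\infty$ on each of the described intervals. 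For the asymptotic claim, as $k\to+\infty$ the equation $H(x_{jk})=x_{jk}+\beta\to+\infty$ forces $x_{jk}$ to approach the left endpoint $2k\pi+\theta_j$, where $\cot\frac{x-\theta_j}{2}$ has its $+\infty$ singularity, hence $e^{ix_{jk}}\to e^{i\theta_j}$; the symmetric computation as $k\to-\infty$ yields the limit $e^{i\theta_{j+1}}$.

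The main obstacle will be a careful bookkeeping of sign conventions: the map $W$ wraps clockwise, so an atom of $\mu$ at $y$ produces an atom of $W(\mu)$ at $e^{-iy}$, and the integer $m$ needed to represent $-y$ in the prescribed interval $(2k\pi+\theta_j,2k\pi+\theta_{j+1})$ must be chosen consistently with the labeling $(j,k)$. Once the cotangent identities above are applied, this reduces to a purely mechanical matching of indices.
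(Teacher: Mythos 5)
Your proposal is correct and follows exactly the route the paper intends: the Corollary is stated without proof as an immediate consequence of Proposition~\ref{Prop:Atoms} (atoms of the corresponding measure in $\mc{L}$), Theorem~\ref{Thm:Wrapping}/Lemma~\ref{Cara} (identifying that measure's image under $W$ as $\nu_{\sutimes}^{\gamma,\sigma}$), and Proposition~\ref{Prop:Atoms0} (transfer of atoms and weights), plus the elementary monotonicity analysis of $\sum_j a_j\cot\frac{x-\theta_j}{2}$ between consecutive poles that you supply. The only caveat is the sign bookkeeping you already flag: tracking the clockwise wrapping carefully, the transferred equation comes out as $x-\beta=\sum_j a_j\cot\frac{x-\theta_j}{2}$ rather than $x+\beta=\cdots$, so either your final matching step or the paper's displayed equation carries a sign slip, but this does not affect the structure or validity of the argument.
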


\begin{Prop}
\label{Prop:Absolute-continuity}
Let $\mu \in \mc{L}$.
\begin{enumerate}
\item
$\mu$ is absolutely continuous, singular, or purely atomic if and only if $W(\mu)$ is.
\item
$\mu$ is absolutely continuous with a strictly positive density if and only if $W(\mu)$ is.
\item
The components of $\supp(\mu^{ac})$ are $2 \pi$-periodic. In particular there is either one or infinitely many components. The number of components of $\supp(W(\mu)^{ac})$ is either one less than or equal to the number of components of $\supp(\mu^{ac}) \cap [0, 2 \pi)$, depending on whether $0$ is in the interior of the support.
\end{enumerate}
\end{Prop}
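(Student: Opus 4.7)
The plan is to handle the three parts in sequence, using (i) that $W$ is additive and preserves total mass, and (ii) the representation $F_\mu(z)=z+f(e^{iz})$ from Lemma~\ref{Lemma:Conditions}, together with the density formulas for $\mu$ and $W(\mu)$ from equation~\eqref{Density} and the proof of Theorem~\ref{Thm:Wrapping}.

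For (a), I would verify that $W$ carries the Lebesgue decomposition of $\mu$ onto that of $W(\mu)$ termwise. By \eqref{Wrapping}, $W(\mu^{ac})$ has density $\sum_{n\in\mf{Z}}f_{\mu^{ac}}(x+2\pi n)$ and hence is absolutely continuous; $W(\mu^{pp})$ is purely atomic, being supported on the countable image of the atoms of $\mu^{pp}$; and $W(\mu^{sc})$ is singular continuous, since it is concentrated on the image of a Lebesgue null set of $\mf{R}$ (still null on $\mf{T}$) and assigns zero mass to each singleton $\{e^{-ix}\}$ because $\sum_{n}\mu^{sc}(\{x+2\pi n\})=0$. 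Uniqueness of the Lebesgue decomposition then identifies $W(\mu^{\bullet})$ with $W(\mu)^{\bullet}$, and since $W$ preserves total mass, each part vanishes if and only if its image does, yielding both directions at once.

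For (b), the strategy is to reduce positivity of each density to boundary values of the single harmonic function $\Imm f$. By \eqref{Density}, the density of $\mu$ is positive at $x$ iff $\lim_{r\uparrow 1}\Imm f(re^{ix})>0$. From the computation in the proof of Theorem~\ref{Thm:Wrapping}, $W(\mu)$ has density
\[
\frac{1}{2\pi}\lim_{y\downarrow 0}\frac{1-e^{-2\Imm F_\mu(x+iy)}}{|1-e^{iF_\mu(x+iy)}|^{2}},
\]
which is positive at $e^{-ix}$ iff $\lim_{y\downarrow 0}\Imm F_\mu(x+iy)>0$; and since $\Imm F_\mu(x+iy)=y+\Imm f(e^{-y+ix})$, this is the same condition. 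Hence both densities are strictly positive precisely when $\Imm f$ has strictly positive boundary values on $\mf{T}$.

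For (c), the same characterization identifies $\supp(\mu^{ac})$ with the closure of $\{x\in\mf{R}:\lim_{r\uparrow 1}\Imm f(re^{ix})>0\}$, a $2\pi$-periodic subset of $\mf{R}$. Consequently its connected components are permuted by translation by $2\pi$, giving either one component ($\mf{R}$ itself) or infinitely many. The map $x\mapsto e^{-ix}$ restricted to $[0,2\pi)$ is a continuous bijection onto $\mf{T}$, so components of $\supp(\mu^{ac})\cap[0,2\pi)$ correspond to components of $\supp(W(\mu)^{ac})$, except possibly at the gluing point $1=e^{-i\cdot 0}=e^{-i\cdot 2\pi}$. By periodicity, $0$ lies in the interior of $\supp(\mu^{ac})$ iff $2\pi$ does, iff the two components of $\supp(\mu^{ac})\cap[0,2\pi)$ abutting $0$ and $2\pi$ are distinct but merge into a single arc on $\mf{T}$; in that case the component count drops by one, and otherwise it is preserved. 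The main obstacle, I expect, is the bookkeeping in (c): showing that the only possible merging occurs at $1\in\mf{T}$, and that the interior-vs-non-interior dichotomy is exactly what controls it. Parts (a) and (b) are essentially direct consequences of the density formulas and the additivity of $W$.
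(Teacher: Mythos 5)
Your proposal is correct and follows essentially the same route as the paper: part (a) comes from the fact that $W$ carries the Lebesgue decomposition over termwise (what the paper calls ``directly from the definition of $W$''), and parts (b) and (c) rest on the $2\pi$-periodicity of $\Imm f(re^{ix})$ in the density formula \eqref{Density}, so that the density of $\mu$ vanishes at $x$ if and only if it vanishes at every $x+2\pi n$, which is exactly the paper's one-line argument. You simply make explicit the boundary-value characterization and the component bookkeeping at $1\in\mf{T}$ that the paper leaves implicit.
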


\begin{proof}
Part (a) and one direction of part (b) follow directly from the definition of $W$. Next, suppose that $\mu$ has zero density at $x$. Then from \eqref{Density}, $\mu$ also has zero density at all $x + 2 \pi n$, and therefore $W(\mu)$ has zero density at $e^{-ix}$. Parts (b) and (c) follow.
\end{proof}

\begin{Prop}
\label{Prop:Continuity-W}
$W$ is weakly continuous; $W^{-1}$ is weakly continuous as a map to $\mc{L} \mod \delta_{2 \pi}$, but not as a map to $\mc{L}$.
\end{Prop}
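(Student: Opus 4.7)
My plan is to handle the three assertions separately.

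For the continuity of $W:\mc{L}\to\mc{ID}^{\sutimes}_\ast$, I would use the characterization from Lemma~\ref{Lemma:Conditions} together with Theorem~\ref{Thm:Wrapping}. Weak convergence $\mu_n\to\mu$ in $\mc{L}$ yields locally uniform convergence $F_{\mu_n}\to F_\mu$ on $\mf{C}^+$; writing $F_\mu(z)=z+f_\mu(e^{iz})$, this gives $f_{\mu_n}\to f_\mu$ on $\mf{D}\setminus\set{0}$ and, after the removable-singularity argument already used in Proposition~\ref{Prop:Continuity-L}, on all of $\mf{D}$. Since $\eta_{W(\mu_n)}(z)=ze^{if_{\mu_n}(z)}$ by Proposition~\ref{Prop:A-on-functions}, the family $\eta_{W(\mu_n)}\to\eta_{W(\mu)}$ locally uniformly on $\mf{D}$, which is equivalent to $W(\mu_n)\to W(\mu)$ weakly on $\mf{T}$.

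For the continuity of $W^{-1}$ as a map to the quotient $\mc{L}\mod\delta_{2\pi}$, I would reverse this argument. Given $\nu_n\to\nu$, one has $\eta_{\nu_n}\to\eta_\nu$ locally uniformly, so the ratios $g_n(z)=\eta_{\nu_n}(z)/z$ (analytic at $0$ with $g_n(0)=\eta'_{\nu_n}(0)\neq 0$) converge locally uniformly to $g(z)=\eta_\nu(z)/z$; all of these are nowhere vanishing on $\mf{D}$ by \eqref{Multi-Boolean}. The key technical step is to choose, for each sufficiently large $n$, a branch of $\log g_n$ so that $\log g_n(0)\to\log g(0)$, which is possible after a one-time shift by $2\pi i k_n$ for some $k_n\in\mf{Z}$. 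The standard result on logarithms of uniformly convergent non-vanishing analytic sequences then yields $\log g_n\to\log g$ locally uniformly. Setting $f_n=-i\log g_n$ and $F_{\mu_n}(z)=z+f_n(e^{iz})$ picks out representatives $\mu_n\in W^{-1}(\nu_n)$. Since $f_n(0)\to f(0)$ in particular remains bounded, the tightness criterion $F_{\mu_n}(iy)/(iy)\to 1$ uniformly in $n$ is met, and locally uniform convergence $F_{\mu_n}\to F_\mu$ upgrades to weak convergence $\mu_n\to\mu$. The integer shifts $k_n$ are exactly the ambiguity absorbed by $\mod\delta_{2\pi}$.

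For the failure of continuity to $\mc{L}$, I would exhibit a topological obstruction. The path $\gamma:s\mapsto\delta_{e^{is}}$, $s\in[0,2\pi]$, is a loop in $\mc{ID}^{\sutimes}_\ast$ with $\gamma(0)=\gamma(2\pi)=\delta_1$, but each fiber $W^{-1}(\gamma(s))=\set{\delta_{-s+2\pi k}}_{k\in\mf{Z}}$ is discrete, so any continuous lift to $\mc{L}$ must take the form $s\mapsto\delta_{-s+2\pi k}$ for some fixed $k\in\mf{Z}$. Such a lift starts at $\delta_{2\pi k}$ but ends at $\delta_{2\pi(k-1)}$, which is a different measure in $\mc{L}$. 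Hence no single-valued continuous right inverse of $W$ can be defined on any neighborhood of $\delta_1$ containing this loop.

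I expect the main obstacle to be the branch-selection in the middle step, specifically ensuring that the shifts $2\pi ik_n$ can be chosen so that $\log g_n(0)\to\log g(0)$ holds in $\mf{C}$ (not merely modulo $2\pi i$), and then upgrading the resulting locally uniform convergence of $F$-transforms to genuine weak convergence of measures via tightness. Fortunately the structure $F_\mu(z)=z+f_\mu(e^{iz})$ with $f_\mu$ analytic and bounded on $\mf{D}$ makes the tightness automatic once the branches are aligned.
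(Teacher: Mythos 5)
Your proof is correct, and its core --- the middle claim --- runs on the same mechanism as the paper's: the identity $e^{iF_\mu(z)}=\eta_{W(\mu)}(e^{iz})$ reduces continuity of $W^{-1}$ to the $2\pi i$-ambiguity in taking logarithms of the locally uniformly convergent, non-vanishing functions $\eta_{\nu_n}(z)/z$. The paper compresses this to the single observation that $f_{\mu_n}-f_\mu\to 0 \mod 2\pi$ and that weak convergence $\mod\delta_{2\pi}$ is equivalent to locally uniform convergence $\mod 2\pi$ of $F$-transforms; you supply the details it omits (aligning branches at $0$, checking the tightness condition $F_{\mu_n}(iy)/(iy)\to 1$ uniformly in $n$, which is indeed automatic from the boundedness of $f_n$ near $0$). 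Two genuine divergences are worth noting. First, for the continuity of $W$ itself the paper argues directly from the defining relation \eqref{Wrapping} (equivalently \eqref{Exp:circle}: pairing $W(\mu)$ with a continuous function on $\mf{T}$ is pairing $\mu$ with a bounded continuous periodic function on $\mf{R}$), which is shorter and works on all of $\mc{P}(\mf{R})$, not just $\mc{L}$; your transform-based route is correct but only covers the restriction to $\mc{L}$. Second, for the failure of continuity into $\mc{L}$ the paper merely remarks that $f_{\mu_n}-f_\mu$ ``may not converge to zero,'' whereas your monodromy argument with the loop $s\mapsto\delta_{e^{is}}$ and its discrete fibers $\set{\delta_{-s+2\pi k}}$ is an explicit obstruction showing no continuous section of $W$ exists near $\delta_1$; this is a sharper and more convincing justification of that clause than what the paper records.
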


\begin{proof}
Continuity of $W$ follows directly from relation \eqref{Wrapping}. If $W(\mu_n) \rightarrow W(\mu)$, it follows that $e^{i f_{\mu_n}(z)} \rightarrow e^{i f_\mu(z)}$ uniformly on compact subsets of $\mf{D}$. So $f_{\mu_n} - f_\mu \rightarrow 0 \mod 2 \pi$, but may not converge to zero. It remains to observe that weak convergence $\mod \delta_{2 \pi}$ is equivalent to uniform convergence $\mod 2 \pi$ of $F$-transforms on compact sets.
\end{proof}

\begin{Lemma}
\label{Lemma:Infinitesimal}
Let $\set{\nu_{nk} : n \in \mf{N}, 1 \leq k \leq k_n} \subset \mc{ID}^{\sutimes}_\ast$ be an infinitesimal triangular array, so that
\[
\lim_{n \rightarrow \infty} \max_{1 \leq k \leq k_n} \nu_{nk} \left( \set{ \zeta \in \mf{T} : \abs{\zeta - 1} \geq \eps} \right) = 0.
\]
Then there exists a triangular array $\set{\mu_{nk} : n \in \mf{N}, 1 \leq k \leq k_n} \subset \mc{L}$ such that $W(\mu_{nk}) = \nu_{nk}$ which is also infinitesimal, that is,
\[
\lim_{n \rightarrow \infty} \max_{1 \leq k \leq k_n} \mu_{nk} \left( \set{ x \in \mf{R} : \abs{x} \geq \eps} \right) = 0.
\]
\end{Lemma}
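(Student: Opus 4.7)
I would begin by using the L\'evy–Khinchin parametrization of Lemma~\ref{Cara}: every $\nu_{nk} \in \mc{ID}^{\sutimes}_\ast$ can be written as $\nu_{\sutimes}^{\gamma_{nk}, \sigma_{nk}}$ for a unique $\gamma_{nk} \in \mf{T}$ and finite Borel measure $\sigma_{nk}$ on $\mf{T}$. The same Lemma produces, for any lift $\beta_{nk} \in \mf{R}$ with $e^{-i\beta_{nk}} = \gamma_{nk}$, a measure $\mu_{nk} \in \mc{L}$ with
\[
F_{\mu_{nk}}(z) = z - \beta_{nk} + i \int_{\mf{T}} \frac{1 + \zeta e^{iz}}{1 - \zeta e^{iz}} \, d\sigma_{nk}(\zeta),
\]
and multiplying by $i$ and exponentiating one sees that $\exp(i F_{\mu_{nk}}(z)) = \eta_{\nu_{nk}}(e^{iz})$, so $W(\mu_{nk}) = \nu_{nk}$ by Theorem~\ref{Thm:Wrapping}. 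The task reduces to choosing the lifts $\beta_{nk}$ so that the resulting array $\{\mu_{nk}\}$ is infinitesimal.

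The first substantive step is to show that infinitesimality of $\{\nu_{nk}\}$ forces $\gamma_{nk} \to 1$ and $\sigma_{nk}(\mf{T}) \to 0$ uniformly in $k$ as $n \to \infty$. The $\varepsilon$-definition of infinitesimality is equivalent to uniform-in-$k$ weak convergence $\nu_{nk} \to \delta_1$, and applying this to the bounded continuous test function $\zeta \mapsto \zeta$ gives
\[
\eta'_{\nu_{nk}}(0) = \int_{\mf{T}} \zeta \, d\nu_{nk}(\zeta) \longrightarrow 1
\]
uniformly in $k$. Differentiating the L\'evy–Khinchin formula at the origin yields $\eta'_{\nu_{nk}}(0) = \gamma_{nk} e^{-\sigma_{nk}(\mf{T})}$; since $\gamma_{nk} \in \mf{T}$ while $e^{-\sigma_{nk}(\mf{T})}$ is positive real, reading off modulus and argument produces both required convergences. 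I then pick $\beta_{nk} \in (-\pi, \pi]$ with $e^{-i\beta_{nk}} = \gamma_{nk}$, so that $\beta_{nk} \to 0$ uniformly in $k$.

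The remaining step is to verify that this canonical choice makes $\{\mu_{nk}\}$ infinitesimal. For fixed $y > 0$ every point $e^{iz}$ with $\Imm z = y$ lies in the disk of radius $e^{-y}$ about the origin, so the integrand in the formula for $F_{\mu_{nk}}$ has modulus at most $(1+e^{-y})/(1-e^{-y})$, giving
\[
\sup_{x \in \mf{R}} \bigl|F_{\mu_{nk}}(x + i y) - (x + iy)\bigr| \leq |\beta_{nk}| + \frac{1+e^{-y}}{1-e^{-y}} \, \sigma_{nk}(\mf{T}) \longrightarrow 0
\]
uniformly in $k$. To turn this into the tail estimate I would use the identity $1 + y \Imm G_\mu(iy) = \int_{\mf{R}} x^2/(y^2+x^2) \, d\mu(x)$ and the resulting Chebyshev-type bound $\mu(\{|x| \geq \varepsilon\}) \leq (1 + y^2/\varepsilon^2)(1 + y \Imm G_\mu(iy))$; writing $G_{\mu_{nk}}(iy) = 1/F_{\mu_{nk}}(iy)$ and expanding about $iy$ shows $1 + y \Imm G_{\mu_{nk}}(iy) \to 0$ uniformly in $k$ for every fixed $y > 0$. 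Fixing $y$ (say $y = 1$) then delivers $\max_k \mu_{nk}(\{|x| \geq \varepsilon\}) \to 0$ for every $\varepsilon > 0$.

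The delicate step is this last one: while the bound on $F_{\mu_{nk}}$ immediately gives weak convergence to $\delta_0$, promoting it to the uniform tail control required by infinitesimality needs an explicit Cauchy-transform inequality whose constants must be tracked in terms of $\varepsilon$. The choice of $\beta_{nk}$ in $(-\pi, \pi]$ is exactly what rules out the competing pre-images $\mu_{nk} \uplus \delta_{2\pi m}$ with $m \neq 0$, which would instead concentrate around $2 \pi m$; this is the same ambiguity that prevents $W^{-1}$ from being continuous into $\mc{L}$ (cf.\ Proposition~\ref{Prop:Continuity-W}).
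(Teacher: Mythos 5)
Your proof is correct, but it takes a genuinely different route from the paper's. The paper starts from an \emph{arbitrary} preimage $\mu_{nk}\in\mc{L}$ of $\nu_{nk}$, uses the Poisson-summation identity from the proof of Theorem~\ref{Thm:Wrapping} to rewrite the mass of $\mu_{nk}$ near the lattice $2\pi\mf{Z}$ in terms of a wrapped Poisson kernel, argues by contradiction (finiteness of the relevant supremum would force $2C\eps\geq 1-\delta$) that $F_{\mu_{nk}}$ must approach some single $2\pi\ell$, normalizes by replacing $\mu_{nk}$ with $\mu_{nk}\uplus\delta_{2\pi\ell}$, and finally compares the wrapped and unwrapped kernels near $0$ to control the mass near the nonzero lattice points. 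You instead exploit the L\'evy--Khinchin parametrization of $\mc{ID}^{\sutimes}_\ast$ head-on: the identity $\eta_{\nu_{nk}}'(0)=\gamma_{nk}e^{-\sigma_{nk}(\mf{T})}=\int_{\mf{T}}\zeta\,d\nu_{nk}(\zeta)\to 1$ uniformly in $k$ yields $\sigma_{nk}(\mf{T})\to 0$ and $\gamma_{nk}\to 1$, you take the principal lift $\beta_{nk}\in(-\pi,\pi]$, bound $\sup_x\abs{F_{\mu_{nk}}(x+iy)-(x+iy)}$ by $\abs{\beta_{nk}}+\frac{1+e^{-y}}{1-e^{-y}}\sigma_{nk}(\mf{T})$, and finish with the Chebyshev-type inequality $\mu(\set{\abs{x}\geq\eps})\leq(1+y^2/\eps^2)\bigl(1+y\Imm G_\mu(iy)\bigr)$; all of these steps check out (in particular $1+y\Imm G_{\mu_{nk}}(iy)=\int\frac{x^2}{y^2+x^2}\,d\mu_{nk}$ does tend to $0$ uniformly in $k$ once $F_{\mu_{nk}}(iy)\to iy$). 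Your version is arguably cleaner and more quantitative: it identifies the canonical preimage explicitly (the one whose drift lies in the principal branch) rather than locating the correct shift $\ell$ by a blow-up argument, and it gives effective bounds. The trade-off is that it leans on the full L\'evy--Khinchin representation of $\mc{ID}^{\sutimes}_\ast$, whereas the paper's argument only needs the existence of \emph{some} preimage in $\mc{L}$ and recycles the density computation already carried out in Theorem~\ref{Thm:Wrapping}.
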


\begin{proof}
Fix small $\eps, \delta > 0$. Choose $n_0$ large enough that for all $n \geq n_0$ and all $k$,
\[
\nu_{nk} \left( \set{ \zeta \in \mf{T} : \abs{\zeta - 1} \geq \eps} \right) < \delta.
\]
Then for any $\mu_{nk} \in \mc{L}$ with $W(\mu_{nk}) = \nu_{nk}$,
\[
\mu_{nk}\left(\bigcap_{\ell \in \mf{Z}} \set{x \in \mf{R} : \abs{x - 2 \pi \ell} > \eps/2} \right) < \delta.
\]
On the other hand, using Poisson summation,
\[
\begin{split}
\mu_{nk} \left(\bigcap_{\ell \in \mf{Z}} \set{x \in \mf{R} : \abs{x - 2 \pi \ell} \leq \eps} \right)
& = \sum_{\ell \in \mf{Z}} \lim_{y \downarrow 0} \int_{-\eps}^\eps \Imm \frac{1}{F_{\mu_{nk}}(x + i y) + 2 \pi \ell} \,dx \\
& = \frac{1}{2\pi} \lim_{y \downarrow 0} \int_{-\eps}^\eps \frac{1 - e^{-2 \Imm F_{\mu_{nk}}(x + i y)}}{\abs{1 - e^{i F_{\mu_{nk}}(x + i y)}}^2} \,dx
\end{split}
\]
since the series consists of non-negative terms. Denote
\[
C = \frac{1}{2 \pi} \sup_{\substack{\abs{x} < \eps \\ 0 < y < \eps}} \frac{1 - e^{-2 \Imm F_{\mu_{nk}}(x + i y)}}{\abs{1 - e^{i F_{\mu_{nk}}(x + i y)}}^2}.
\]
If $C < \infty$, then we get $2 C \eps \geq 1 - \delta$, giving a contradiction for sufficiently small $\eps, \delta$. Thus $C = \infty$. So for some $x, y$ as above and some $\ell \in \mf{Z}$, $\abs{F_{\mu_{nk}}(x + i y) - 2 \pi \ell} < \delta$. Since $F_{\mu_{nk}}$ is continuous, this implies that for sufficiently small $\eps$ the same property holds for all $x, y$ as above. By replacing $\mu_{nk}$ with $\mu_{nk} \uplus \delta_{2 \pi \ell}$, we may assume that $\ell = 0$. Note that
\[
\frac{1}{2 \pi} \frac{1 - e^{-2 \Imm F_{\mu_{nk}}(x + i y)}}{\abs{1 - e^{i F_{\mu_{nk}}(x + i y)}}^2} - \frac{1}{\pi} \frac{\Imm F_{\mu_{nk}}(x + i y)}{\abs{F_{\mu_{nk}}(x + i y)}^2}
= \frac{1}{\pi} \frac{O(\abs{F_{\mu_{nk}}(x + i y)}^2)}{\abs{F_{\mu_{nk}}(x + i y)}^2} < C'
\]
for some constant $C'$ independent of $x,y$. Thus
\[
\mu_{nk} \left(\bigcap_{\ell \neq 0} \set{x \in \mf{R} : \abs{x - 2 \pi \ell} \leq \eps} \right)
< 2 C' \eps,
\]
and so
\[
\mu_{nk} \left( \set{ x \in \mf{R} : \abs{x} \geq \eps} \right) < \delta + 2 C' \eps.
\]
Since $C'$ decreases with $\eps$, by choosing $\eps$ sufficiently small we may achieve the bound of $2 \delta$, proving that the array is infinitesimal.
\end{proof}

\begin{Lemma}
\label{Lemma:Injective}
Let $\mu \in \mc{L}$. $F_\mu$ is injective if and only if $\eta_{W(\mu)}$ is.
\end{Lemma}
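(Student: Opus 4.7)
The plan is to exploit the relation $\eta_{W(\mu)}(e^{iz}) = \exp(iF_\mu(z))$ from Theorem~\ref{Thm:Wrapping} together with the defining property $F_\mu(z+2\pi) = F_\mu(z) + 2\pi$ of $F_{\mc{L}}$. The map $z \mapsto e^{iz}$ is a covering $\mf{C}^+ \to \mf{D}\setminus\set{0}$ with deck transformations $z \mapsto z + 2\pi k$, $k \in \mf{Z}$, and the periodicity identity says precisely that $F_\mu$ is equivariant for these deck transformations acting on both source and target. Injectivity on $\mf{C}^+$ and on $\mf{D}\setminus\set{0}$ should therefore be equivalent; the possible value $0 \in \mf{D}$ only causes trouble if $\eta_{W(\mu)}$ vanishes away from $0$, which is ruled out by the characterization \eqref{Multi-Boolean} since $W(\mu) \in \mc{ID}^{\sutimes}_\ast$.

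For the direction ($\Rightarrow$), assume $F_\mu$ is injective, and suppose $\eta_{W(\mu)}(w_1) = \eta_{W(\mu)}(w_2)$ with $w_j \in \mf{D}$. Since $\eta_{W(\mu)}$ vanishes only at $0$, either both $w_j = 0$ (done) or both are nonzero, so write $w_j = e^{iz_j}$ with $z_j \in \mf{C}^+$. Then $\exp(iF_\mu(z_1)) = \exp(iF_\mu(z_2))$, hence $F_\mu(z_1) - F_\mu(z_2) = 2\pi k$ for some $k \in \mf{Z}$. Using periodicity, $F_\mu(z_1) = F_\mu(z_2) + 2\pi k = F_\mu(z_2 + 2\pi k)$, and injectivity of $F_\mu$ forces $z_1 = z_2 + 2\pi k$, so $w_1 = w_2$.

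For the direction ($\Leftarrow$), assume $\eta_{W(\mu)}$ is injective, and suppose $F_\mu(z_1) = F_\mu(z_2)$ with $z_j \in \mf{C}^+$. Applying $e^{i\cdot}$ gives $\eta_{W(\mu)}(e^{iz_1}) = \eta_{W(\mu)}(e^{iz_2})$, so $e^{iz_1} = e^{iz_2}$ and $z_1 = z_2 + 2\pi k$ for some $k \in \mf{Z}$. By periodicity again, $F_\mu(z_1) = F_\mu(z_2) + 2\pi k$; combined with $F_\mu(z_1) = F_\mu(z_2)$, this forces $k = 0$, so $z_1 = z_2$.

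There is no serious obstacle here; the argument is essentially a covering-space bookkeeping that uses nothing beyond the identities already established in Theorem~\ref{Thm:Wrapping}, Lemma~\ref{Lemma:Conditions}, and the description \eqref{Multi-Boolean} of $\eta_{\mc{ID}^{\sutimes}_\ast}$. The only subtle point to mention is the behavior at the origin, which is handled by the fact that for $\mu \in \mc{L}$ the transform $\eta_{W(\mu)}$ vanishes only at $0$.
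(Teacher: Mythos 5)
Your proof is correct and follows essentially the same route as the paper's: both arguments rest on the identity $\eta_{W(\mu)}(e^{iz})=\exp(iF_\mu(z))$ and the $2\pi$-equivariance of $F_\mu$, with the only difference being that you argue directly where the paper argues by contrapositive, and you additionally handle the point $0\in\mf{D}$ explicitly via \eqref{Multi-Boolean} (a detail the paper leaves implicit). No gaps.
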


\begin{proof}
Suppose $F_\mu(z) = F_\mu(w)$. Then $\eta_{W(\mu)} (e^{i z}) = \eta_{W(\mu)} (e^{i w})$. If $z \neq w$, then by definition of $\mc{L}$, $2 \pi \nmid (z - w)$, and so $e^{i z} \neq e^{i w}$. Conversely, if $\eta_{W(\mu)} (e^{i z}) = \eta_{W(\mu)} (e^{i w})$, then $F_\mu(z) = F_\mu(w) + 2 \pi n = F_\mu(w + 2 \pi n)$ for some $n$. If $e^{i z} \neq e^{i w}$, then $z \neq w + 2 \pi n$.
\end{proof}

\section{Applications}
\label{Section:Applications}

\subsection{Multiplicative Belinschi-Nica transformations and the divisibility indicator}

Transformations $\mf{M}_t$, the multiplicative analogs of the Belinschi-Nica transformations, were defined in Section 4 of \cite{Ariz-Hasebe-Semigroups}. The definition requires some care since because of the non-uniqueness of multiplicative convolution powers one has different choices for $\mf{M}_t$. In \cite{Zhong-Free-BM} this ambiguity was resolved by chosing measures with positive mean.  Here, we define $\mf{M}_t$ on $\mc{ID}^{\sutimes}_\ast$ by
\begin{equation}
\label{Mt}
\mf{M}^{(n)}_t(W(\mu)) = W(\mf{B}_t(\mu)),
\end{equation}
where
\[
\mu \in \mc{L}_n := \set{\widetilde{\mu} \in \mc{L} : \Imm f_{\widetilde{\mu}}(0)\in[2n\pi,2(n+1)\pi)}.
\]
Note that for each $n$, $W : \mc{L}_n \rightarrow \mc{ID}^{\sutimes}_\ast$ is a bijection, so each $\mf{M}^{(n)}_t$ is well defined and onto. We will denote $\mf{M}_t = \mf{M}_t^{(0)}$. It is also easy to see, as in Lemma~\ref{Lemma:L-convolution}, that
\begin{equation}
\label{B_t-L_n}
\mf{B}_t(\mc{L}_n) \subset \mc{L}_n.
\end{equation}

The following is a noticeably shorter proof of Theorem 4.5  from \cite{Ariz-Hasebe-Semigroups}.

\begin{Cor}
$\set{\mf{M}^{(n)}_t : t \geq 0}$ form a semigroup: $\mf{M}^{(n)}_t \circ \mf{M}^{(n)}_s =\mf{M}^{(n)}_{t+s}$. Also, $\mf{M}^{(n)}_1 = \mf{M}$, the multiplicative Boolean-to-free Bercovici-Pata bijection.
\end{Cor}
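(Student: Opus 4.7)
The plan is to transfer the semigroup property from the additive Belinschi-Nica maps $\mf{B}_t$ to their multiplicative versions via the wrapping homomorphism. Recall that by the defining relation \eqref{Mt}, $\mf{M}^{(n)}_t(\nu) = W(\mf{B}_t(\mu))$, where $\mu$ is the unique preimage of $\nu$ in $\mc{L}_n$ under $W$, and that by \eqref{B_t-L_n} the additive transformation $\mf{B}_t$ maps $\mc{L}_n$ into itself. These are the only inputs needed beyond facts already established for the additive $\mf{B}_t$ (notably its semigroup property and its value at $t=1$).

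First I would prove the semigroup property. Fix $\nu \in \mc{ID}^{\sutimes}_\ast$ and let $\mu \in \mc{L}_n$ be the unique measure with $W(\mu) = \nu$. By \eqref{B_t-L_n}, $\mf{B}_s(\mu) \in \mc{L}_n$, so the defining relation \eqref{Mt} can be reapplied with the measure $\mf{B}_s(\mu)$, giving
\[
\mf{M}^{(n)}_t\bigl(\mf{M}^{(n)}_s(\nu)\bigr)
= \mf{M}^{(n)}_t\bigl(W(\mf{B}_s(\mu))\bigr)
= W\bigl(\mf{B}_t(\mf{B}_s(\mu))\bigr).
\]
Invoking the classical semigroup property $\mf{B}_t \circ \mf{B}_s = \mf{B}_{t+s}$ of the additive Belinschi-Nica transformations from \cite{Belinschi-Nica-B_t}, this equals $W(\mf{B}_{t+s}(\mu)) = \mf{M}^{(n)}_{t+s}(\nu)$, as required.

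For the identification $\mf{M}^{(n)}_1 = \mf{M}$, I would use the Belinschi-Nica theorem that $\mf{B}_1 = \mc{BP}_{\uplus \rightarrow \boxplus}$ on $\mc{P}(\mf{R})$, i.e.\ the additive Boolean-to-free Bercovici-Pata bijection. By the last assertion of Proposition~\ref{Prop:BP-intertwiner}, $W$ intertwines $\mc{BP}_{\uplus \rightarrow \boxplus}$ (restricted to $\mc{L}$) with its multiplicative counterpart $\mc{BP}_{\sutimes \rightarrow \boxtimes}$, which is by definition the multiplicative Boolean-to-free Bercovici-Pata bijection $\mf{M}$. Hence for $\mu \in \mc{L}_n$ and $\nu = W(\mu)$,
\[
\mf{M}^{(n)}_1(\nu) = W(\mf{B}_1(\mu)) = W\bigl(\mc{BP}_{\uplus \rightarrow \boxplus}(\mu)\bigr) = \mc{BP}_{\sutimes \rightarrow \boxtimes}(W(\mu)) = \mf{M}(\nu),
\]
and since $W : \mc{L}_n \to \mc{ID}^{\sutimes}_\ast$ is onto, this finishes the proof.

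There is no real obstacle; the argument is a textbook transfer through a semigroup homomorphism. The one point worth emphasising is that the defining relation \eqref{Mt} fixes the index $n$, so in order to iterate it one needs to know that $\mf{B}_s(\mu)$ belongs to the same stratum $\mc{L}_n$ as $\mu$; this is exactly the content of \eqref{B_t-L_n}, which takes the place of the more delicate ``positive mean'' convention used in \cite{Zhong-Free-BM}.
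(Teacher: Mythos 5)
Your proof is correct and follows essentially the same route as the paper: the semigroup property comes from iterating the defining relation \eqref{Mt} (justified by \eqref{B_t-L_n}) together with $\mf{B}_t \circ \mf{B}_s = \mf{B}_{t+s}$, and the identification $\mf{M}^{(n)}_1 = \mf{M}$ comes from $\mf{B}_1 = \mc{BP}_{\uplus\rightarrow\boxplus}$ and the intertwining in Proposition~\ref{Prop:BP-intertwiner}. Your formulation of the second part, starting from an arbitrary $\mu \in \mc{L}_n$ rather than from the canonical representative $\nu_\uplus^{\alpha,\tau}$, neatly sidesteps the small stratum-shift remark ($\mf{B}_1(\mu \boxplus \delta_{2\pi}) = \mf{B}_1(\mu)\boxplus\delta_{2\pi}$) that the paper has to insert.
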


\begin{proof}
For $\mu \in \mc{L}_n$, using \eqref{B_t-L_n} and Theorem 1.1 from \cite{Belinschi-Nica-B_t},
\[
\mf{M}^{(n)}_t \circ \mf{M}^{(n)}_s(W(\mu)) = W(\mf{B}_t \circ \mf{B}_s(\mu)) = W(\mf{B}_{t+s}(\mu)) = \mf{M}^{(n)}_{t+s}(W(\mu)).
\]
Similarly, using Theorem 1.2 from \cite{Belinschi-Nica-B_t} and Proposition~\ref{Prop:BP-intertwiner},
\[
\mf{M}^{(n)}_1(\nu_{\sutimes}^{\gamma, \sigma}) = \mf{M}^{(n)}_1(W(\nu_{\uplus}^{\alpha, \tau})) = W(\mf{B}_1(\nu_{\uplus}^{\alpha, \tau})) = W(\nu_{\boxplus}^{\alpha, \tau}) = \nu_\boxtimes^{\gamma, \sigma}
\]
where in the second identity we used the fact that $\mf{B}_1(\mu\boxplus \delta_{2\pi})=\mf{B}_1(\mu)\boxplus \delta_{2\pi}$ for $\mu\in\mc{L}$.
\end{proof}

In the same manner,  as pointed out in \cite{Ariz-Hasebe-Semigroups}, the multiplicative analog of the commutation relation proved in \cite{Belinschi-Nica-B_t} $$  \left(\mu^{\boxplus p} \right)^{\uplus q},
=  \left(\mu^{\uplus p'} \right)^{\boxplus q'}, ~~~q' = 1 - p + p q, p q = p' q'$$
needs some care because the multiplicative powers are multi-valued. We will solve this problem by the use of  Propostion \ref{Prop:ID-preimage}.
Define $$\nu^{\boxtimes_\phi p}=W(\mu^{\boxplus p}),  \quad \text{and} \quad \nu^{\sutimes_\phi p}=W(\mu^{\uplus p}) $$
where $\mu \in \mc{L}_{[\phi]}$ and $[\phi]$ denotes the integer part of $\phi$.
Now we can recover Proposition 4.4 in \cite{Ariz-Hasebe-Semigroups}.

\begin{Cor}\label{Cor:commutation}
Let $a=(2\pi)^{-1}\arg m_1(\nu)\in \mathbb{R}$ be an arbitrary argument of the first moment $m_1(\nu)$. For any $\nu \in \mc{ID}^{\sutimes}_\ast$, and $p \geq 1$ and $1 - 1/p < q$,
\[
\left(\nu^{\boxtimes_{a}~p} \right)^{\sutimes_{ap} ~q} = \left(\nu^{\sutimes_a~q'} \right)^{\boxtimes_{aq'} p'},
\]
where $q' = 1 - p + p q$, $p q = p' q'$.
\end{Cor}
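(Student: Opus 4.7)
My plan is to lift the identity to the additive side using the homomorphism property of $W$ from Theorem~\ref{Thm:Homomorphism}, invoke the classical Belinschi-Nica commutation relation inside $\mc{L}$, and apply $W$ back. To begin, choose the canonical preimage $\mu \in \mc{L}_{[a]}$ with $W(\mu) = \nu$. The identity $m_1(\nu) = e^{if_\mu(0)}$, obtained by letting $z \to i\infty$ in $\eta_{W(\mu)}(e^{iz})/e^{iz} = e^{if_\mu(e^{iz})}$, confirms that the prescribed real lift $a$ of $(2\pi)^{-1}\arg m_1(\nu)$ pins $\mu$ down uniquely within its $\delta_{2\pi}$-equivalence class.

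By Lemma~\ref{Lemma:L-convolution}, both $\mu^{\boxplus p}$ and $\mu^{\uplus q'}$ remain in $\mc{L}$, and the classical Belinschi-Nica identity (Theorem~1.2 of \cite{Belinschi-Nica-B_t}), applied to $\mu\in\mc{L}\subset\mc{P}(\real)$, yields the single-valued identity
\[
\bigl(\mu^{\boxplus p}\bigr)^{\uplus q} = \bigl(\mu^{\uplus q'}\bigr)^{\boxplus p'}.
\]
The substantive step is to verify that each composite realizes the canonical preimage prescribed by the definitions $\nu^{\boxtimes_\phi p} = W(\mu^{\boxplus p})$ and $\nu^{\sutimes_\phi p} = W(\mu^{\uplus p})$ for $\mu \in \mc{L}_{[\phi]}$. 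Combining $\phi_{\mu^{\boxplus p}} = p\phi_\mu$ with the asymptotic $\phi_\mu(z) \to -f_\mu(0)$ as $z \to i\infty$, one reads off $f_{\mu^{\boxplus p}}(0) = p\, f_\mu(0)$, while the Boolean definition gives $f_{\mu^{\uplus q'}}(0) = q'\, f_\mu(0)$ directly. Consequently $\mu^{\boxplus p}$ sits in the $\mc{L}_{[ap]}$ slot and $\mu^{\uplus q'}$ in the $\mc{L}_{[aq']}$ slot; iterating, $(\mu^{\uplus q'})^{\boxplus p'} \in \mc{L}_{[aq'p']} = \mc{L}_{[apq]}$, matching the index $(\mu^{\boxplus p})^{\uplus q} \in \mc{L}_{[apq]}$ on the other side via $p'q' = pq$.

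Applying $W$ to both sides of the additive identity and unfolding the definitions then produces
\[
(\nu^{\boxtimes_a p})^{\sutimes_{ap} q} = W\bigl((\mu^{\boxplus p})^{\uplus q}\bigr) = W\bigl((\mu^{\uplus q'})^{\boxplus p'}\bigr) = (\nu^{\sutimes_a q'})^{\boxtimes_{aq'} p'},
\]
which is exactly the claim. The hypotheses $p \geq 1$ and $q > 1 - 1/p$ are precisely what ensures $q' > 0$ and $p' \geq 1$, so that the additive Belinschi-Nica commutation relation is available and all free and Boolean powers appearing are well-defined. I expect the main obstacle to be purely bookkeeping: tracking the $[\phi]$ index correctly across the four multiplicative powers, where the first-moment formula $m_1(\nu) = e^{if_\mu(0)}$ is the precise device that converts the additive rescalings $f \mapsto pf$ and $f \mapsto q'f$ into the multiplicative index shifts $a \mapsto ap$ and $a \mapsto aq'$.
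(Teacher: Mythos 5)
Your proof is correct and follows essentially the same route as the paper's: fix the unique preimage $\mu$ of $\nu$ in $\mc{L}_{[a]}$ determined by the chosen argument of $m_1(\nu)$, check via $f_{\mu^{\boxplus p}}(0)=p\,f_\mu(0)$ and $f_{\mu^{\uplus q'}}(0)=q'f_\mu(0)$ that the canonical preimages of the successive powers land in the prescribed slots $\mc{L}_{[ap]}$ and $\mc{L}_{[aq']}$, and then transport the additive Belinschi--Nica commutation relation through $W$. Your version is in fact slightly more careful than the paper's about deriving the index bookkeeping and about why the hypotheses on $p,q$ guarantee $q'>0$ and $p'\geq 1$; the only discrepancy is the citation (the paper invokes Proposition~3.1 of \cite{Belinschi-Nica-B_t} for the commutation identity), which is immaterial.
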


\begin{proof}
From Proposition \ref{Prop:A-on-functions} and Theorem \ref{Thm:Wrapping} one can see that for any choice of argument of $m_1(\nu)$ there is a unique $\mu$ such that $\nu=W(\mu)$ and $\Imm f_\mu(0)=\arg(\eta_\nu'(0))=\arg m_1(\nu)=2\pi a$, so that $\mu \in \mc{L}_{[a]}$.  Since $\Imm f_{\mu^{\boxplus p}}(0)=2 \pi ap$ then $\mu^{\boxplus p}\in\mc{L}_{[ap]}$ and we have
\[
\left(\nu^{\boxtimes_{a}~p} \right)^{\sutimes_{ap} ~q}=\left(W(\mu^{\boxplus p}) \right)^{\sutimes_{ap}~q}
= W \left( \left(\mu^{\boxplus p} \right)^{\uplus q} \right)
\]
Similarly, since $\Imm f_{\mu^{\uplus q'}}(0)=2\pi a q'$ one sees, that
\[ \left(\nu^{\sutimes_a~q'} \right)^{\boxtimes_{aq'} p'}= W\left(\left(\mu^{\uplus p'} \right)^{\boxplus q'}\right).
\]

Using Proposition 3.1 from \cite{Belinschi-Nica-B_t} we obtain the claim.
 \end{proof}

Recall that the (additive) divisibility indicator was defined in Definition~1.4 of \cite{Belinschi-Nica-B_t} as
\[
\sup \set{t \geq 0: \mu \in \mf{B}_t(\mc{P}(\mf{R}))},
\]
while the multiplicative divisibility indicator was defined in Definition~4.6 of \cite{Ariz-Hasebe-Semigroups} as
\[
\theta(\nu):=\sup \set{t \geq 0 : \nu \in \mf{M}_t(\mc{ID}^{\sutimes}_\ast)}.
\]

Note that in the second definition, we have excluded the Haar measure and that we omitted the superscript $n$ in $\mf{M}_t$. That we can choose any $n$ is a consequence of the next proposition.

\begin{Prop}
For $\mu \in \mc{L}$, the additive divisibility indicator of $\mu$ is equal to the multiplicative divisibility indicator of $W(\mu)$.
\end{Prop}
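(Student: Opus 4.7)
The plan is to translate the additive divisibility condition on $\mu$ into a multiplicative one on $W(\mu)$ via the intertwining \eqref{Mt}, reducing the ambient class at each step so that the bijection $W$ can be applied.

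First, I would reduce the additive indicator to one over $\mc{L}$: for any $\widetilde{\mu} \in \mc{P}(\mf{R})$ with $\mf{B}_t(\widetilde{\mu}) = \mu$, Lemma~\ref{Lemma:L-convolution} (applied to $\mu \in \mc{L}$) forces $\widetilde{\mu} \in \mc{L}$. Hence the additive divisibility indicator of $\mu$ equals $\sup\set{t \geq 0 : \mu \in \mf{B}_t(\mc{L})}$.

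Second, I would stratify by the strips $\mc{L}_n$: let $n_0$ be the unique integer with $\mu \in \mc{L}_{n_0}$. Since $\mc{L} = \bigsqcup_n \mc{L}_n$ and \eqref{B_t-L_n} gives $\mf{B}_t(\mc{L}_n) \subset \mc{L}_n$, any $\widetilde{\mu} \in \mc{L}$ with $\mf{B}_t(\widetilde{\mu}) = \mu$ must itself lie in $\mc{L}_{n_0}$. So the indicator equals $\sup\set{t \geq 0 : \mu \in \mf{B}_t(\mc{L}_{n_0})}$.

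Third, I would apply $W$. Using the bijection $W : \mc{L}_{n_0} \to \mc{ID}^{\sutimes}_\ast$ and the defining relation \eqref{Mt}, which says $W \circ \mf{B}_t = \mf{M}_t^{(n_0)} \circ W$ on $\mc{L}_{n_0}$, the condition $\mu \in \mf{B}_t(\mc{L}_{n_0})$ is equivalent to $W(\mu) \in \mf{M}_t^{(n_0)}(\mc{ID}^{\sutimes}_\ast)$. Thus the additive divisibility indicator of $\mu$ equals $\sup\set{t \geq 0 : W(\mu) \in \mf{M}_t^{(n_0)}(\mc{ID}^{\sutimes}_\ast)}$.

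The final, and main, obstacle is matching this with $\theta(W(\mu))$, which is defined using the specific branch $\mf{M}_t = \mf{M}_t^{(0)}$: I would need to show that the supremum above does not depend on the branch $n$. This is precisely the content of the remark preceding the proposition. The argument is that any two branches $\mf{M}_t^{(n)}$ and $\mf{M}_t^{(m)}$ are related by post-composition with $\utimes \delta_{e^{2\pi i(n-m)t}}$, the deterministic rotation coming from the ambiguity in multiplicative convolution powers. Since such a rotation is a bijection of $\mc{ID}^{\sutimes}_\ast$, the ranges $\mf{M}_t^{(n)}(\mc{ID}^{\sutimes}_\ast)$ all coincide, so one may replace $n_0$ by $0$. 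This yields the desired equality.
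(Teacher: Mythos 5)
Your first three steps are correct and follow the same skeleton as the paper's own argument: Lemma~\ref{Lemma:L-convolution} forces any $\mf{B}_t$-preimage of $\mu$ into $\mc{L}$, the inclusion \eqref{B_t-L_n} (together with the disjointness of the strips) pins it into the strip $\mc{L}_{n_0}$ containing $\mu$, and the bijection $W : \mc{L}_{n_0} \rightarrow \mc{ID}^{\sutimes}_\ast$ together with \eqref{Mt} converts the membership condition into $W(\mu) \in \mf{M}_t^{(n_0)}(\mc{ID}^{\sutimes}_\ast)$.

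The gap is in your final step. First, the branches are not related by post-composition with $\utimes\, \delta_{e^{2\pi i (n-m)t}}$. Writing $\nu = W(\mu)$ with $\mu \in \mc{L}_0$, one also has $\nu = W(\mu \uplus \delta_{2\pi n})$ with $\mu \uplus \delta_{2\pi n} \in \mc{L}_n$, and $\phi_{(\mu \uplus \delta_{2\pi n})^{\boxplus (1+t)}} = (1+t)\phi_\mu + 2\pi n(1+t)$; the extra term equals $2\pi n t$ modulo $2\pi$, so the two choices of $\nu^{\boxtimes(1+t)}$ differ by a genuine \emph{rotation} of the measure by $e^{-2\pi i n t}$, not by a Boolean point-mass convolution. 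Carrying this through the Boolean $\frac{1}{1+t}$-power gives $\eta_{\mf{M}_t^{(n)}(\nu)}(w) = e^{2\pi i n t}\, \eta_{\mf{M}_t^{(0)}(\nu)}\bigl(e^{-2\pi i n t} w\bigr)$, i.e.\ the branches differ by a rotation-conjugation, which is not of the form $\eta \mapsto \gamma \eta$. Second, and more seriously, even if the branches did differ by \emph{post}-composition with some bijection $T$ of $\mc{ID}^{\sutimes}_\ast$, that only yields that the range of $\mf{M}_t^{(n)}$ is $T$ applied to the range of $\mf{M}_t^{(0)}$; these sets coincide only if $T$ preserves that range, which requires a further commutation argument (one can in fact check that the rotation-conjugation above commutes with $\mf{M}_t^{(0)}$, but this must be proved). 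Third, you cannot appeal to ``the remark preceding the proposition'': that remark explicitly states that branch-independence of $\theta$ is a \emph{consequence} of this proposition, so invoking it here is circular. The paper closes exactly this gap on the additive side: it replaces $\mu \in \mc{L}_n$ by $\widetilde{\mu} = \mu \uplus \delta_{-2\pi n} \in \mc{L}_0$ (which has the same image under $W$) and invokes Proposition~3.8 of \cite{Ariz-Hasebe-Semigroups} to conclude that $\mu \in \mf{B}_t(\mc{L}_n)$ implies $\widetilde{\mu} \in \mf{B}_t(\mc{L}_0)$, i.e.\ that the image of $\mf{B}_t$ is stable under Boolean convolution with point masses. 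Your proof needs this fact, or an equivalent statement about the ranges of the $\mf{M}_t^{(n)}$, as an explicit and justified input.
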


\begin{proof}
Let $\mu \in \mf{B}_t(\mc{P}) \cap \mc{L}_n$. Then by Lemma~\ref{Lemma:L-convolution} and \eqref{B_t-L_n}, in fact $\mu \in \mf{B}_t(\mc{L}_n)$. Denote $\widetilde{\mu} = \mu \uplus \delta_{- 2 \pi n}$. Then by Proposition 3.8 in \cite{Ariz-Hasebe-Semigroups}, $\widetilde{\mu} \in \mf{B}_t(\mc{L}_0)$. So \eqref{Mt} implies that $W(\widetilde{\mu}) \in \mf{M}_t(\mc{ID}^{\sutimes}_\ast)$. Conversely, let $\mu \in \mc{L}_n$ with $W(\mu)$ in the image of $\mf{M}_t$. Since $W : \mc{L}_0 \rightarrow \mc{ID}^{\sutimes}_\ast$ is onto, for some $\widetilde{\mu} \in \mc{L}$,
\[
W(\mu) = \mf{M}_t(W(\widetilde{\mu}) = W(\mf{B}_t(\widetilde{\mu})).
\]
Therefore
\[
\mu \uplus \delta_{2 \pi n} = \mf{B}_t(\widetilde{\mu})
\]
for some $n$, which again implies that $\mu$ is in the image of $\mf{B}_t$.
\end{proof}

The following result is the analog of Proposition~5.1 from \cite{Belinschi-Nica-B_t}, and follows immediately from it and elementary properties of $W$.

\begin{Prop}
Let $\nu \in \mc{ID}^{\sutimes}_\ast$ and $t > 0$. Then
\begin{enumerate}
\item
$\mf{M}_t(\nu)$ has no singular continuous part.
\item
$\mf{M}_t(\nu)$ has at most $[1/t]$ atoms for $t \leq 1$ and at most one atom for $t > 1$.
\item
The absolutely continuous part of $\mf{M}_t(\nu)$ is zero if and only if $\nu$ is a point mass. Its density is analytic whenever positive and finite.
\end{enumerate}
\end{Prop}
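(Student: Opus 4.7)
The plan is to reduce everything to the additive Proposition~5.1 of \cite{Belinschi-Nica-B_t} by transferring across the wrapping map. Given $\nu \in \mc{ID}^{\sutimes}_\ast$, I would invoke the fact that $W : \mc{L}_0 \rightarrow \mc{ID}^{\sutimes}_\ast$ is a bijection to write $\nu = W(\mu)$ for a unique $\mu \in \mc{L}_0$. By the defining identity \eqref{Mt}, we then have $\mf{M}_t(\nu) = W(\mf{B}_t(\mu))$, and by \eqref{B_t-L_n} the measure $\mf{B}_t(\mu)$ lies in $\mc{L}_0 \subset \mc{L}$. Thus to prove each of (a)--(c) for $\mf{M}_t(\nu)$, it suffices to establish the corresponding property for $\mf{B}_t(\mu)$ (already known from \cite{Belinschi-Nica-B_t}) and then check that $W$ carries this property over to $\mf{M}_t(\nu)$.

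For part (a), the additive Belinschi--Nica result gives that $\mf{B}_t(\mu)$ has no singular continuous part, and Proposition~\ref{Prop:Absolute-continuity}(a) asserts that each piece of the Lebesgue decomposition (absolutely continuous, singular, purely atomic) is preserved under $W$, so $\mf{M}_t(\nu)$ inherits the absence of a singular continuous component. For part (b), the additive theorem gives the same cardinality bounds on the atoms of $\mf{B}_t(\mu)$; by Proposition~\ref{Prop:Atoms0} every atom of $\mf{B}_t(\mu)$ produces an atom of $W(\mf{B}_t(\mu))$ of equal weight, and Corollary~\ref{Cor:L-atoms-1} forbids two distinct atoms of $\mf{B}_t(\mu) \in \mc{L}$ from lying in the same coset modulo $2\pi$, so distinct atoms are not collapsed by $W$. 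Hence the atom count is preserved and the cardinality bounds transfer to $\mf{M}_t(\nu)$.

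For part (c), I would first note that $\mu \in \mc{L}_0$ is a point mass if and only if $\nu = W(\mu)$ is: one direction is immediate from \eqref{Wrapping}, and the other follows from Proposition~\ref{Prop:Absolute-continuity}(a) (purely atomic goes to purely atomic) together with Corollary~\ref{Cor:L-atoms-1} and Proposition~\ref{Prop:Atoms0} (a single atom of $\nu$ is the image of a unique atom of $\mu$, which must then account for all of $\mu$'s mass). Combining this with Proposition~\ref{Prop:Absolute-continuity}(a) and the additive statement, the absolutely continuous part of $\mf{M}_t(\nu)$ vanishes iff the AC part of $\mf{B}_t(\mu)$ vanishes iff $\mu$ is a point mass iff $\nu$ is a point mass. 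The analyticity claim transfers similarly: on any open arc $\{e^{-ix} : x \in I\}$ where the density of $\mf{M}_t(\nu)$ is positive and finite, the density formula derived in the proof of Theorem~\ref{Thm:Wrapping}, namely
\[
\frac{d(W(\mf{B}_t(\mu)))}{dx}(e^{-ix}) = \frac{1}{2\pi}\,\frac{1 - e^{-2 \Imm F_{\mf{B}_t(\mu)}(x+i0^+)}}{\bigl|1 - e^{i F_{\mf{B}_t(\mu)}(x+i0^+)}\bigr|^2},
\]
expresses that density as an explicit analytic function of $F_{\mf{B}_t(\mu)}$ restricted to the real line; since the corresponding density of $\mf{B}_t(\mu)$ is analytic wherever positive and finite (by Proposition~5.1(c) of \cite{Belinschi-Nica-B_t}), $F_{\mf{B}_t(\mu)}$ extends analytically across such an interval, and the formula above shows the density of $\mf{M}_t(\nu)$ does too.

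The only real issue is the bookkeeping around atoms, which is handled precisely by Corollary~\ref{Cor:L-atoms-1}; beyond that, the argument is essentially an application of the transfer principle encoded in the propositions of Section~\ref{Section:Transformation}, and no new analysis is required.
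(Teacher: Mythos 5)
Your proposal is correct and follows exactly the route the paper intends: the paper gives no written proof, stating only that the result ``follows immediately from [Proposition~5.1 of \cite{Belinschi-Nica-B_t}] and elementary properties of $W$,'' and your argument is precisely that transfer via $\mf{M}_t(\nu) = W(\mf{B}_t(\mu))$ together with Propositions~\ref{Prop:Atoms0} and \ref{Prop:Absolute-continuity} and Corollary~\ref{Cor:L-atoms-1}. The details you supply (injectivity of $W$ on atoms, the point-mass equivalence, and the Poisson-summation density formula for analyticity) are the right ones and are all sound.
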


Similarly, the following properties of the multiplicative divisibility indicator proved in \cite[Theorem 4.8]{Ariz-Hasebe-Semigroups} are now a direct consequence from their additive counterparts  proved in \cite{Belinschi-Nica-B_t}, parts (a-c), and \cite{Ariz-Hasebe-Semigroups}, parts (d),(e).

\begin{Cor}
 We consider a probability measure $\mu \in \mc{ID}^{\sutimes}_\ast $.
\begin{enumerate}
\item
 $\mu^{\boxtimes t}$ exists for any  $t \geq \max\{1-\theta(\mu),0 \}$.
\item
 $\mu$ is $\boxtimes$-infinitely divisible if and only if $\theta(\mu) \geq 1$.
\item
 $\theta(\mathbb{M}_t(\mu)) = \theta(\mu)+t$ for any  $t \geq -\theta(\mu)$.
\item
 $\theta(\mu^{\sutimes t}) = \frac{1}{t}\theta(\mu)$ for any $t > 0$.
\item
 $\theta(\mu^{\boxtimes \, t})-1 = \frac{1}{t}(\theta(\mu)-1)$ for any $t > \max \{1- \theta(\mu), 0 \}$.
\end{enumerate}
\end{Cor}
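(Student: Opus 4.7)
The plan is to transfer each statement from its known additive counterpart via the homomorphism $W$, using the preceding proposition that $\theta(\mu)$ equals the additive divisibility indicator of any lift of $\mu$ to $\mc{L}$. Concretely, for a given $\mu \in \mc{ID}^{\sutimes}_\ast$ I would fix a preimage $\tilde{\mu} \in \mc{L}$ with $W(\tilde{\mu}) = \mu$; by Theorem~\ref{Thm:Wrapping} such a $\tilde{\mu}$ exists and is unique modulo $\delta_{2\pi}$, and for (c) I would further arrange $\tilde{\mu} \in \mc{L}_0$ so that \eqref{Mt} reads $\mf{M}_t(\mu) = W(\mf{B}_t(\tilde{\mu}))$.

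With the lift in hand, each part reduces to a one-line application. For (a), the additive statement in \cite{Belinschi-Nica-B_t} yields $\tilde{\mu}^{\boxplus t}$ in the claimed range, so by Proposition~\ref{Prop:ID-preimage} the value $W(\tilde{\mu}^{\boxplus t})$ of $\mu^{\boxtimes t}$ exists in $\mc{ID}^{\sutimes}_\ast$. For (b), Proposition~\ref{Prop:ID-preimage} shows that $\mu \in \mc{ID}^{\boxtimes}_\ast$ is equivalent to $\tilde{\mu} \in \mc{L} \cap \mc{ID}^{\boxplus}$, and the additive version of (b) converts this to $\theta(\tilde{\mu}) \geq 1$, that is, $\theta(\mu) \geq 1$. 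For (c), the additive identity $\theta(\mf{B}_t(\tilde{\mu})) = \theta(\tilde{\mu}) + t$ transfers directly through $W$ using the preceding proposition. Parts (d) and (e) are identical in spirit: apply $W$ to the additive scaling relations $\theta(\tilde{\mu}^{\uplus t}) = \theta(\tilde{\mu})/t$ and $\theta(\tilde{\mu}^{\boxplus t}) - 1 = (\theta(\tilde{\mu}) - 1)/t$, and recall $\mu^{\sutimes t} = W(\tilde{\mu}^{\uplus t})$ and $\mu^{\boxtimes t} = W(\tilde{\mu}^{\boxplus t})$ (for the appropriate branch), noting that $\tilde{\mu}^{\uplus t}, \tilde{\mu}^{\boxplus t} \in \mc{L}$ by Lemma~\ref{Lemma:L-convolution}.

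The one technical obstacle is bookkeeping the multi-valuedness of $\mu^{\sutimes t}$ and $\mu^{\boxtimes t}$. By Proposition~\ref{Prop:ID-preimage} the different values differ only by $\sutimes$-convolution with a delta measure $\delta_{e^{2\pi i n t}}$; since $\mf{M}_s$ commutes with such shifts, as is visible after passing through $W$ to the additive $\mf{B}_s$ which commutes with translations by $\delta_{2\pi n t}$, the divisibility indicator is blind to the choice of branch. A parallel argument justifies the author's remark that the superscript $n$ in $\mf{M}_t^{(n)}$ may be dropped from the definition of $\theta$: measures in different strata $\mc{L}_n$ map to images differing by a delta-convolution on $\mf{T}$. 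Once this invariance is recorded, each of (a)--(e) is immediate from a single citation of the cited additive result together with the preceding proposition.
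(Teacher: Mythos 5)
Your proposal is correct and follows exactly the route the paper intends: the paper gives no written proof, merely asserting that each part is "a direct consequence" of its additive counterpart via the preceding proposition identifying $\theta(W(\mu))$ with the additive divisibility indicator, together with the homomorphism and power-compatibility properties of $W$. Your write-up simply makes explicit the lifting to $\mc{L}$, the citation of the additive results, and the branch-invariance bookkeeping that the paper leaves implicit.
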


\begin{Example}
\label{Example:Atoms}
Let $\sigma$ be a measure on $\mf{T}$ with finite support. Then the measure in $\nu \in\mc{L}$ corresponding to the pair $(\beta, \sigma)$ in Lemma \ref{Cara} is purely atomic and thus $\theta(\nu)=0.$  Since $\mf{M}_1 = \mf{M}$, it follows that $\theta(\mf{M}(\nu))=1$. Thus any measure whose $\Sigma$ transform is of the form \eqref{Sigma-transform} with $\sigma$ of finite support has the multiplicative divisibility indicator $1$. This includes the multiplicative free Gaussian, with the $\Sigma$ transform
\[
\Sigma(z) = \exp \left( \frac{1 + z}{1 - z} \right),
\]
and the multiplicative free Poisson law, with
\[
\Sigma(z) = \exp \left( \frac{1 + \zeta z}{1 - \zeta z} \right).
\]
\end{Example}

The following result was proved in the algebraic multivariate setting in Proposition 1.10 in \cite{Nica-Subordination}. It is a re-formulation of Theorem 2.8 in \cite{Zhong-Free-BM}, and generalizes Theorem 1.6 in \cite{Belinschi-Nica-B_t}. See also \cite{Ans-Free-evolution-2}.
\begin{Thm}
\label{Thm:Additive-evolution}
Given $\tau\in \mc{ID}^\boxplus$ and $\nu \in \mc{P}(\mf{R})$, denote $\mu = \tau \boxright \nu$. Then
\[
\mf{B}_t(\tau \boxright \nu) = \tau \boxright (\nu \boxplus \tau^{\boxplus t}),
\]
\end{Thm}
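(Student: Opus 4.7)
The plan is to verify the identity at the level of reciprocal Cauchy transforms, by showing that both sides satisfy the same implicit equation. The key preparatory step is to record the identity
\[
\phi_{\tau \boxright \nu}(z) = \phi_\tau(F_\nu(z)),
\]
which follows immediately from $F_{\tau \boxright \nu}^{-1} = F_{\tau \boxplus \nu}^{-1} \circ F_\nu$, the free convolution relation $\phi_{\tau \boxplus \nu} = \phi_\tau + \phi_\nu$, and the cancellation $\phi_\nu(F_\nu(z)) = z - F_\nu(z)$.

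Next, I would unpack the left-hand side. Writing $\mu = \tau \boxright \nu$ and letting $u = F_{\mu^{\boxplus(1+t)}}(z)$, the definition $\phi_{\mu^{\boxplus(1+t)}} = (1+t)\phi_\mu$ together with the identity above gives the implicit equation $z = u + (1+t)\phi_\tau(F_\nu(u))$. Substituting $v = F_\nu(u)$ converts this to
\[
z = F_\nu^{-1}(v) + (1+t)\phi_\tau(v).
\]
Unwinding the definition of $\mf{B}_t$ then yields $F_{\mf{B}_t(\mu)}(z) = z + \tfrac{1}{1+t}(u - z) = z - \phi_\tau(v)$.

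For the right-hand side, set $\sigma_s = \nu \boxplus \tau^{\boxplus s}$, so that the target is $F_{\tau \boxright \sigma_t} = F_{\sigma_t}^{-1}\circ F_{\sigma_{1+t}}$. From the additive $R$-transform arithmetic one computes $F_{\sigma_{1+t}}^{-1}(v) = F_\nu^{-1}(v) + (1+t)\phi_\tau(v)$ and $F_{\sigma_t}^{-1}(v) = F_\nu^{-1}(v) + t\phi_\tau(v)$. Thus if $v = F_{\sigma_{1+t}}(z)$, it satisfies the \emph{same} implicit equation as above, and $F_{\tau \boxright \sigma_t}(z) = F_{\sigma_t}^{-1}(v) = z - \phi_\tau(v)$. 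Both sides therefore coincide.

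The main obstacle is not the algebra itself, which is a short bookkeeping of inverses, but rather justifying that the identities---originally valid only on Stolz angles where the various $F^{-1}$ and $\phi$ exist---extend to all of $\mf{C}^+$ by analytic continuation, and that the implicit equation for $v$ has a consistent analytic solution on the common domain. This is handled in the standard way, using the fact that each factor $\phi_\tau$, $F_\nu^{-1}$, $F_{\sigma_{1+t}}$ is analytic in a truncated Stolz angle and the resulting identity of meromorphic functions propagates to $\mf{C}^+$.
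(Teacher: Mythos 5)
Your computation is correct, and it takes a genuinely different route from the paper. The paper's proof is essentially a two-line reduction: it rewrites the hypothesis and the desired conclusion as the functional identities (z1) and (z2) and then cites Theorem 2.8 of Zhong for the implication between them. You instead give a self-contained verification: starting from $\phi_{\tau\boxright\nu}=\phi_\tau\circ F_\nu$ (an identity the paper itself only records in a closing Remark, as $\phi_{\mu\boxright\nu}=\phi_\mu\circ F_\nu$), you show by $\phi$-transform arithmetic that both $F_{\mf{B}_t(\mu)}(z)$ and $F_{\tau\boxright(\nu\boxplus\tau^{\boxplus t})}(z)$ equal $z-\phi_\tau(v)$, where $v$ solves $z=F_\nu^{-1}(v)+(1+t)\phi_\tau(v)$; the identification $\tau\boxplus(\nu\boxplus\tau^{\boxplus t})=\nu\boxplus\tau^{\boxplus(1+t)}$ is used correctly, and the hypotheses $\tau\in\mc{ID}^\boxplus$ (for $\tau^{\boxplus t}$, $t<1$) and $1+t\geq 1$ (for $\mu^{\boxplus(1+t)}$) are exactly what make every object well defined. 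In effect you reprove the special case of Zhong's theorem that the paper outsources, which makes the argument self-contained; the price is the analytic bookkeeping you flag at the end. That step does deserve one more sentence: the two candidate solutions $v_1=F_\nu\bigl(F_{\mu^{\boxplus(1+t)}}(z)\bigr)$ and $v_2=F_{\nu\boxplus\tau^{\boxplus(1+t)}}(z)$ coincide because the map $v\mapsto F_\nu^{-1}(v)+(1+t)\phi_\tau(v)=v\,(1+o(1))$ is injective on a truncated Stolz angle, and then the two $F$-transforms of probability measures, agreeing on an open set, agree on all of $\mf{C}^+$. With that observation made explicit, your proof is complete and arguably more transparent than the paper's citation.
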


\begin{proof}
The assumption says that
\begin{equation}\label{z1} \phi_\tau(F_\nu(z))=z-F_\mu(z), \qquad z\in \mf{C}^+,
\end{equation}
and the desired conclusion is
\begin{equation}\label{z2}\phi_\tau(F_{\nu\boxplus \tau^{\boxplus t}}(z))=z-F_{\mathbb{B}_t(\mu)}(z),  \qquad z\in \mf{C}^+.
\end{equation}
These are related by Theorem 2.8 in \cite{Zhong-Free-BM}.
\end{proof}

In the remaining part of the section we will omit the superscripts on $\mf{M}_t$ and subscripts on the free and boolean powers to avoid excessive notation.  The choice of these is done exactly as in Corollary \ref{Cor:commutation}.

The following result generalizes the main theorem (Theorem 1.1) in \cite{Zhong-Free-BM} from multiplicative free Gaussian to general $\tau$.

\begin{Thm}
Given a pair of probability measures $\nu, \tau \in \mc{ID}^{\sutimes}_\ast$, we have
\[
\mf{M}_t(\tau \msubord \nu) = \tau \msubord (\nu \boxtimes \tau^{\boxtimes t}).
\]
\end{Thm}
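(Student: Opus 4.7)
The plan is to prove this as a direct multiplicative translation of the additive subordination evolution Theorem~\ref{Thm:Additive-evolution}, using $W$ as a semigroup homomorphism. Since $\nu, \tau \in \mc{ID}^{\sutimes}_\ast$, Theorem~\ref{Thm:Wrapping} lets me pick $\mu_\nu, \mu_\tau \in \mc{L}$ with $W(\mu_\nu) = \nu$ and $W(\mu_\tau) = \tau$; by the branch-choice convention preceding the theorem (as in Corollary~\ref{Cor:commutation}), I fix these lifts to lie in $\mc{L}_{[a\tau]}$ and $\mc{L}_{[a\nu]}$ for the prescribed arguments. To make the additive theorem applicable, I would need $\mu_\tau \in \mc{ID}^\boxplus$; this is automatic when $\tau \in \mc{ID}^\boxtimes_\ast$ by Proposition~\ref{Prop:ID-preimage}, and for the general $\mc{ID}^{\sutimes}_\ast$ case the boolean power $\tau^{\boxtimes t}$ must be interpretable, which corresponds precisely to choosing a branch for which $\mu_\tau^{\boxplus t}$ exists in $\mc{L}$.

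The core step is then to apply Theorem~\ref{Thm:Additive-evolution} on the real line, which gives
\[
\mf{B}_t(\mu_\tau \boxright \mu_\nu) = \mu_\tau \boxright (\mu_\nu \boxplus \mu_\tau^{\boxplus t}).
\]
I would then apply $W$ to both sides. On the left, Lemma~\ref{Lemma:L-convolution} (together with its consequence \eqref{B_t-L_n}) guarantees that $\mu_\tau \boxright \mu_\nu \in \mc{L}$ and that $\mf{B}_t$ preserves the relevant $\mc{L}_n$-stratum, so by the definition \eqref{Mt} of $\mf{M}_t$ and Theorem~\ref{Thm:Homomorphism} applied to the subordination operation,
\[
W(\mf{B}_t(\mu_\tau \boxright \mu_\nu)) = \mf{M}_t(W(\mu_\tau \boxright \mu_\nu)) = \mf{M}_t(\tau \msubord \nu).
\]
On the right, Theorem~\ref{Thm:Homomorphism} gives $W(\mu_\tau \boxright (\mu_\nu \boxplus \mu_\tau^{\boxplus t})) = \tau \msubord W(\mu_\nu \boxplus \mu_\tau^{\boxplus t}) = \tau \msubord (\nu \boxtimes W(\mu_\tau^{\boxplus t}))$, and Proposition~\ref{Prop:ID-preimage} identifies $W(\mu_\tau^{\boxplus t}) = \tau^{\boxtimes t}$ (for the distinguished branch coming from our fixed lift).

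The main obstacle, and the place that requires care rather than calculation, is the branch ambiguity of the multiplicative powers and of $\mf{M}_t$: the equation in the statement is only meaningful once a compatible choice of branches is made on both sides. I would handle this exactly as in Corollary~\ref{Cor:commutation}, by fixing the lift $\mu_\tau \in \mc{L}_{[a]}$ with $2\pi a = \arg m_1(\tau)$ (so that $\mu_\tau^{\boxplus t} \in \mc{L}_{[at]}$, matching the chosen branch of $\tau^{\boxtimes t}$), and invoking Lemma~\ref{Lemma:L-convolution} to confirm that $\mu_\tau \boxright \mu_\nu$ and $\mu_\nu \boxplus \mu_\tau^{\boxplus t}$ land in the appropriate strata for the $\mf{M}_t$ on the left and the $\msubord$ on the right to match up after applying $W$. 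Everything else is then routine bookkeeping modulo $\delta_{2\pi}$.
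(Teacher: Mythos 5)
Your proof is correct and follows essentially the same route as the paper: lift $\tau$ and $\nu$ to $\mc{L}$, apply the additive subordination evolution (Theorem~\ref{Thm:Additive-evolution}), and push the identity back down through $W$. The paper carries this out at the level of the transforms, tracking the $2\pi k$ ambiguity explicitly by adjusting the pre-image of $\tau \msubord \nu$, while you invoke the homomorphism properties of Theorem~\ref{Thm:Homomorphism} and Proposition~\ref{Prop:ID-preimage} directly, but the substance is identical.
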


\begin{proof}
Denote $\mu = \tau \msubord \nu$. This means that
\begin{equation}\label{z3}\Sigma_\tau(\eta_\nu(z))=\frac{z}{\eta_\mu(z)}, z\in \mf{D}
\end{equation}

Let $\tau=W(\tilde{\tau})$, $\mu=W(\tilde{\mu})$ and $\nu=W(\tilde{\nu})$. Then evaluating \eqref{z3} at  $e^{iz}$ we get
$$\Sigma_{W(\tilde{\tau})}(\eta_{W(\tilde{\nu})}(e^{iz}))=\frac{e^{iz}}{ \eta_{W(\tilde\mu)}(e^{iz})}.$$
By definition, for all $\sigma$, $\exp(i \varphi_{\sigma}(z)) = \Sigma_{W(\sigma)}(e^{iz})$ and $\eta_{W(\sigma)}(e^{iz})=\exp(i F_\sigma(z))$, Thus
$$\exp(i \varphi_{\tilde{\tau}}(F_\mu(z))=\frac{e^{iz}}{\exp(iF_{\tilde \mu}(z))}$$ which is equivalent to $$\phi_{\tilde\tau}(F_{\tilde\nu}(z))=z-F_{\tilde\mu}(z) + 2 \pi k$$
for some $k$. By adjusting the pre-image of $\mu$, we may assume that $k=0$.
By Theorem~\ref{Thm:Additive-evolution} we get
\[\phi_{\tilde{\tau}}(F_{\tilde{\nu}\boxplus \tilde{\tau}^{\boxplus t}}(z))=z-F_{\mathbb{B}_t(\tilde{\mu})}(z),
\]
which by reverting the steps above implies that \[\Sigma_{W(\tilde{\tau})}(\eta_{W(\tilde{\nu}\boxplus \tilde{\tau}^{\boxplus t})}(e^{iz}))=\frac{e^{iz}}{ \eta_{W(\mathbb{B}_t(\tilde{\mu}))}(e^{iz})}.\]

Finally, since $\tau=W(\tilde{\tau})$,  $W(\tilde{\nu}\boxplus \tilde{\tau}^{\boxplus t})=\nu\boxtimes \tau^{\boxtimes t}$ and $W(\mathbb{B}_t(\tilde{\mu}))=\mathbb{M}_t(\mu)$, we obtain
\begin{equation}\label{z4}\Sigma_\tau(\eta_{\nu\boxtimes \tau^{\boxtimes t} }(z))=\frac{z}{\eta_{\mathbb{M}_t(\mu)}(z)}, z\in \mf{D}.
\end{equation}
The result follows.
\end{proof}

The following are analogs of Proposition~5.3 from \cite{Nica-Subordination}, and follow from the preceding theorem.

\begin{Cor}
Let $\tau \in \mc{ID}^{\sutimes}_\ast$. Then $\tau \msubord \tau = \mf{M}(\tau)$, and more generally, for $s, t \geq 0$,
\[
(\tau^{\boxtimes s}) \msubord (\tau^{\boxtimes t}) = \left( \mf{M}_t(\tau) \right)^{\boxtimes s}.
\]
\end{Cor}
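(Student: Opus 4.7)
My plan is to derive the first identity as a special case of the preceding theorem, and then to establish the general identity by a direct computation with $\Sigma$-transforms. For $\tau \msubord \tau = \mf{M}(\tau)$, I would apply the preceding theorem with $\nu = \delta_1$ (which lies in $\mc{ID}^{\sutimes}_\ast$ since $\eta_{\delta_1}(z) = z$ satisfies the characterization \eqref{Multi-Boolean}). Because $\delta_1$ is the multiplicative identity, $\tau \msubord \delta_1 = \tau$ and $\delta_1 \boxtimes \tau^{\boxtimes t} = \tau^{\boxtimes t}$, so the theorem collapses to
\[
\mf{M}_t(\tau) = \tau \msubord \tau^{\boxtimes t}.
\]
Setting $t = 1$ and recalling $\mf{M} = \mf{M}_1$ yields $\tau \msubord \tau = \mf{M}(\tau)$.

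For the general identity, the idea is to show that $\Sigma_{\tau^{\boxtimes s} \msubord \tau^{\boxtimes t}}$ and $\Sigma_{(\mf{M}_t(\tau))^{\boxtimes s}}$ coincide. Writing $\mu_a = \tau^{\boxtimes a}$ and using $\Sigma_{\mu_a}(z) = \Sigma_\tau(z)^a$, so that $\eta_{\mu_a}^{-1}(w) = w \Sigma_\tau(w)^a$, I would unfold the subordination formula $\eta_{\mu_s \msubord \mu_t} = \eta_{\mu_t}^{-1} \circ \eta_{\mu_{s+t}}$ as follows. For $z$ near the origin, let $v = \eta_{\mu_s \msubord \mu_t}(z)$ and $w = \eta_{\mu_t}(v) = \eta_{\mu_{s+t}}(z)$. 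Then $z = w \Sigma_\tau(w)^{s+t}$ and $v = w \Sigma_\tau(w)^t$, so
\[
\Sigma_{\mu_s \msubord \mu_t}(v) = \frac{z}{v} = \Sigma_\tau(w)^s = \Sigma_\tau(\eta_{\mu_t}(v))^s.
\]
Taking $s = 1$ identifies $\Sigma_{\mf{M}_t(\tau)}(v) = \Sigma_\tau(\eta_{\mu_t}(v))$ (reconfirming the first identity), and raising to the $s$-th power gives $\Sigma_{(\mf{M}_t(\tau))^{\boxtimes s}}(v) = \Sigma_\tau(\eta_{\mu_t}(v))^s$, which matches the display above.

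The main difficulty I anticipate is the multi-valuedness of $\tau^{\boxtimes a}$ and of the $s$-th power of $\Sigma_\tau$, a recurring issue in this section. Once branches are fixed consistently as in Proposition~\ref{Prop:ID-preimage} and the conventions used for $\boxtimes_\phi$ in Corollary~\ref{Cor:commutation}, the computations above are valid on the nose; I expect this bookkeeping to be the only obstacle, and it is routine.
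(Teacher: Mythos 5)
Your proof is correct. The first identity is obtained exactly as the paper intends: the corollary is stated to ``follow from the preceding theorem,'' and specializing that theorem to $\nu = \delta_1$ (which indeed lies in $\mc{ID}^{\sutimes}_\ast$, as $\eta_{\delta_1}(z)=z$) gives $\mf{M}_t(\tau) = \tau \msubord \tau^{\boxtimes t}$ and hence $\mf{M}(\tau)=\tau\msubord\tau$ at $t=1$. For the general identity you depart from the paper's implicit route: rather than wrapping the additive analog $(\widetilde\tau^{\boxplus s})\boxright(\widetilde\tau^{\boxplus t}) = (\mf{B}_t(\widetilde\tau))^{\boxplus s}$ (Proposition~5.3 of \cite{Nica-Subordination}) through $W$ via Theorem~\ref{Thm:Homomorphism} and Proposition~\ref{Prop:ID-preimage}, you compute directly with $\Sigma$-transforms, in effect re-deriving the identity $\Sigma_{\mu\msubord\nu}=\Sigma_\mu\circ\eta_\nu$ (recorded in the paper's closing Remark) together with its compatibility with $\boxtimes$-powers in the left argument. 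Your computation $z = w\Sigma_\tau(w)^{s+t}$, $v = w\Sigma_\tau(w)^t$, $\Sigma_{\mu_s\msubord\mu_t}(v) = z/v = \Sigma_\tau(\eta_{\mu_t}(v))^s$ is valid on a neighborhood of the origin and extends by analytic continuation as in the definition of $\msubord$. What the direct computation buys is independence from the additive literature; what it costs is exactly the branch bookkeeping you flag at the end, which the wrapping approach disposes of automatically because ambiguities on the additive side are absorbed into the $\mod \delta_{2\pi}$ equivalence and the conventions fixed before Corollary~\ref{Cor:commutation}. Either way the argument goes through.
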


By similar methods, we also easily obtain the following analogs of Corollary 4.13 in \cite{Nica-Subordination} and Lemma~7 in \cite{Ans-Free-evolution-2}.

\begin{Prop}
$\mu \msubord \nu \in \mc{ID}^{\boxtimes}$ whenever $\mu \in \mc{ID}^\boxtimes$ or $\nu = \mu \boxtimes \nu'$.
\end{Prop}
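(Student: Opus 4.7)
The plan is to use the wrapping map $W$ to transfer the assertion to its additive counterpart, namely that $\widetilde\mu \boxright \widetilde\nu \in \mc{ID}^\boxplus$ whenever $\widetilde\mu \in \mc{ID}^\boxplus$ or $\widetilde\nu = \widetilde\mu \boxplus \widetilde\nu'$; this is Corollary~4.13 of \cite{Nica-Subordination} and Lemma~7 of \cite{Ans-Free-evolution-2}.

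First I would pick preimages $\widetilde\mu, \widetilde\nu \in \mc{L}$ of $\mu, \nu$ under $W$, which exist by Theorem~\ref{Thm:Wrapping}. By Theorem~\ref{Thm:Homomorphism},
\[
\mu \msubord \nu = W(\widetilde\mu \boxright \widetilde\nu),
\]
and by Lemma~\ref{Lemma:L-convolution} the measure $\widetilde\mu \boxright \widetilde\nu$ belongs to $\mc{L}$. Thus it will suffice to show that it also lies in $\mc{ID}^\boxplus$, for then Proposition~\ref{Prop:ID-preimage} will send $\mc{L} \cap \mc{ID}^\boxplus$ into $\mc{ID}^\boxtimes_\ast$, finishing the proof.

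In the first case, Proposition~\ref{Prop:ID-preimage} forces every lift $\widetilde\mu$ of $\mu \in \mc{ID}^\boxtimes_\ast$ to lie in $\mc{L} \cap \mc{ID}^\boxplus$, so the additive analog applies directly to produce $\widetilde\mu \boxright \widetilde\nu \in \mc{ID}^\boxplus$. In the second case, I would choose a preimage $\widetilde\nu' \in \mc{L}$ of $\nu'$ and take $\widetilde\nu := \widetilde\mu \boxplus \widetilde\nu'$; Theorem~\ref{Thm:Homomorphism} ensures this is a valid lift of $\nu = \mu \boxtimes \nu'$, so the second branch of the additive analog closes the argument. The only subtlety I anticipate is the $\mod \delta_{2\pi}$ ambiguity in lifting through $W$, but it is harmless: shifts by $\delta_{2 \pi k}$ preserve both $\mc{L}$ (Lemma~\ref{Lemma:Modulo}) and $\mc{ID}^\boxplus$, so any consistent choice of preimages will do.
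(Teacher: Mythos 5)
Your proposal is correct and follows exactly the route the paper intends: the paper only remarks that the statement follows ``by similar methods'' as an analog of Corollary~4.13 in \cite{Nica-Subordination} and Lemma~7 in \cite{Ans-Free-evolution-2}, and your write-up fills in precisely that transfer — lift via $W$ using Theorem~\ref{Thm:Wrapping} and Proposition~\ref{Prop:ID-preimage}, apply the additive result, and wrap back using Theorem~\ref{Thm:Homomorphism} and Lemma~\ref{Lemma:L-convolution}. The handling of the $\mod \delta_{2\pi}$ ambiguity is also the right observation and matches the paper's conventions.
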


\begin{Thm}
Let $\mu \in \mc{ID}^{\sutimes}_\ast$, and let $ f:(0,\infty)\times \mathbb{D}\to\mathbb{C}$ be defined by
  \begin{equation}f(t,z)=\frac{\eta_{\mathbb{M}_t(\mu)}(z)}{z},  \quad \forall t>0, \forall z\in\mathbb{D}.
\end{equation}
Then f satisfies the following multiplicative version of the inviscid Burgers' equation
\begin{equation}
\label{Log-evolution}
\frac{\partial f}{\partial t}(t,z)=z\log f(t,z)\frac{\partial f}{\partial z}(t,z),\quad    t>0,z\in\mathbb{D}.
\end{equation}
\end{Thm}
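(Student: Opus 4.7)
The plan is to use the wrapping map $W$ to translate the multiplicative Burgers equation into an additive Burgers equation for the Belinschi-Nica semigroup $\mf{B}_t$ on $\mc{L}$, and then verify the latter directly from the definition $\mf{B}_t(\sigma) = (\sigma^{\boxplus(1+t)})^{\uplus 1/(1+t)}$.

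Given $\mu \in \mc{ID}^{\sutimes}_\ast$, by Theorem~\ref{Thm:Wrapping} choose $\tilde\mu \in \mc{L}_0$ with $W(\tilde\mu) = \mu$; then $\mf{M}_t(\mu) = W(\mf{B}_t(\tilde\mu))$ by \eqref{Mt}. Define $H(t,z) = F_{\mf{B}_t(\tilde\mu)}(z) - z$ on $(0,\infty) \times \mf{C}^+$. A second application of Theorem~\ref{Thm:Wrapping} gives $\eta_{\mf{M}_t(\mu)}(e^{iz}) = e^{iz} \exp(iH(t,z))$, so under the substitution $w = e^{iz}$ parametrizing $\mf{D}\setminus\set{0}$,
\[
f(t,w) = \exp(iH(t,z)), \qquad \log f(t,w) = iH(t,z)
\]
(a single-valued branch being available since $\mf{C}^+$ is simply connected). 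Differentiating in $t$ and in $z$, with the chain rule $\partial_z w = iw$, gives $\partial_t f(t,w) = i H_t(t,z) f(t,w)$ and $iw\,\partial_w f(t,w) = iH_z(t,z)f(t,w)$. Plugging into the right-hand side of \eqref{Log-evolution} shows that on $\mf{D} \setminus \set{0}$ the multiplicative PDE reduces to the additive Burgers equation
\begin{equation}
\label{eq:add-burgers}
\partial_t H(t,z) = H(t,z)\, \partial_z H(t,z).
\end{equation}
By \eqref{Multi-Boolean} the function $\eta_{\mf{M}_t(\mu)}$ vanishes only at the origin, so $f(t,\cdot)$ is analytic and nowhere zero on all of $\mf{D}$, and the identity established on the punctured disk extends to $w = 0$ by continuity.

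To prove \eqref{eq:add-burgers}, write $s = 1+t$ and $G(s,z) = F_{\tilde\mu^{\boxplus s}}(z) - z$, so that $H(t,z) = G(1+t,z)/(1+t)$ by definition of $\mf{B}_t$. Composing $F_{\tilde\mu^{\boxplus s}}^{-1}(z) = z + s\phi_{\tilde\mu}(z)$ with $F_{\tilde\mu^{\boxplus s}}$ yields $\phi_{\tilde\mu}(F_{\tilde\mu^{\boxplus s}}(z)) = -G(s,z)/s$, and differentiating the inverse-function identity $F_{\tilde\mu^{\boxplus s}}(F_{\tilde\mu^{\boxplus s}}^{-1}(z)) = z$ in $s$ gives
\[
\partial_s G(s,z) = -F'_{\tilde\mu^{\boxplus s}}(z)\,\phi_{\tilde\mu}(F_{\tilde\mu^{\boxplus s}}(z)) = \frac{(1 + \partial_z G(s,z))\,G(s,z)}{s}.
\]
Substituting into $\partial_t H = \partial_s G(1+t,z)/(1+t) - G(1+t,z)/(1+t)^2$, the two $G/(1+t)^2$ contributions cancel and one is left with $\partial_t H = G\,\partial_z G/(1+t)^2 = H \cdot H_z$, which is \eqref{eq:add-burgers}.

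The main technical issue is bookkeeping: passing cleanly between the $w \in \mf{D}$ and $z \in \mf{C}^+$ variables via $w = e^{iz}$, selecting the branch of the logarithm consistently, and justifying that the analytic identity obtained on $\mf{D}\setminus\set{0}$ extends to $\mf{D}$. Once these are in place, the additive PDE \eqref{eq:add-burgers} is a short consequence of the Voiculescu-transform scaling $\phi_{\tilde\mu^{\boxplus s}} = s\,\phi_{\tilde\mu}$, so no input beyond Theorem~\ref{Thm:Wrapping} and basic properties of free convolution powers is needed.
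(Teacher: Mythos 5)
Your proof is correct and follows the same route as the paper: wrap via $w=e^{iz}$ to reduce the multiplicative equation to the additive Burgers equation $\partial_t H = H\,\partial_z H$ for $H(t,z)=F_{\mf{B}_t(\tilde\mu)}(z)-z$. The only difference is that the paper simply cites Theorem 1.5 of \cite{Belinschi-Nica-B_t} for that additive equation, whereas you re-derive it from $\phi_{\tilde\mu^{\boxplus s}}=s\phi_{\tilde\mu}$ and the definition of $\mf{B}_t$; your derivation is correct, so the argument is self-contained but not substantively different.
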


\begin{proof}
Again let  $\mu=W(\tilde\mu)$. According to Theorem 1.5 in \cite{Belinschi-Nica-B_t},
\begin{equation}h(t,z)=F_{\mathbb{B}(t)(\mu)}(z)-z,  \quad \forall t>0, \forall z\in\mathbb{D}.
\end{equation}
satisfies
\begin{equation}\frac{\partial h}{\partial t}(t,z)= h(t,z)\frac{\partial h}{\partial z}(t,z),\quad    t>0,z\in\mathbb{D}.
\end{equation}

Since $W(\mathbb{B}_t(\tilde\mu))=\mathbb{M}_t(W(\tilde\mu))=\mathbb{M}_t(\mu)$ then
\[\exp(i(F_{\mathbb{B}(t)(\mu)}(z)-z))=\frac{\eta_{\mathbb{M}_t(\mu)}(e^{iz})}{e^{iz}}
\]
which is means that
\[
h(t,z)=-i\log f(t,e^{iz}) \qedhere
\]
\end{proof}

\begin{Remark}
Let
\[
\eta_{\mf{M}_t(\mu)}(z) = z \exp \int_{\mf{T}} \frac{\zeta + z}{\zeta - z} \,d\mu_t(\zeta),
\]
so that $\mu_t$ is the (reflected) Boolean L{\'e}vy measure of $\mf{M}_t(\mu)$. Then according to Section~4.2 of \cite{Lawler-Conformally-invariant-processes}, the equation \eqref{Log-evolution} is precisely the radial Loewner's equation driven by the family $\set{\mu_t}$.
\end{Remark}

\begin{Remark}
In \cite{Ariz-Hasebe-Subordination}, Arizmendi and Hasebe defined (in our notation)
\[
\mf{B}_\sigma (\nu) = \nu \msubord \sigma, \quad \sigma, \nu \in \mc{P}(\mf{T}),
\]
and showed that they share many properties of $\mf{B}_t$, such as $\mathbb{B}_\sigma(\nu_1\boxtimes\nu_2)=\mathbb{B}_\sigma(\nu_1)\boxtimes\mathbb{B}_\sigma(\nu_2)$ and $\mathbb{B}_{\sigma_1}\circ\mathbb{B}_{\sigma_2}=\mathbb{B}_{\sigma_2\calt\sigma_1}$. They also defined $\mf{A}_\sigma(\mu) = \mu \boxright \sigma$ for $\sigma, \mu \in \mc{P}(\mf{R})$, and proved similar properties for these transformations. Using the techniques above, many properties of $\mf{B}_{W(\sigma)}$ can now be derived from those of $\mf{A}_\sigma$.
\end{Remark}

\subsection{Further results}

\subsubsection{Free convolution powers}
Recall that in general, free convolution powers are defined only for $\nu \in \mc{ID}^{\sutimes}_\ast$. So the following results are well suited for our methods. They were proved in (a) Theorem~3.5 (d) Proposition 5.3 in \cite{Bel-Ber-Partially-mult} (c) Theorem 3.2 of \cite{Zhong-regularity-multiplicative} (e) Theorems 3.8, 3.11 of \cite{Huang-Zhong-supports-multiplicative}. A version of part (b) appears in Proposition 5.3 of \cite{Bel-Ber-Partially-mult}, the formulation below appears to be new.

\begin{Prop}
Let $\nu \in \mc{ID}^{\sutimes}_\ast$ and $t \geq 1$. Denote $\nu_t = \nu^{\boxtimes t}$.
\begin{enumerate}
\item
$\nu_t$ is well defined up to a rotation by $e^{2 \pi i n t}$ ($n \in \mf{Z}$) and is in $\mc{ID}^{\sutimes}_\ast$.
\item
$\zeta$ is an atom of $\nu_t$ if and only if for some $\alpha \in \mf{R}$, $e^{-i t \alpha} = \zeta$ and $e^{-i \alpha}$ is an atom of $\nu$, with $\nu(\set{e^{-i \alpha}}) > 1 - 1/t$, in which case
\[
\nu_t(\set{\zeta}) = t \nu(\set{e^{-i \alpha}} - (t-1).
\]
\item
If $\alpha, \beta$ are atoms of $\nu_t$, then $\nu_t(I) > 0$, where $I$ is an open arc with endpoints $\alpha, \beta$.
\item
$\nu_t$ has no singular continuous part, and the density of its absolutely continuous part is analytic wherever it is positive.
\item
The number of components of $\supp(\nu_t^{ac})$, and of $\supp(\nu_t)$, is a decreasing function of $t$.
\end{enumerate}
\end{Prop}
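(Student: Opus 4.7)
The plan is to reduce each of (a)--(e) to its additive counterpart via the wrapping map. Pick any $\mu \in \mc{L}$ with $W(\mu) = \nu$; by Proposition~\ref{Prop:ID-preimage}, $W(\mu^{\boxplus t}) = \nu^{\boxtimes t}$, and $\mu^{\boxplus t} \in \mc{L}$ by Lemma~\ref{Lemma:L-convolution}. The multi-valued nature of $\nu^{\boxtimes t}$ is exactly accounted for by the ambiguity in choosing a pre-image of $\nu$: replacing $\mu$ by $\mu \boxplus \delta_{2\pi n}$ replaces $\mu^{\boxplus t}$ by $\mu^{\boxplus t} \boxplus \delta_{2 \pi n t}$, which rotates $W(\mu^{\boxplus t})$ by $e^{-2 \pi i n t}$. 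Combined with the Corollary that follows Proposition~\ref{Prop:ID-preimage} (closure of $\mc{ID}^{\sutimes}_\ast$ under $\boxtimes$-powers), this gives (a).

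For (b), the corresponding additive statement (due to Belinschi, generalized by Huang--Bercovici) is: $y$ is an atom of $\mu^{\boxplus t}$ if and only if $y = t x$ where $x$ is an atom of $\mu$ with $\mu(\{x\}) > 1 - 1/t$, and in that case $\mu^{\boxplus t}(\{y\}) = t \mu(\{x\}) - (t-1)$. Transferring this via Proposition~\ref{Prop:Atoms0}: atoms of $\nu = W(\mu)$ correspond bijectively to atoms of $\mu$ in each period (with matching weights), and similarly for $\nu^{\boxtimes t}$ and $\mu^{\boxplus t}$. Writing $x = \alpha + 2 \pi n$ with $e^{-i\alpha}$ an atom of $\nu$, one reads off that $e^{-i t \alpha}$ is an atom of $\nu^{\boxtimes t}$ (up to the rotation freedom in (a)) with the stated weight $t \nu(\{e^{-i\alpha}\}) - (t-1)$.

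Parts (c)--(e) all transfer routinely using Proposition~\ref{Prop:Absolute-continuity}. The additive analog of (c) --- between two atoms of $\mu^{\boxplus t}$ there is a point with strictly positive density --- is a known regularity result; under $W$, the open interval maps onto the open arc connecting the corresponding atoms on $\mf{T}$, so (c) follows. For (d), the additive result (Belinschi--Bercovici) that $\mu^{\boxplus t}$ has no singular continuous part and has analytic density wherever the density is positive is transferred via Proposition~\ref{Prop:Absolute-continuity}(a),(b). For (e), the number of connected components of $\supp(\nu_t^{ac})$ (respectively $\supp(\nu_t)$) is related by Proposition~\ref{Prop:Absolute-continuity}(c) to the number of components of $\supp(\mu_t^{ac}) \cap [0, 2\pi)$ (respectively the analogous statement for $\supp(\mu_t)$), and the monotonicity in $t$ of the latter is the Huang--Zhong theorem.

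The main obstacle, such as it is, is purely bookkeeping: one must keep careful track of the $\mf{Z}$-indexed family of $\mc{L}$-pre-images of $\nu$, verifying that different choices match up with the $e^{2 \pi i n t}$-rotation ambiguity in defining $\nu^{\boxtimes t}$, and that this ambiguity does not affect the intrinsic statements (c)--(e) about singular/absolutely continuous decomposition and number of support components. Everything else is a direct transfer using the preservation properties already established.
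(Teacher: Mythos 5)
Your proposal is correct and follows essentially the same route as the paper: choose a pre-image $\mu \in \mc{L}$ via Proposition~\ref{Prop:ID-preimage} so that $\nu_t = W(\mu^{\boxplus t})$, then transfer each part from its additive counterpart (Belinschi--Bercovici for atoms, regularity, and the singular continuous part; Huang for the supports) using Propositions~\ref{Prop:Atoms0} and~\ref{Prop:Absolute-continuity}, with the rotation ambiguity in (a) accounted for by the $\delta_{2\pi n}$-shift of the pre-image. The paper's proof is just a more compressed version of the same argument, writing out only part (b) explicitly.
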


\begin{proof}
By Proposition~\ref{Prop:ID-preimage}, we may choose $\mu \in \mc{L}$ so that $\nu = W(\mu)$ and $\nu_t = W(\mu^{\boxplus t})$. We will prove part (b). The remaining parts follow by similar methods, from (a) Theorem~2.5 (c) Proposition~3.3 (d) Theorem~3.4 in \cite{Belinschi-Bercovici-Partially-defined} or Theorem~5.1 in \cite{Bel-Ber-Partially-mult} (e) Theorem 3.8 in \cite{Huang-supports-additive}. For part (b), by Theorem 3.1 in \cite{Belinschi-Bercovici-Partially-defined},
\[
\mu^{\boxplus t}(\set{t \alpha}) = \max \left(0, t \mu(\set{\alpha}) - (t-1)\right),
\]
and the result follows from Proposition~\ref{Prop:Atoms0}.
\end{proof}

\begin{Cor}
\label{Cor:L-atoms-2}
Let $\mu \in \mc{L}$ and $\alpha, \beta$ be atoms of $\mu$. Then
\[
\min( \mu(\set{\alpha}), \mu(\set{\beta})) < \frac{2 \pi}{\abs{\alpha - \beta}}.
\]
\end{Cor}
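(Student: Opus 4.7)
The plan is to apply the Belinschi--Bercovici atom formula for free convolution powers after choosing $t \geq 1$ so that $t\alpha$ and $t\beta$ land in the same coset of $2\pi \mathbb{Z}$, and then invoke Corollary~\ref{Cor:L-atoms-1} to reach a constraint. By Lemma~\ref{Lemma:L-convolution} and Proposition~\ref{Prop:ID-preimage}, $\mu^{\boxplus t}$ is defined and lies in $\mc{L}$ for every $t \geq 1$; the additive atom formula (Theorem~3.1 of \cite{Belinschi-Bercovici-Partially-defined}, as used in the preceding proof) asserts that $\mu^{\boxplus t}$ has an atom at $t\alpha$ precisely when $\mu(\{\alpha\}) > 1 - 1/t$, with mass $t\mu(\{\alpha\}) - (t-1)$, and analogously at $t\beta$.

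Set $x = |\alpha - \beta|/(2\pi)$. If $x$ is a positive integer, then $\alpha$ and $\beta$ lie in the same coset of $2\pi \mathbb{Z}$ and Corollary~\ref{Cor:L-atoms-1} already forbids them from both being atoms, so this case is vacuous. Otherwise, take $k = \lceil x \rceil$ and $t = k/x > 1$, so that $t(\alpha - \beta) = \pm 2\pi k$ and $t\alpha, t\beta$ share a coset of $2\pi \mathbb{Z}$. Applying Corollary~\ref{Cor:L-atoms-1} to $\mu^{\boxplus t} \in \mc{L}$ shows that at most one of $t\alpha$, $t\beta$ is an atom of $\mu^{\boxplus t}$; the atom formula then forces
\[
\min\bigl(\mu(\{\alpha\}), \mu(\{\beta\})\bigr) \leq 1 - 1/t = 1 - x/k.
\]

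To finish, it suffices to check the elementary inequality $1 - x/k < 1/x$ on the range $k - 1 < x < k$, which is equivalent to $x(k - x) < k$ and is immediate from $0 < k - x < 1$ and $x < k$. Substituting $1/x = 2\pi/|\alpha - \beta|$ yields the desired strict bound. I do not foresee a significant obstacle; the only subtlety worth flagging is that $\mu^{\boxplus t}$ is only well-defined modulo $\delta_{2\pi}$ for $\mu \in \mc{L}$, but this ambiguity just translates all atom locations by the same multiple of $2\pi$ and therefore has no effect on the coset argument or on the atom masses produced by the formula.
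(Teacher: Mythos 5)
Your proposal is correct and follows essentially the same route as the paper: scale by the same $t$ (your $t=k/x$ is the paper's $t=2\pi(n+1)/\abs{\alpha-\beta}$), apply the Belinschi--Bercovici atom formula to $\mu^{\boxplus t}\in\mc{L}$, and invoke Corollary~\ref{Cor:L-atoms-1}. The only cosmetic difference is that you extract the bound $\min\leq 1-1/t$ and verify $1-1/t<1/x$ directly, whereas the paper argues by contradiction from $\min\geq 1/x>t-1$; your side remark about a $\bmod\ \delta_{2\pi}$ ambiguity is unnecessary since the additive power $\mu^{\boxplus t}$ is single-valued, but it does no harm.
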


\begin{proof}
Let $2 \pi n < \abs{\alpha - \beta} \leq 2 \pi (n+1)$, and let
\[
t = \frac{2 \pi (n+1)}{\abs{\alpha - \beta}}.
\]
Then $1 \leq t < 1 + \frac{2 \pi}{\abs{\alpha - \beta}}$. Suppose both $\mu(\set{\alpha}), \mu(\set{\beta}) \geq \frac{2 \pi}{\abs{\alpha - \beta}} > t - 1$. Then $\mu^{\boxplus t} \in \mc{L}$ has atoms at $t \alpha$ and $t \beta$, with $t \abs{\alpha - \beta} = 2 \pi (n+1)$, contradicting Corollary~\ref{Cor:L-atoms-1}.
\end{proof}

\subsubsection{Limit theorems}

The results in this section are proven only for measures in $\mc{ID}^{\sutimes}_\ast$, and so are typically weaker than known ones. On the other hand, the proofs are much simpler and shorter.

The following proposition is the restriction to $\mc{ID}^{\sutimes}_\ast$ of Theorem~4.3 in \cite{Bercovici-Wang-Multiplicative} and Theorem~3.5, 4.1 in \cite{Wang-Boolean}.

\begin{Prop}
\label{Prop:Limit}
Suppose $\set{\nu_{nk} : n \in \mf{N}, 1 \leq k \leq k_n} \subset \mc{ID}^{\sutimes}_\ast$ form an infinitesimal triangular array. The following are equivalent.
\begin{enumerate}
\item
The sequence $\nu_{n 1} \utimes \nu_{n 2} \utimes \cdots \utimes \nu_{n k_n}$ converges weakly to $\nu_{\sutimes}^{\gamma,\sigma}$.
\item
The sequence $\nu_{n 1} \boxtimes \nu_{n 2} \boxtimes \cdots \boxtimes \nu_{n k_n}$ converges weakly to $\nu_{\boxtimes}^{\gamma,\sigma}$.
\item
The sequence $\nu_{n 1} \circledast \nu_{n 2} \circledast \cdots \circledast \nu_{n k_n}$ converges weakly to $\nu_{\circledast}^{\gamma,\sigma}$.
\end{enumerate}
\end{Prop}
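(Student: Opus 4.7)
The plan is to reduce the three-way equivalence to the classical additive Bercovici-Pata limit theorem on $\mf{R}$ by transporting everything through the wrapping map. First, apply Lemma~\ref{Lemma:Infinitesimal} to obtain an infinitesimal lift $\set{\mu_{nk}} \subset \mc{L}$ with $W(\mu_{nk}) = \nu_{nk}$. Applying Theorem~\ref{Thm:Homomorphism} inductively to partial products, together with the classical wrapping identity $W(\mu \ast \nu) = W(\mu) \circledast W(\nu)$, then yields
\[
W(\mu_{n1} \uplus \cdots \uplus \mu_{n k_n}) = \nu_{n1} \utimes \cdots \utimes \nu_{n k_n},
\]
and analogously for the pairs $(\boxplus, \boxtimes)$ and $(\ast, \circledast)$.

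The classical additive Bercovici-Pata limit theorem for infinitesimal arrays on $\mf{R}$ states that convergence of the corresponding additive partial sums to $\nu_\ast^{\alpha,\tau}$, $\nu_\boxplus^{\alpha,\tau}$, and $\nu_\uplus^{\alpha,\tau}$ respectively, for a common pair $(\alpha,\tau)$, are three equivalent statements. By Proposition~\ref{Prop:BP-intertwiner}, such a pair $(\alpha,\tau)$ is matched uniquely with a multiplicative pair $(\gamma,\sigma)$ via $W(\nu_\ast^{\alpha,\tau}) = \nu_\circledast^{\gamma,\sigma}$, $W(\nu_\boxplus^{\alpha,\tau}) = \nu_\boxtimes^{\gamma,\sigma}$, and $W(\nu_\uplus^{\alpha,\tau}) = \nu_\sutimes^{\gamma,\sigma}$. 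The forward direction (additive convergence $\Rightarrow$ multiplicative convergence) then follows immediately from the weak continuity of $W$ (Proposition~\ref{Prop:Continuity-W}).

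The reverse direction is the main obstacle, because Proposition~\ref{Prop:Continuity-W} only guarantees weak continuity of $W^{-1}$ as a map into $\mc{L} \mod \delta_{2\pi}$, so a priori convergence of the multiplicative partial products lifts to convergence of the additive partial sums only up to shifts by multiples of $\delta_{2\pi}$. The resolution is to exploit the infinitesimality of $\set{\mu_{nk}}$: the Boolean partial sums have $F$-transform $z + \sum_k f_{\mu_{nk}}(e^{iz})$, and since each $\mu_{nk}$ concentrates near $0$, each $f_{\mu_{nk}}$ is uniformly small on compact subsets of $\mf{D}$. This allows a consistent choice of the principal branch of the logarithm, so that convergence of the multiplicative Boolean products $\prod_k e^{i f_{\mu_{nk}}(e^{iz})}$ translates on the nose into convergence of $\sum_k f_{\mu_{nk}}(e^{iz})$, giving weak convergence of the additive Boolean sums to $\nu_\uplus^{\alpha,\tau}$. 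The additive Bercovici-Pata limit theorem on $\mf{R}$ then yields the remaining two additive convergences, which push forward by $W$ to the desired multiplicative convergences. The matching of L\'evy parameters needed to identify $(\alpha,\tau)$ from the hypothesized multiplicative pair $(\gamma,\sigma)$ is already encoded in Proposition~\ref{Prop:BP-intertwiner}, so no further computation is required.
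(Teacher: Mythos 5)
Your overall architecture --- lift the array into $\mc{L}$ via Lemma~\ref{Lemma:Infinitesimal}, transport partial products through $W$ using Theorem~\ref{Thm:Homomorphism}, invoke the additive Bercovici--Pata limit theorem, and push back to the circle --- is exactly the paper's. The gap is in the step you yourself single out as the main obstacle. You claim that because each $f_{\mu_{nk}}$ is uniformly small on compacts (which is true, by infinitesimality), a consistent principal-branch choice makes the additive Boolean sums converge \emph{on the nose} to $\nu_{\uplus}^{\alpha,\tau}$. This does not follow: the relevant quantity is the sum $\sum_{k=1}^{k_n} f_{\mu_{nk}}$, and a sum of $k_n \to \infty$ individually small terms need not be small, so nothing pins its value down modulo $2\pi$. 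Concretely, take $\nu_{nk} = \delta_{e^{-2\pi i/n}}$ for $1 \leq k \leq k_n$, with $k_n = n$ for $n$ even and $k_n = 2n$ for $n$ odd. This is an infinitesimal array in $\mc{ID}^{\sutimes}_\ast$, any infinitesimal lift is (eventually) $\mu_{nk} = \delta_{2\pi/n}$, and the multiplicative Boolean products are all equal to $\delta_1 = \nu_{\sutimes}^{1,0}$; but the additive Boolean sums alternate between $\delta_{2\pi}$ and $\delta_{4\pi}$ and do not converge at all. Here each $f_{\mu_{nk}} \equiv -2\pi/n$ is as small as you like, yet $\sum_k f_{\mu_{nk}} \in \set{-2\pi, -4\pi}$. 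So the additive sums are controlled only modulo $n$-dependent shifts $\delta_{2\pi \ell_n}$, exactly as the ``$\mod \delta_{2\pi}$'' caveat in Proposition~\ref{Prop:Continuity-W} warns, and your branch argument cannot remove them.

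The repair is what the paper does, and it costs essentially nothing: concede that $\mu_{n1} \uplus \cdots \uplus \mu_{n k_n} \uplus \delta_{-2\pi\ell_n} \rightarrow \nu_{\uplus}^{\alpha,\tau}$ for some integers $\ell_n$, apply the additive Bercovici--Pata limit theorem \emph{with the centering constants} $-2\pi\ell_n$ (the theorem is stated for infinitesimal arrays together with real shifts, so this is legitimate) to deduce $\mu_{n1} \boxplus \cdots \boxplus \mu_{n k_n} \boxplus \delta_{-2\pi\ell_n} \rightarrow \nu_{\boxplus}^{\alpha,\tau}$, and then note that applying $W$ annihilates $\delta_{\pm 2\pi\ell_n}$, so the free multiplicative products converge to $\nu_{\boxtimes}^{\gamma,\sigma}$. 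The rest of your write-up --- the identification of $(\alpha,\tau)$ with $(\gamma,\sigma)$ via Proposition~\ref{Prop:BP-intertwiner} and equation \eqref{Sigma-Tau}, and the forward push using weak continuity of $W$ --- is correct as stated.
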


\begin{proof}
Suppose $\nu_{n 1} \putimes \nu_{n 2} \putimes \cdots \putimes \nu_{n k_n}$ converges weakly to $\nu_{\sutimes}^{\gamma,\sigma}$. Using Lemma~\ref{Lemma:Infinitesimal}, we may choose an infinitesimal array $\set{\mu_{nk}} \subset \mc{L}$ so that $\nu_{n k} = W(\mu_{n k})$. Then by Proposition~\ref{Prop:Continuity-W},
\[
(\mu_{n 1} \mod \delta_{2 \pi}) \uplus (\mu_{n 2} \mod \delta_{2 \pi}) \uplus \ldots \uplus (\mu_{n k_n} \mod \delta_{2 \pi}) \rightarrow \nu_\uplus^{\alpha, \tau} \mod \delta_{2 \pi},
\]
where $\tau$ and $\sigma$ are related by equation \eqref{Sigma-Tau}. Therefore
\[
\mu_{n 1} \uplus \mu_{n 2} \uplus \ldots \uplus \mu_{n k_n} \rightarrow \nu_\uplus^{\alpha, \tau} \uplus \delta_{2 \pi \ell_n}
\]
for some integers $\ell_n$. By the additive Bercovici-Pata bijections \cite{BerPatDomains,Chistyakov-Gotze-Limit-I,Bercovici-Wang-Additive,Wang-Boolean}, it then follows that
\[
\mu_{n 1} \boxplus \mu_{n 2} \boxplus \ldots \boxplus \mu_{n k_n} \rightarrow \mu_\boxplus^{\alpha, \tau} \boxplus \delta_{2 \pi \ell_n}.
\]
So applying $W$ again,
\[
\nu_{n 1} \boxtimes \nu_{n 2} \boxtimes \cdots \boxtimes \nu_{n k_n} \rightarrow \nu_{\boxtimes}^{\beta, \sigma}.
\]
The other implications are similar.
\end{proof}

The following proposition is the restriction to $\mc{ID}^{\sutimes}_\ast$ of Proposition~5.6 in \cite{Ans-Williams-Chernoff}.

\begin{Prop}
Suppose $\set{\nu_n}_{n=1}^\infty \subset \mc{ID}^{\sutimes}_\ast$ and the sequence $\underbrace{\nu_{n} \utimes \nu_{n} \utimes \cdots \utimes \nu_{n}}_{k_{n}}$ converges weakly to $\nu_{\sutimes}^{\gamma,\sigma}$. Then for any $\beta \in \mf{R}$ with $\gamma = e^{i \beta}$, there exist $\lambda_n \in \mf{T}$, $\lambda_n^{k_n} = 1$ such that for $\tilde{\nu}_n = \delta_{\lambda_n} \circlearrowright \nu_{n}$,
the sequence $\underbrace{\tilde{\nu}_{n} \circlearrowright \tilde{\nu}_{n} \circlearrowright \cdots \circlearrowright \tilde{\nu}_{n}}_{k_{n}}$ converges weakly to $\nu_{\circlearrowright}^{\beta,\sigma}$.
\end{Prop}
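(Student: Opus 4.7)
The plan is to transfer the problem to the real line via $W^{-1}$, apply Proposition~5.6 of \cite{Ans-Williams-Chernoff} (the additive analog), and then push the conclusion forward again by $W$. First, I would use Lemma~\ref{Lemma:Infinitesimal} to select an infinitesimal array $\set{\mu_n} \subset \mc{L}$ with $W(\mu_n) = \nu_n$. By weak continuity of $W$ (Proposition~\ref{Prop:Continuity-W}) and the Boolean homomorphism property (Theorem~\ref{Thm:Homomorphism}), the hypothesis $\nu_n^{\sutimes k_n} \to \nu_\sutimes^{\gamma, \sigma}$ translates into weak convergence of $\mu_n^{\uplus k_n}$ modulo $\delta_{2 \pi}$ to some $\nu_\uplus^{\alpha, \tau}$, with $(\alpha, \tau)$ corresponding to $(\gamma, \sigma)$ through the intertwining of Proposition~\ref{Prop:BP-intertwiner}. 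By replacing $\mu_n$ with $\mu_n \uplus \delta_{2 \pi m_n}$ for an appropriate integer sequence $m_n$, I can pin down the actual limit to be $\nu_\uplus^{\alpha, \tau}$; and using the relation \eqref{Beta}, I choose $\alpha$ precisely so that the Bercovici--Pata image $W(\nu_\rhd^{\alpha, \tau}) = \nu_\circlearrowright^{\beta, \sigma}$ realises the prescribed $\beta$ (and not merely $\gamma = e^{i \beta}$).

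Next, I would invoke Proposition~5.6 of \cite{Ans-Williams-Chernoff} applied to the array $\set{\mu_n}$: since $\mu_n^{\uplus k_n} \to \nu_\uplus^{\alpha, \tau}$, there exist real shifts $c_n$ such that, with $\widetilde{\mu}_n = \delta_{c_n} \rhd \mu_n$, the iterated monotone convolutions $\widetilde{\mu}_n^{\rhd k_n}$ converge weakly to $\nu_\rhd^{\alpha, \tau}$. Applying $W$ and using Theorem~\ref{Thm:Homomorphism} together with $W(\delta_c) = \delta_{e^{-i c}}$ gives
\[
\bigl(\delta_{\lambda_n} \circlearrowright \nu_n\bigr)^{\circlearrowright k_n} \longrightarrow W(\nu_\rhd^{\alpha, \tau}) = \nu_\circlearrowright^{\beta, \sigma}
\]
weakly, where $\lambda_n = e^{-i c_n}$ and the final equality is by the choice made in Step 1.

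The main obstacle is to enforce the constraint $\lambda_n^{k_n} = 1$, equivalently $k_n c_n \in 2 \pi \mf{Z}$. The shifts $c_n$ coming from the additive proposition satisfy $k_n c_n \to \text{const}$ but are \emph{a priori} arbitrary real numbers. The key observation is that, since the array is infinitesimal and the monotone iteration is Euler-approximation in nature, perturbing $c_n$ by $O(1/k_n)$ does not affect the weak limit of $\widetilde{\mu}_n^{\rhd k_n}$; hence one may snap each $k_n c_n$ to the nearest element of $2 \pi \mf{Z}$ without changing the conclusion. Which multiple of $2 \pi$ is selected corresponds precisely to the integer $m_n$ chosen when lifting $\nu_n$ to $\mc{L}$ in Step~1, and the coordinated choice of $m_n$ and the snapped $c_n$ is what enforces the match with the prescribed $\beta$. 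Verifying rigorously that this simultaneous adjustment of the lift and of $c_n$ is consistent---i.e.\ that the resulting $\lambda_n^{k_n} = 1$ can always be achieved while keeping both limits intact---is the delicate point of the argument.
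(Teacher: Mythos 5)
Your high-level strategy (unwrap to $\mc{L}$, apply the additive limit theorem, rewrap) is the paper's, but the crucial step --- actually producing the roots of unity $\lambda_n$ --- is not correctly executed, and you yourself flag it as unverified. The mechanism in the paper is different from both of the devices you propose. The hypothesis only yields $\mu_n^{\uplus k_n} \uplus \delta_{2\pi j_n} \rightarrow \nu_\uplus^{\alpha,\tau}$ for \emph{some} integers $j_n$, and the drift $2\pi j_n$ is cancelled by distributing it equally among the $k_n$ factors: one replaces $\mu_n$ by $\delta_{2\pi \ell_n/k_n} \uplus \mu_n$, whose wrap is exactly the rotation $\lambda_n = e^{\mp 2\pi i \ell_n/k_n}$ with $\lambda_n^{k_n}=1$. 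In other words, the $\lambda_n$ in the statement \emph{are} these fractional per-factor shifts; they are not an afterthought to be forced onto shifts obtained from elsewhere. One then applies the Boolean-to-monotone transfer theorem (Theorem~1.3 of \cite{Ans-Williams-Chernoff}) directly to the shifted array --- that theorem converts a Boolean limit theorem into a monotone one for the \emph{same} array, so no auxiliary shifts $c_n$ appear at all --- and a further deterministic per-factor adjustment of size $O(1/k_n)$ matches the prescribed representative $\beta$ of $\arg\gamma$.

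Both of your substitute devices fail. First, replacing $\mu_n$ by $\mu_n \uplus \delta_{2\pi m_n}$ changes $\mu_n^{\uplus k_n}$ by $\delta_{2\pi m_n k_n}$, so it can only cancel the drift $2\pi j_n$ when $k_n \mid j_n$; you cannot ``pin down the actual limit'' this way. Second, the snapping argument rests on the assertion that perturbing $c_n$ by $O(1/k_n)$ does not affect the weak limit of the $k_n$-fold convolution. This is false: a per-factor shift of size $\Theta(1/k_n)$ accumulates, after $k_n$ iterations, to a shift of the limit measure of size $\Theta(1)$ (already for Boolean powers, $(\delta_{a/k_n}\uplus\mu_n)^{\uplus k_n} = \delta_a \uplus \mu_n^{\uplus k_n}$), so rounding $k_n c_n$ to the nearest multiple of $2\pi$ generically rotates the limit by a nontrivial amount. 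Only $o(1/k_n)$ perturbations are harmless, and nothing in your argument shows the $c_n$ land within $o(1/k_n)$ of $\tfrac{2\pi}{k_n}\mf{Z}$. The ``delicate point'' you defer is therefore not a technical verification but the actual content of the proposition, and the route you sketch does not close it.
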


\begin{proof}
Let $\nu_n = W(\mu_n)$. Then by Proposition~\ref{Prop:Continuity-W},
\[
(\mu_n \mod \delta_{2 \pi})^{\uplus k_n} \rightarrow \nu_\uplus^{\alpha, \tau} \mod \delta_{2 \pi},
\]
where $(\alpha, \tau)$ and $(\gamma, \sigma)$ are related by equation \eqref{Sigma-Tau}. Therefore
\[
(\delta_{2 \pi \ell_n / k_n} \uplus \mu_n)^{\uplus k_n} \rightarrow \nu_\uplus^{\alpha, \tau}
\]
for some integers $\ell_n$. Denote $W(\nu_{\rhd}^{\alpha, \tau}) = \nu^{\widetilde{\beta}, \sigma}_\circlearrowright$, and let $b_n = (\beta - \widetilde{\beta})/(2 \pi k_n)$. Then
\[
(\delta_{2 \pi (\ell_n + b_n) / k_n} \uplus \mu_n)^{\uplus k_n} \rightarrow \nu_\uplus^{\alpha + (\beta - \widetilde{\beta}), \tau}
\]
By Theorem~1.3 in \cite{Ans-Williams-Chernoff}, it then follows that
\[
(\delta_{2 \pi (\ell_n + b_n) / k_n} \rhd \mu_n)^{\rhd k_n} \rightarrow \nu_{\rhd}^{\alpha  + (\beta - \widetilde{\beta}), \tau}.
\]
So applying $W$ again,
\[
(\delta_{\exp(2 \pi i \ell_n / k_n)} \circlearrowleft \nu_n)^{\circlearrowleft k_n} \rightarrow \nu_{\circlearrowleft}^{\beta, \sigma}.
\]
Finally, let $\lambda_n = \exp(2 \pi i (\ell_n + b_n) / k_n)$.
\end{proof}

The proof of the converse direction in Theorem~5.7 of that paper also follows easily, but was already short to begin with.

\begin{Remark}
It follows from the results in \cite{Goryainov-Hydrodynamic,Bauer-Chordal-Loewner-families,Franz-Hasebe-private} that any $\mu$ with an injective $F$-transform and finite variance arises as a weak limit
\[
\mu = \lim_{n \rightarrow \infty} \mu_{n 1} \rhd \mu_{n 2} \rhd \ldots \rhd \mu_{n k_n}
\]
for an infinitesimal triangular array $\set{\mu_{nk} : n \in \mf{N}, 1 \leq k \leq k_n} \subset \mc{P}(\mf{R})$. In particular, unlike in the setting of Proposition~\ref{Prop:Limit}, the limit need not be $\rhd$-infinitely divisible. On the other hand, it follows from the results in \cite{Rosenblum-Rovnyak,Franz-Hasebe-private}  that any $\nu$ with an injective $\eta$-transform arises as a weak limit
\[
\nu = \lim_{n \rightarrow \infty} \nu_{n 1} \circlearrowright \nu_{n 2} \circlearrowright \ldots \circlearrowright \nu_{n k_n} \rightarrow \nu,
\]for an infinitesimal triangular array $\set{\nu_{nk} : n \in \mf{N}, 1 \leq k \leq k_n} \subset \mc{P}(\mf{T})$. Combining this result with Lemmas~\ref{Lemma:Infinitesimal} and \ref{Lemma:Injective}, it follows that any $\mu \in \mc{L}$ with an injective $F$-transform is a limit of a monotone infinitesimal triangular array. This suggests that the restriction of finite variance above can be removed in general.
\end{Remark}

\begin{Remark}
Consider a weakly continuous family
\[
\set{\mu_t : t \geq 0} \cup \set{\mu_{s,t} : 0 < s < t}
\]
such that $\mu_t \rhd \mu_s = \mu_{t+s}$ (so that they form a monotone convolution semigroup) and $\mu_s \boxplus \mu_{s,t} = \mu_t$. Then in the terminology of \cite{BiaProcesses}, these measures form the distribution of a free additive L{\'e}vy process of the second kind (FAL2), which has freely independent but not stationary increments, and stationary Markov transition functions. Similarly, a weakly continuous family
\[
\set{\nu_t : t \geq 0} \cup \set{\nu_{s,t} : 0 < s < t}
\]
such that $\nu_t \circlearrowright \nu_s = \mu_{t+s}$ and $\nu_s \boxplus \nu_{s,t} = \nu_t$, form the distribution of a free unitary multiplicative L{\'e}vy process of the second kind (FUL2). Biane gave examples of FAL2 processes, and conjectured that non-trivial FUL2 processes do not exist. Wang in \cite{Wang-Monotone-CLT} showed that FAL2 processes with zero mean and finite variance do not exist.
\end{Remark}

\begin{Prop}
FUL2 processes exist if and only if there exist FAL2 processes with distributions $\set{\mu_t} \subset \mc{L}$.
\end{Prop}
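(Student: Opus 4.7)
For the ``if'' direction, assume a FAL2 $\set{\mu_t}\cup\set{\mu_{s,t}}$ is given with $\set{\mu_t}\subset\mc{L}$. I first show that the increments $\mu_{s,t}$ automatically lie in $\mc{L}$. For any $\mu\in\mc{L}$, the relation $F_\mu(z+2\pi)=F_\mu(z)+2\pi$ gives $F_\mu^{-1}(z+2\pi)=F_\mu^{-1}(z)+2\pi$ wherever defined, so the Voiculescu transform $\phi_\mu(z)=F_\mu^{-1}(z)-z$ is $2\pi$-periodic. Hence $\phi_{\mu_{s,t}}=\phi_{\mu_t}-\phi_{\mu_s}$ is $2\pi$-periodic, which translates back to $F_{\mu_{s,t}}\in F_{\mc{L}}$, so $\mu_{s,t}\in\mc{L}$. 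Setting $\nu_t=W(\mu_t)$ and $\nu_{s,t}=W(\mu_{s,t})$, Theorem~\ref{Thm:Homomorphism} converts the relations $\mu_t\rhd\mu_s=\mu_{t+s}$ and $\mu_s\boxplus\mu_{s,t}=\mu_t$ into $\nu_t\circlearrowright\nu_s=\nu_{t+s}$ and $\nu_s\boxtimes\nu_{s,t}=\nu_t$, while weak continuity is preserved by Proposition~\ref{Prop:Continuity-W}. This produces the required FUL2.

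For the ``only if'' direction, let $\set{\nu_t}\cup\set{\nu_{s,t}}$ be a FUL2 (nontrivial; in the trivial case take all $\mu_t=\delta_0$). Since $\set{\nu_t:t\geq 0}$ is a weakly continuous $\circlearrowright$-semigroup with $\nu_0=\delta_1$, each $\nu_t\in\mc{ID}^\circlearrowright_\ast$, and the last statement of Proposition~\ref{Prop:ID-preimage} produces a $\rhd$-semigroup $\set{\mu_t}\subset\mc{L}\cap\mc{ID}^\rhd_\ast$ with $W(\mu_t)=\nu_t$. To build the increments, I first argue that $\nu_{s,t}\in\mc{ID}^{\sutimes}_\ast$: from $\Sigma_{\nu_t}=\Sigma_{\nu_s}\Sigma_{\nu_{s,t}}$ on a neighborhood of $0$, together with $\Sigma_{\nu_s}(0),\Sigma_{\nu_t}(0)\neq 0$, one gets $\eta_{\nu_{s,t}}'(0)\neq 0$, and a zero of $\eta_{\nu_{s,t}}$ away from the origin would produce one of $\eta_{\nu_t}$ through the $\boxtimes$-relation, contradicting $\nu_t\in\mc{ID}^{\sutimes}_\ast$. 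Theorem~\ref{Thm:Wrapping} then yields some $\widetilde{\mu}_{s,t}\in\mc{L}$ with $W(\widetilde{\mu}_{s,t})=\nu_{s,t}$. Applying $W$ to $\mu_s\boxplus\widetilde{\mu}_{s,t}$ gives $\nu_s\boxtimes\nu_{s,t}=\nu_t=W(\mu_t)$, so by the $\mod\delta_{2\pi}$ equivalence one has $\mu_s\boxplus\widetilde{\mu}_{s,t}=\mu_t\uplus\delta_{2\pi k(s,t)}$ for a unique integer $k(s,t)$; setting $\mu_{s,t}:=\widetilde{\mu}_{s,t}\uplus\delta_{-2\pi k(s,t)}$ yields $\mu_s\boxplus\mu_{s,t}=\mu_t$ exactly, and $\mu_{s,t}\in\mc{L}$ by Lemma~\ref{Lemma:L-convolution}.

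The main obstacle is verifying weak continuity of $(s,t)\mapsto\mu_{s,t}$. By Proposition~\ref{Prop:Continuity-W}, $W^{-1}$ is only continuous modulo $\delta_{2\pi}$, so what is needed is that $k(s,t)$ is locally constant on the connected parameter domain $\set{0<s<t}$. Choosing $\widetilde{\mu}_{s,t}$ so that its Carath{\'e}odory representative $f_{\widetilde{\mu}_{s,t}}$ depends analytically on $(s,t)$ on simply connected neighborhoods (possible since $\nu_{s,t}$ is weakly continuous and $W^{-1}$ has analytic local branches), the integer $k(s,t)$ inherits continuity from $\mu_s$, $\mu_t$, and $\widetilde{\mu}_{s,t}$ and is therefore locally, hence globally, constant after a single overall adjustment. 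A secondary subtlety, addressed above, is the verification that $\nu_{s,t}\in\mc{ID}^{\sutimes}_\ast$; this is the analytic step most sensitive to the hypotheses, since in principle one would worry that the ``multiplicative ratio'' $\Sigma_{\nu_t}/\Sigma_{\nu_s}$ could correspond to an $\eta$-transform with unwanted zeros.
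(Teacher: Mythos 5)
Your overall architecture matches the paper's: wrap the given FAL2 using Theorem~\ref{Thm:Homomorphism} (after checking the increments lie in $\mc{L}$, which you do by the same periodicity-of-$\phi$ argument the paper delegates to Lemma~\ref{Lemma:L-convolution}), and unwrap the given FUL2 using Proposition~\ref{Prop:ID-preimage} for the semigroup and a separate argument for the increments. The ``if'' direction is fine. The genuine gap is in the ``only if'' direction, at the step you yourself flag as the most delicate: showing $\nu_{s,t} \in \mc{ID}^{\sutimes}_\ast$. Your argument is that a zero of $\eta_{\nu_{s,t}}$ at some $w_0 \in \mf{D} \setminus \set{0}$ would ``produce one of $\eta_{\nu_t}$ through the $\boxtimes$-relation.'' But the $\boxtimes$-relation at the level of $\eta$-transforms is a subordination relation $\eta_{\nu_t} = \eta_{\nu_{s,t}} \circ \omega$ for an analytic self-map $\omega$ of $\mf{D}$ with $\omega(0)=0$; a zero of the \emph{outer} function $\eta_{\nu_{s,t}}$ at $w_0$ only transfers to $\eta_{\nu_t}$ if $w_0$ lies in the range of $\omega$, which there is no reason to expect. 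The identity $\Sigma_{\nu_t} = \Sigma_{\nu_s}\Sigma_{\nu_{s,t}}$ is purely local at the origin and says nothing about zeros of $\eta_{\nu_{s,t}}$ elsewhere in the disk (it does give $\eta_{\nu_{s,t}}'(0) \neq 0$, via multiplicativity of the first moment, as you say). The paper closes this gap differently: weak continuity of the family lets one write $\nu_{s,t}$ as a $\boxtimes$-product over a fine partition, which is an infinitesimal array, so by the multiplicative limit theorems $\nu_{s,t} \in \mc{ID}^{\boxtimes}_\ast \subset \mc{ID}^{\sutimes}_\ast$ (Lemma~\ref{Lemma:Mult-ID}). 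Some such global input is genuinely needed; the local $\Sigma$-computation cannot supply it.

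On the secondary point: you are right that the weak continuity of $(s,t) \mapsto \mu_{s,t}$ needs an argument (the paper's ``we may choose $\mu_{s,t}$'' is silent on this), but your resolution via analytically varying Carath{\'e}odory representatives is more elaborate than necessary. Once $\nu_{s,t} \in \mc{ID}^{\boxtimes}_\ast$ is known, Proposition~\ref{Prop:ID-preimage} puts every preimage in $\mc{L} \cap \mc{ID}^\boxplus$, and the relation $e^{i\phi_{\widetilde{\mu}_{s,t}}} = \Sigma_{\nu_{s,t}}(e^{iz}) = e^{i(\phi_{\mu_t}-\phi_{\mu_s})}$ shows that exactly one representative in the $\bmod\ \delta_{2\pi}$ class satisfies $\phi_{\mu_{s,t}} = \phi_{\mu_t} - \phi_{\mu_s}$. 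Taking that canonical choice kills the integer ambiguity outright, and weak continuity of $(s,t) \mapsto \mu_{s,t}$ then follows from that of $t \mapsto \mu_t$ via the standard correspondence between weak convergence and convergence of Voiculescu transforms, with no monodromy bookkeeping required.
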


\begin{proof}
Let $\set{\mu_t : t \geq 0} \cup \set{\mu_{s,t} : 0 < s < t}$ be a distribution of a FAL2 process. Then the same argument as in the proof of Lemma~\ref{Lemma:L-convolution} shows that all $\mu_{s,t} \in \mc{L}$ as well. Therefore $\set{W(\mu_t) : t \geq 0} \cup \set{W(\mu_{s,t}) : 0 < s < t}$ is a distribution of a FUL2 process.

Conversely, let $\set{\nu_t : t \geq 0} \cup \set{\nu_{s,t} : 0 < s < t}$ be a distribution of a FUL2 process. Then $\set{\mu_t} \subset \mc{ID}^{\circlearrowright}_\ast \subset \mc{ID}^{\sutimes}_\ast$, and by Proposition~\ref{Prop:ID-preimage}, we may choose a $\rhd$-semigroup $\set{\mu_t : t \geq 0}$ with $W(\mu_t) = \nu_t$. Also, weak continuity of the family implies by a standard argument that all $\mu_{s,t} \in \mc{ID}^{\boxtimes}_\ast \subset \mc{ID}^{\sutimes}_\ast$. Therefore we may choose $\mu_{s,t}$ so that $\mu_s \boxplus \mu_{s,t} = \mu_t$ and $W(\mu_{s,t}) = \nu_{s,t}$. We thus obtain a distribution of a FAL2 process.
\end{proof}

Finally, we note the relation via $W$ between the superconvergence results in additive limit theorems of \cite{Ber-Wang-Zhong-Superconvergence} and their multiplicative counterparts in \cite{Ans-Wang-Zhong}.


\subsubsection{Free convolution}

Again, the known or desired results in this section are for measures in $\mc{P}(\mf{T})$, while our approach only works for measures in $\mc{ID}^{\sutimes}_\ast$. Nevertheless, some of the results are new.

Part (a) of the following proposition was proved in Theorem 3.1 of \cite{Belinschi-Atoms-mult}, part (d) in Theorem 4.1 of \cite{Bercovici-Wang-Indecomposable}. Parts (b,c) appear to be new.

\begin{Prop}
Let $\mu, \nu \in \mc{ID}^{\sutimes}_{\ast}$, neither a point mass. Then
\begin{enumerate}
\item
$\zeta \in \mf{T}$ is an atom of $\mu \boxtimes \nu$ if and only if there exist $\alpha, \beta \in \mf{T}$ such that $\zeta = \alpha \beta$ and $\mu(\set{\alpha}) + \nu(\set{\beta}) > 1$. In this case, $(\mu \boxtimes \nu)(\set{\zeta}) = \mu(\set{\alpha}) + \nu(\set{\beta}) - 1$.
\item
The absolutely continuous part of $\mu \boxtimes \nu$ is non-zero, and its density is analytic whenever it is positive and finite.
\item
The singular continuous part of $\mu \boxtimes \nu$ is zero.
\item
If $\zeta, \xi$ are atoms of $\mu \boxtimes \nu$, then $(\mu \boxtimes \nu)(I) > 0$, where $I$ is an open arc with endpoints $\zeta, \xi$.
\end{enumerate}
\end{Prop}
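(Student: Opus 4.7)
The plan is to reduce each part to the additive free convolution analog on $\mf{R}$ and then transport it through $W$. By Proposition~\ref{Prop:ID-preimage}, I can pick $\tilde\mu, \tilde\nu \in \mc{L}\cap \mc{ID}^\boxplus$ with $W(\tilde\mu)=\mu$ and $W(\tilde\nu)=\nu$, and then Theorem~\ref{Thm:Homomorphism} gives $\tilde\mu\boxplus\tilde\nu \in \mc{L}$ with $W(\tilde\mu\boxplus\tilde\nu)=\mu\boxtimes\nu$. Since $\mu$ is not a point mass, neither is any preimage in $\mc{L}$ (a translate of a point mass wraps to a point mass), and likewise for $\nu$, so the classical additive hypotheses are satisfied.

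For part (a), I will invoke the Bercovici-Voiculescu description of atoms of $\tilde\mu\boxplus\tilde\nu$ (an atom at $x$ of weight $\tilde\mu(\set{a})+\tilde\nu(\set{b})-1$ exactly when $x=a+b$ and this is positive). Using Proposition~\ref{Prop:Atoms0} in both directions, atoms of $\mu, \nu, \mu\boxtimes\nu$ lift uniquely (after fixing representatives) to atoms of $\tilde\mu, \tilde\nu, \tilde\mu\boxplus\tilde\nu$ of the same weight, and the relation $\zeta=\alpha\beta$ corresponds to $x=a+b\mod 2\pi$. For parts (b) and (c), the analogous regularity results for the additive free convolution of two non-atomic measures are standard (Theorem~3.4 and related results from \cite{Belinschi-Bercovici-Partially-defined}, also Theorem~5.1 of \cite{Bel-Ber-Partially-mult}): $\tilde\mu\boxplus\tilde\nu$ has no singular continuous part, its absolutely continuous part is non-zero, and the density is analytic wherever positive and finite. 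I then transfer via Proposition~\ref{Prop:Absolute-continuity}(a), which preserves the Lebesgue decomposition under $W$; analyticity of the density on $\mf{T}$ follows from the relation $\eta_{W(\sigma)}(e^{iz})=e^{iF_\sigma(z)}$, which preserves analytic continuation across boundary arcs.

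For part (d), let $\zeta,\xi$ be two atoms of $\mu\boxtimes\nu$, and use Proposition~\ref{Prop:Atoms0} to pick real representatives $x,y$ with $e^{-ix}=\zeta$, $e^{-iy}=\xi$ that are atoms of $\tilde\mu\boxplus\tilde\nu$ of the same weight. The additive interval-between-atoms theorem (the content of Theorem~4.1 of \cite{Bercovici-Wang-Indecomposable} translated to $\mf{R}$) yields $(\tilde\mu\boxplus\tilde\nu)((x,y))>0$, and pushing this mass forward through $W$ puts positive mass on the arc $\set{e^{-it} : t\in(x,y)}$. To cover the complementary arc, I replace $y$ by $y+2\pi$ (still an atom of $\tilde\mu\boxplus\tilde\nu\in\mc{L}$ of the same weight as $\xi$) and repeat.

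The only real obstacle is bookkeeping: each atom and arc on $\mf{T}$ lifts to a $2\pi\mf{Z}$-orbit on $\mf{R}$, and I must choose representatives consistently so that the additive relations (sum of atoms, interval between atoms) project cleanly to their multiplicative counterparts. Once this lifting is set up, the entire analytic content of parts (a)--(d) is imported from the corresponding additive theorems, so no new complex-analytic estimates are required.
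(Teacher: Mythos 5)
Your overall strategy --- lift $\mu,\nu$ to $\widetilde{\mu},\widetilde{\nu}\in\mc{L}\cap\mc{ID}^\boxplus$ via Proposition~\ref{Prop:ID-preimage}, apply the known additive theorems to $\widetilde{\mu}\boxplus\widetilde{\nu}$, and push the conclusions back through $W$ using Proposition~\ref{Prop:Atoms0} and Proposition~\ref{Prop:Absolute-continuity} --- is exactly the paper's proof, which is stated in one line as ``the results follow by the same methods.'' Parts (a), (b), (c) of your argument are fine, modulo a citation quibble: the additive regularity input for two \emph{general} measures $\widetilde{\mu}\boxplus\widetilde{\nu}$ is Theorem~4.1 of \cite{Belinschi-Lebesgue}, not the results of \cite{Belinschi-Bercovici-Partially-defined} or \cite{Bel-Ber-Partially-mult}, which concern convolution \emph{powers} $\mu^{\boxplus t}$.

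There is, however, a concrete error in your treatment of part (d). You claim that if $y$ is an atom of $\widetilde{\mu}\boxplus\widetilde{\nu}\in\mc{L}$ then so is $y+2\pi$, ``of the same weight,'' and you use this to reach the complementary arc. Measures in $\mc{L}$ are not $2\pi$-periodic --- only their wrappings are --- and by the paper's own Corollary~\ref{Cor:L-atoms-1} (equivalently, the uniqueness statement in Proposition~\ref{Prop:Atoms0}) \emph{at most one} point of the orbit $\set{y+2\pi n}$ can be an atom of a measure in $\mc{L}$; so $y+2\pi$ is definitely \emph{not} an atom, and the additive interval theorem cannot be applied to the pair $(y, x+2\pi)$. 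Thus your argument only produces positive mass on the single arc that is the image of $(x,y)$, and even that conclusion degenerates when the unique lifted representatives satisfy $\abs{x-y}\ge 2\pi$ (which Corollary~\ref{Cor:L-atoms-2} permits when the atoms are small), since then the image of $(x,y)$ is all of $\mf{T}$ and identifies neither arc. To repair this one needs an additional ingredient --- e.g.\ the structural consequence of part (a) that the two atoms share a common factor ($\zeta=\alpha_1\beta$, $\xi=\alpha_2\beta$ with $\nu(\set{\beta})>1/2$, or symmetrically), combined with a more careful use of the additive Theorem~2.5 of \cite{Bercovici-Wang-Indecomposable} --- rather than the periodicity shortcut you propose.
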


\begin{proof}
The results follow by the same methods from (a) \cite{BV96} (b,c) Theorem 4.1 in \cite{Belinschi-Lebesgue} (d) Theorem 2.5 in \cite{Bercovici-Wang-Indecomposable}.
\end{proof}

In \cite{Huang-Wang-Levy}, Huang and Wang proved the following result.

\begin{Thm*}
$\mu \in \mc{P}(\mf{R})$ has the property that for any $\nu \in \mc{P}(\mf{R})$, $\mu \boxplus \nu$ is absolutely continuous with a strictly positive density, if and only if $\mu$ itself is absolutely continuous with a strictly positive density and has an infinite second moment.
\end{Thm*}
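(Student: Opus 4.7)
My strategy is to analyse the subordination function $\omega : \mf{C}^+ \to \mf{C}^+$ determined by $G_{\mu \boxplus \nu}(z) = G_\mu(\omega(z))$, and to connect its boundary behaviour on $\mf{R}$ to the moment structure of $\mu$ through the Nevanlinna representation of $F_\mu$.

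\emph{Necessary direction.} Taking $\nu = \delta_0$ gives $\mu = \mu \boxplus \nu$, which must therefore be AC with strictly positive density. To force infinite second moment, I would argue by contradiction: if $\sigma^2(\mu) < \infty$, then in every Stolz angle at infinity $\phi_\mu(z) = m_1 + \sigma^2/z + O(|z|^{-2})$. Choose $\nu$ to be an explicit test measure, for instance a Bernoulli $(\delta_{-a} + \delta_a)/2$ with $a$ large, for which $\phi_\nu$ has a closed form. Using $\phi_{\mu \boxplus \nu} = \phi_\mu + \phi_\nu$ and Stieltjes inversion, I would locate a real point at which the density of $\mu \boxplus \nu$ vanishes, contradicting the hypothesis.

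\emph{Sufficient direction.} Assume $\mu$ is AC with strictly positive density $f_\mu$ and $\int x^2 \,d\mu = \infty$. For arbitrary $\nu$, the subordination function $\omega$ extends continuously from $\mf{C}^+$ to $\mf{R}$ with values in $\overline{\mf{C}^+}$, and by Stieltjes inversion
\[
\frac{d(\mu \boxplus \nu)^{ac}}{dx}(x) = -\frac{1}{\pi} \Imm G_\mu(\omega(x)).
\]
Since $f_\mu > 0$ on $\mf{R}$, $G_\mu$ extends continuously to $\mf{C}^+ \cup \mf{R}$ with $\Imm G_\mu < 0$ everywhere. So once I know $\omega(\mf{R}) \subset \mf{C}^+$, the density is strictly positive everywhere; combined with the Belinschi regularity theory (used to rule out singular continuous part and atoms, via results already cited in the paper), this gives the conclusion. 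To prove $\omega(\mf{R}) \subset \mf{C}^+$, I would use the fixed-point characterization $\omega(z) = z - K_\nu\bigl(z - K_\mu(\omega(z))\bigr)$ with $K_\rho(z) := z - F_\rho(z)$: the Nevanlinna representation $F_\mu(z) = z - \alpha - \int (1 + xz)/(z-x)\,d\tau(x)$ shows that finite second moment of $\mu$ is equivalent to the Nevanlinna measure $\tau$ having finite total mass, and I would exploit this to rule out real solutions of the fixed-point equation.

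The main obstacle is the sharp implication in the sufficient direction: establishing that the finite/infinite second moment dichotomy for $\mu$ is \emph{exactly} what determines whether $\omega$ can reach $\mf{R}$, uniformly over all $\nu$. This requires delicate estimates on the boundary asymptotics of $K_\mu$ at infinity, read off from the Nevanlinna representation, together with a careful analysis of the subordination fixed-point equation at boundary points — this last step is where I expect to invest most of the technical work.
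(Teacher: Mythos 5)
First, a point of orientation: the paper does not prove this statement at all. It is quoted verbatim from Huang and Wang \cite{Huang-Wang-Levy} and used as a black box, via the wrapping map $W$, to deduce the multiplicative analogue for measures in $\mc{ID}^{\sutimes}_\ast$. So there is no internal proof to compare yours against; your proposal has to stand on its own as a proof of a genuinely substantial external theorem, and as written it does not.

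The central gap is in the sufficient direction. Your strategy is to show $\omega(\mf{R}) \subset \mf{C}^+$ and then conclude positivity of the density from $\Imm G_\mu(\omega(x)) < 0$. But $\omega(\mf{R}) \subset \mf{C}^+$ is false in general: for $\nu = \delta_a$ the subordination function is $\omega(z) = z - a$, which is entirely real, and more generally $\omega$ will have real boundary values for many $\nu$. The actual content of the theorem lies precisely in analyzing what happens at points where $\omega(x) \in \mf{R}$ (or where $\omega$ fails to have a finite boundary value), and in showing that infinite variance of $\mu$ still forces $-\Imm G_\mu(\omega(x+iy)) $ to stay bounded away from $0$ there; you defer exactly this step. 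A second unsupported claim in the same direction is that strict positivity of $f_\mu$ makes $G_\mu$ extend continuously to $\mf{C}^+ \cup \mf{R}$: positivity of a density gives no continuity or boundedness of the density, hence no continuous extension of $G_\mu$, only nontangential limits a.e. In the necessary direction, the step ``locate a real point at which the density of $\mu \boxplus \nu$ vanishes'' for a Bernoulli $\nu$ is asserted rather than argued, and it is not obvious: since $f_\mu > 0$ everywhere, the density of $\mu \boxplus \nu$ can only vanish through degenerate boundary behaviour of $\omega$ or of $F_\mu$, which again requires the delicate analysis you postpone. Finally, a smaller but load-bearing slip: with the kernel $(1+xz)/(z-x)$ the Nevanlinna measure $\tau$ is \emph{always} finite; finite variance of $\mu$ corresponds to $\int (1+x^2)\,d\tau(x) < \infty$ (equivalently to the representation $F_\mu(z) = z - m_1 - G_\rho(z)$ with $\rho(\mf{R}) = \mathrm{Var}(\mu)$), so the dichotomy you intend to exploit is misstated. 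In short, the proposal names the right tools (subordination, Nevanlinna representation, boundary values) but the theorem's difficulty is concentrated exactly in the steps you leave open, and one of your two pivotal claims is false as stated.
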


They also proved a similar result for multiplicative convolution, which however did not require any moment conditions. The proof of the following proposition, a particular case of their result, explains this absence.

\begin{Prop}
$\mu \in \mc{ID}^\boxtimes_\ast$ has the property that for any $\nu \in \mc{ID}^{\sutimes}_\ast$, $\mu \boxtimes \nu$ is absolutely continuous with a strictly positive density, if and only if $\mu$ itself is absolutely continuous with a strictly positive density.
\end{Prop}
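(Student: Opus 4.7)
The forward direction is immediate: the Dirac mass $\delta_1 \in \mc{ID}^{\sutimes}_\ast$ (its $\eta$-transform is $z$, which satisfies \eqref{Multi-Boolean}), and $\mu \boxtimes \delta_1 = \mu$. So if the convolution has absolutely continuous part with a strictly positive density for every $\nu \in \mc{ID}^{\sutimes}_\ast$, the choice $\nu = \delta_1$ gives the conclusion for $\mu$ itself.

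For the backward direction, the plan is to transfer the statement through the wrapping map to the real line and apply the Huang--Wang theorem quoted above. Since $\mu \in \mc{ID}^\boxtimes_\ast$, Proposition~\ref{Prop:ID-preimage} produces a pre-image $\mu' \in \mc{L} \cap \mc{ID}^\boxplus$ with $W(\mu') = \mu$. Given any $\nu \in \mc{ID}^{\sutimes}_\ast$, Theorem~\ref{Thm:Wrapping} similarly yields $\nu' \in \mc{L}$ with $W(\nu') = \nu$. By Theorem~\ref{Thm:Homomorphism}, $W(\mu' \boxplus \nu') = \mu \boxtimes \nu$, and $\mu' \boxplus \nu' \in \mc{L}$ by Lemma~\ref{Lemma:L-convolution}. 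So by Proposition~\ref{Prop:Absolute-continuity}(b), $\mu \boxtimes \nu$ is absolutely continuous with strictly positive density if and only if $\mu' \boxplus \nu'$ is.

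Now I would invoke Huang--Wang on $\mu'$. Two ingredients are needed: (i) $\mu'$ is itself absolutely continuous with strictly positive density, and (ii) $\mu'$ has infinite second moment. Ingredient (i) follows from the hypothesis on $\mu$ together with Proposition~\ref{Prop:Absolute-continuity}(b) applied in the other direction. Ingredient (ii) is \emph{automatic}: since $\mu' \in \mc{L}$ and is not a point mass (otherwise $\mu$ would be a delta on $\mf{T}$, contradicting absolute continuity of $\mu$), the proposition on the Cauchy domain of attraction in Section~\ref{Section:Class} asserts that all moments of $\mu'$ are undefined. This is precisely the conceptual point of the proposition: in the multiplicative setting the moment hypothesis of Huang--Wang is superfluous because the wrapping map already forces its pre-images into the Cauchy domain of attraction. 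Having both ingredients, Huang--Wang gives that $\mu' \boxplus \nu'$ is absolutely continuous with strictly positive density for \emph{every} $\nu' \in \mc{P}(\mf{R})$, in particular for our chosen $\nu'$, and pushing back through $W$ finishes the proof.

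The main obstacle is essentially bookkeeping: verifying that the pre-image $\mu'$ can be taken simultaneously in $\mc{L} \cap \mc{ID}^\boxplus$ (so that the Huang--Wang hypotheses on $\mu'$ transfer correctly from the hypotheses on $\mu$), and checking that absolute continuity with strictly positive density really is preserved in both directions by the wrapping. Both facts are already available as Proposition~\ref{Prop:ID-preimage} and Proposition~\ref{Prop:Absolute-continuity}, so the argument reduces to assembling these ingredients; no further analytic work is required.
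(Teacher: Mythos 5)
Your proposal is correct and follows essentially the same route as the paper: both directions match (taking $\nu=\delta_1$ for the forward implication, and for the converse pulling $\mu$ and $\nu$ back to $\mc{L}$ via Proposition~\ref{Prop:ID-preimage} and Theorem~\ref{Thm:Wrapping}, noting the second-moment hypothesis of Huang--Wang is automatic because non-degenerate measures in $\mc{L}$ have no finite moments, and transferring the conclusion back with Theorem~\ref{Thm:Homomorphism} and Proposition~\ref{Prop:Absolute-continuity}). Your write-up is in fact slightly more explicit than the paper's about which lemma justifies each transfer step.
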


\begin{proof}
One direction is clear by taking $\nu = \delta_1$. Now let $\nu \in \mc{ID}^{\sutimes}_\ast$, so that $\nu = W(\widetilde{\nu})$ for some $\widetilde{\nu} \in \mc{L}$. Let $\mu \in \mc{ID}^\boxtimes_\ast$ be absolutely continuous with a strictly positive density. Then by Lemma~\ref{Lemma:Mult-ID}, $\mu \in \mc{ID}^{\sutimes}_\ast$, so $\mu = W(\widetilde{\mu})$, with $\widetilde{\mu} \in \mc{L}$. In particular, it has infinite second moment. By Proposition~\ref{Prop:Absolute-continuity}, $\widetilde{\mu}$ is absolutely continuous with a strictly positive density. By Proposition~\ref{Prop:ID-preimage}, it is in $\mc{ID}^\boxplus$. Therefore by the results of Huang and Wang for the real line, $\widetilde{\mu} \boxplus \widetilde{\nu}$  is absolutely continuous with a strictly positive density. Applying Proposition~\ref{Prop:ID-preimage} again, the conclusion follows.
\end{proof}

The following is a multiplicative version of Theorem 3.5 from Hasebe \cite{Hasebe-Monotone-ID}, and follows from it by applying $W$. Analogs of several other properties from that paper can be derived similarly.

\begin{Prop}If a $\circlearrowright$-infinitely divisible distribution $\nu$ contains an isolated atom at $\zeta$, then $\nu$ is of
the form $\nu = \nu(\{\zeta\})\delta_a + \nu_{ac}$, where $\nu_{ac}$ is absolutely continuous w.r.t. the Haar measure and $\zeta \notin supp \nu_{ac}$.
\end{Prop}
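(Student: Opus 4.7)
The strategy is to lift $\nu$ to a measure $\mu$ on the real line, apply Hasebe's additive Theorem~3.5, and then push forward via $W$. By Proposition~\ref{Prop:ID-preimage}, since $\nu \in \mc{ID}^\circlearrowright_\ast$, there exists $\mu \in \mc{L} \cap \mc{ID}^\rhd_\ast$ with $W(\mu) = \nu$. By Proposition~\ref{Prop:Atoms0}, there is a unique $x_0$ in a given fundamental domain with $e^{-i x_0} = \zeta$ and $\mu(\set{x_0}) = \nu(\set{\zeta})$, while Corollary~\ref{Cor:L-atoms-1} ensures that no other translate $x_0 + 2 \pi n$ is an atom of $\mu$.

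The first substantive step is to show that $x_0$ is an isolated atom of $\mu$. Since $\zeta$ is isolated in $\supp \nu$, pick an open arc $U \ni \zeta$ so that $\nu(U \setminus \set{\zeta}) = 0$, and shrink $U$ so that $W^{-1}(U)$ decomposes into pairwise disjoint open intervals $U_n$ around the points $x_0 + 2 \pi n$. Using $\nu(U \setminus \set{\zeta}) = \mu(W^{-1}(U \setminus \set{\zeta})) = \sum_n \mu(U_n \setminus \set{x_0 + 2 \pi n})$ and non-negativity of the summands, we conclude $\mu(U_0 \setminus \set{x_0}) = 0$, so $x_0$ is isolated in $\supp \mu$.

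Now apply Hasebe's Theorem~3.5 (the additive version for $\rhd$-infinitely divisible distributions) to $\mu$: we obtain $\mu = \mu(\set{x_0}) \delta_{x_0} + \mu_{ac}$, where $\mu_{ac}$ is absolutely continuous with respect to Lebesgue measure and $x_0 \notin \supp \mu_{ac}$. Applying $W$, which is linear on finite measures, yields
\[
\nu = W(\mu) = \mu(\set{x_0}) \delta_{\zeta} + W(\mu_{ac}) = \nu(\set{\zeta}) \delta_{\zeta} + W(\mu_{ac}).
\]
The wrapping of an absolutely continuous measure on $\mf{R}$ is automatically absolutely continuous with respect to Haar measure on $\mf{T}$ (its density is obtained by the sum $\sum_n \rho(x + 2 \pi n)$, which is in $L^1$).

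The only remaining point, and the main technical obstacle, is to verify $\zeta \notin \supp W(\mu_{ac})$. Here we invoke Proposition~\ref{Prop:Absolute-continuity}(c): the components of $\supp(\mu^{ac})$ are $2 \pi$-periodic. Since $x_0$ has a neighborhood $(x_0 - \eps, x_0 + \eps)$ disjoint from $\supp \mu_{ac}$, the same is true of every translate $(x_0 + 2 \pi n - \eps, x_0 + 2 \pi n + \eps)$, so the wrapped density $\sum_n \rho(x + 2 \pi n)$ vanishes on $(x_0 - \eps, x_0 + \eps)$, which gives $\zeta \notin \supp W(\mu_{ac})$ and completes the proof.
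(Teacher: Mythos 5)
Your proposal is correct and is exactly the paper's (one-line) argument — apply $W$ to Hasebe's additive Theorem~3.5 via the lift from Proposition~\ref{Prop:ID-preimage} — with the details of atom matching, isolation, and support periodicity properly filled in. (A minor remark: your isolation argument already gives $\mu_{ac}(U_n)=0$ for every $n$, so the final step follows directly without needing Proposition~\ref{Prop:Absolute-continuity}(c).)
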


\begin{Remark}
Other identities easily obtained by the same methods include
\[
\mu = \mu^{\boxtimes t} \circlearrowleft \mu^{\sutimes (1-t)}
\]
using $\mu = \mu^{\boxplus t} \rhd \mu^{\uplus (1-t)}$ implicit in \cite{Accardi-Lenczewski-Salapata} for integer $n$ and stated in  \cite{Belinschi-Nica-B_t}  in terms of subordination,
\[
\mu \boxtimes \nu = (\mu \msubord \nu) \utimes (\nu \msubord \mu)
\]
which follows from its additive version in \cite[Proposition~1.11]{Nica-Subordination}, \cite[Theorem~4.1]{Bel-Ber-Subordination} and
\[
\Sigma_{\mu \msubord \nu} = \Sigma_\mu \circ \eta_\nu.
\]
using $\phi_{\mu \boxright \nu} = \phi_\mu \circ F_\nu$.
\end{Remark}






\def\cprime{$'$} \def\cprime{$'$}
\providecommand{\bysame}{\leavevmode\hbox to3em{\hrulefill}\thinspace}
\providecommand{\MR}{\relax\ifhmode\unskip\space\fi MR }
\providecommand{\MRhref}[2]{%
  \href{http://www.ams.org/mathscinet-getitem?mr=#1}{#2}
}
\providecommand{\href}[2]{#2}

\end{document}